\newcommand{\eps}{\varepsilon}
\newcommand{\epsn}{{\varepsilon_n}}
\renewcommand{\d}{\,\mathrm{d}}
\newcommand{\mc}{\mathcal}
\newcommand{\diff}{\!\setminus\!}
\newcommand{\elleps}{{\ell^\eps}}
\newcommand{\ellepsd}{{\dot{\ell}^\eps}}
\newcommand{\ueps}{{u^\eps}}
\newcommand{\uepst}{{u_t^\eps}}
\newcommand{\uepsx}{{u_x^\eps}}
\newcommand{\weps}{{w^\eps}}
\newcommand{\wepsd}{{\dot{w}^\eps}}
\newcommand{\reps}{{r^\eps}}
\newcommand{\ellepsn}{{\ell^\epsn}}
\newcommand{\ellepsnd}{{\dot{\ell}^\epsn}}
\newcommand{\uepsn}{{u^\epsn}}
\newcommand{\uepsnt}{{u_t^\epsn}}
\newcommand{\uepsnx}{{u_x^\epsn}}
\newcommand{\wepsn}{{w^\epsn}}
\newcommand{\wepsnd}{{\dot{w}^\epsn}}
\newcommand{\repsn}{{r^\epsn}}
\def\enne{\mathbb{N}}
\def\zeta{\mathbb{Y}}
\def\erre{\mathbb{R}}
\renewcommand{\to}{\rightarrow}
\numberwithin{equation}{section}
\newtheorem{thm}{Theorem}[section]
\newtheorem{defi}[thm]{Definition}
\newtheorem{prop}[thm]{Proposition}
\newtheorem{lemma}[thm]{Lemma}
\newtheorem{cor}[thm]{Corollary}
\theoremstyle{definition}
\newtheorem{rmk}[thm]{Remark}
\theoremstyle{remark}
\begin{document}
	
	\author{Filippo Riva}
	
	\title[Quasistatic limit via vanishing inertia and viscosity]{On the approximation of quasistatic evolutions for the debonding of a thin film via vanishing inertia and viscosity}
	
	\begin{abstract}
		In this paper we contribute to studying the issue of quasistatic limit in the context of Griffith's theory by investigating a one-dimensional debonding model. It describes the evolution of a thin film partially glued to a rigid substrate and subjected to a vertical loading. Taking friction into account and under suitable assumptions on the toughness of the glue, we prove that, in contrast to what happens in the undamped case, dynamic solutions converge to the quasistatic one when inertia and viscosity go to zero, except for a possible discontinuity at the initial time. We then characterise the size of the jump by means of an asymptotic analysis of the debonding front.
	\end{abstract}

	\maketitle
		
	{\small
		\keywords{\noindent {\bf Keywords:}
			Thin films; Dynamic debonding; Quasistatic debonding; Griffith's criterion; Quasistatic limit; Vanishing inertia and viscosity limit.
		}
		\par
		\subjclass{\noindent {\bf 2010 MSC:}
			35L05, 
			35Q74, 
			35R35, 
			70F40, 
			74K35. 
		}
	}

	\pagenumbering{arabic}
	
\medskip

\tableofcontents

	\section*{Introduction}	
	In most of the models within the theory of linearly elastic fracture mechanics the evolution process is assumed to be quasistatic, namely the body is at equilibrium at every time. This postulate seems to be reasonable since inertial effects can be	neglected if the speed of external loading is very slow with respect to the one of internal oscillations. However, its mathematical proof is really far from being achieved in the general framework, due to the high complexity and diversity of the phenomena under consideration. We can rephrase the problem, commonly referred as quasistatic limit issue, as follows: is it true that quasistatic evolutions can be approximated by dynamic ones when the external loading becomes slower and slower, or equivalently the speed of internal vibrations becomes faster and faster? Nowadays only partial results on the theme are available; we refer for instance to \cite{LazRos} and \cite{Roub} for damage models, to \cite{DMSca} in a case of perfect plasticity and to \cite{LazNarkappa}, \cite{LazNar} for the undamped version of the debonding model we analyse in this work. The issue of quasistatic limit has also been studied in a finite-dimensional setting where, starting from the works \cite{Ago}, \cite{Zan} and with the contribution of \cite{Nar15}, an almost complete understanding on the topic has been reached in \cite{ScilSol}. A common feature appearing both in finite both in infinite dimension is the validation of the quasistatic approximation only in presence of a damping term in the dynamic model. Because of this consideration, in this paper we resume a particular kind of debonding model previously inspected in \cite{LazNar} taking in addition viscosity into account. In \cite{LazNar} the authors proved that, due to lack of viscosity, the resulting limit evolution turns out not to be quasistatic, even in the case of a constant toughness of the glue between the film and the substrate. Thanks to friction, we are instead able to give a positive answer to the quasistatic limit question in the model under examination, covering the case of quite general toughnesses.\par 
	We refer to \cite{BurrKell}, \cite{Fre90}, \cite{Hela}, \cite{Helb}, \cite{Helbook} for an introduction to one-dimensional debonding models from an engineering point of view; a first analysis on the quasistatic limit in these kind of models is instead developed in \cite{DouMarCha08}, \cite{LBDM12}. The rigorous mathematical formulation we will follow throughout the paper has been introduced in \cite{DMLazNar16}, used in \cite{LazNarkappa}, \cite{LazNar}, \cite{LazNarinitiation} for the undamped case, and adopted in \cite{Riv} and \cite{RivNar} for well-posedness results in the damped case.\par 
	The mechanical system we consider describes the debonding of a perfectly flexible and inextensible thin film initially glued to a flat rigid substrate and subjected to a vertical loading $w$ at an endpoint. The deformation of the film takes place in the half plane $\{(x,y)\mid x\ge 0\}$ and at time $t\ge0$ is given by $(x,0)\mapsto(x+h(t,x),u(t,x))$, where the functions $h$ and $u$ are the horizontal and the vertical displacement of the point $(x,0)$, respectively. In the reference configuration the debonded region is $\{(x,0)\mid 0\le x< \ell(t)\}$, where $\ell$ is a nondecreasing function representing the debonding front and satisfying $\ell(0)=\ell_0>0$. See Figure \ref{fig1}.
	\begin{figure}
		\includegraphics[scale=0.7]{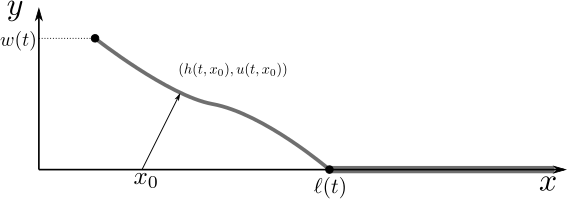}
		\caption{The deformation of the film at time $t$ is represented by the displacement $(x_0,0)\mapsto(x_0+h(t,x_0), u(t,x_0))$. The function $w(t)$ is the vertical loading, while $\ell(t)$ is the debonding front.}\label{fig1}
	\end{figure}
	By linear approximation and inextensibility of the film the horizontal displacement $h$ is uniquely determined by the vertical one $u$, so the only unknowns of the problem are $u$ and the debonding front $\ell$.\par 
	Since our aim is the analysis of the behaviour of the system in the case of slow loading and slow initial velocity we introduce a small parameter $\eps$ in the model, so that the vertical displacement $u_\eps$ (we add the subscript to stress the dependence on $\eps$) solves the dynamic problem:
		\begin{equation}
	\label{problem0}
	\begin{cases}
	 (u_\eps)_{tt}(t,x)-(u_\eps)_{xx}(t,x)+\nu (u_\eps)_t(t,x)=0, \quad& t > 0 \,,\, 0<x<\ell_\eps(t),  \\
	u_\eps(t,0)=w(\eps t), &t>0, \\
	u_\eps(t,\ell_\eps(t))=0,& t>0,\\
	u_\eps(0,x)=u_0(x),\quad&0<x<\ell_0,\\
	(u_\eps)_t(0,x)=\eps u_1(x),&0<x<\ell_0,
	\end{cases}
	\end{equation}
	where $u_0$ and $u_1$ are given initial data, while $\nu\ge 0$ is a parameter which tunes the friction of air. The evolution of the debonding front $\ell_\eps$ will be established later on by suitable energy criteria, in the context of Griffith's theory.\par 
	In the above equation the speed of the waves is one, while the one of the external loading and initial velocity is of order $\eps$. Actually we are interested in studying the limit as the speed of internal vibrations becomes faster and faster, so we need to consider the time-rescaled functions $\Big(\ueps(t,x),\elleps(t)\Big):=\Big(u_\eps\left(\frac t\eps,x\right),\ell_\eps\left(\frac t\eps\right)\Big)$, which solve:
	\begin{equation}
	\label{problem1}
	\begin{cases}
	\eps^2 u^\eps_{tt}(t,x)-u^\eps_{xx}(t,x)+\nu \eps\uepst(t,x)=0, \quad& t > 0 \,,\, 0<x<\elleps(t),  \\
	\ueps(t,0)=w(t), &t>0, \\
	\ueps(t,\elleps(t))=0,& t>0,\\
	\ueps(0,x)=u_0(x),\quad&0<x<\ell_0,\\
	\uepst(0,x)=u_1(x),&0<x<\ell_0,
	\end{cases}
	\end{equation}
	plus Griffith's criterion \eqref{Griffith} ruling the growth of $\elleps$. In this rescaled setting internal waves move with speed $1/\eps$, while the speed of the loading $w$ and of the velocity $u_1$ is of order one. The aim of the work is thus to analyse the limit as $\eps$ goes to $0^+$ of this rescaled pair $(\ueps,\elleps)$.\par 
	The paper is organised as follows. Section \ref{sec1} deals with the dynamic model: we first introduce the energy criteria governing the evolution of the debonding front and we present the concept of (dynamic) Griffith's criterion. Then we collect the known results, proved in \cite{DMLazNar16}, \cite{LazNar}, \cite{Riv} and \cite{RivNar}, on the time-rescaled problem \eqref{problem1} coupled with Griffith's criterion. In particular Theorem~\ref{exuniq} states that there exists a unique dynamic evolution $(\ueps,\elleps)$ for the debonding model.\par 
	In Section \ref{sec2} we instead analyse the notion of quasistatic evolution in our framework; we refer to \cite{MieRou15} for the general topic of quasistatic and rate-independent processes. We then provide an existence and uniqueness result under suitable assumptions, see Theorem~\ref{exuniqquas}.\par 
	The last two Sections are devoted to the study of the limit of the pair $(\ueps,\elleps)$ as $\eps$ goes to $0^+$. In Section \ref{sec3} we exploit the presence of the viscous term in the wave equation to gain uniform bounds and estimates for the vertical displacement $\ueps$ and the debonding front $\elleps$. Finally in Section \ref{sec4} we prove that if $\nu>0$, namely when friction is taken into account, and requiring suitable assumptions on the toughness of the glue, the limit of dynamic evolutions $(\ueps,\elleps)$ exists and it is the quasistatic evolution we previously found in Section \ref{sec2}, except for a possible discontinuity at time $t=0$ produced by an excess of initial potential energy. We conclude the paper by giving a characterisation of this initial jump.
	
	\section*{Notations}
	In this preliminary Section we collect some notations we will use several times throughout the paper. Similar notations have been  introduced in \cite{DMLazNar16}, \cite{RivNar} and also used in \cite{LazNarkappa}, \cite{LazNar}, \cite{Riv}.
	\begin{rmk}
		In the paper every function in the Sobolev space $W^{1,p}(a,b)$, for $-\infty<a<b<+\infty$ and $p\in[1,+\infty]$, is always identified with its continuous representative on $[a,b]$.\par 
		Furthermore the derivative of any function of real variable is denoted by a dot (i.e. $\dot{f}$, $\dot{\ell}$, $\dot{\varphi}$, $\dot{u}_0$), regardless of whether it is a time or a spatial derivative.
	\end{rmk}
	Fix $\ell_0>0$, $\eps>0$ and consider a function $\elleps \colon [0,+\infty)\to [\ell_0,+\infty)$, which will play the role of the (rescaled) debonding front, satisfying:
	\begin{subequations}\label{elle}
		\begin{equation}\label{ellea}
		\elleps\in C^{0,1}([0,+\infty)) ,
		\end{equation}
		\begin{equation}\label{elleb}
		\elleps(0)=\ell_0\mbox{ and } 0\le\ellepsd(t)< 1/\eps \mbox{ for a.e. }t\in (0,+\infty).
		\end{equation}
	\end{subequations}	
	Given such a function, for $t\in[0,+\infty)$ we introduce:
	\begin{equation}\label{phipsidef}
	\varphi^\eps(t):= t {-} \eps\elleps(t) \,\mbox{, }\quad \psi^\eps(t):=t{+}\eps\elleps(t),
	\end{equation} 
	and we define:
	\begin{equation*}
	\omega^\eps\colon [\eps\ell_0,+\infty) \to [-\eps\ell_0,+\infty) , \quad
	\omega^\eps(t):=\varphi^\eps\circ{\psi^\eps}^{-1}(t).
	\end{equation*}
		We notice that $\psi^\eps$ is a bilipschitz function since by \eqref{elleb} it holds $1\le \dot{\psi}^\eps(t)< 2$ for almost every time, while $\varphi^\eps$ turns out to be Lipschitz with $0<\dot\varphi^\eps(t)\le 1$ almost everywhere. Hence $\varphi^\eps$ is invertible and the inverse is absolutely continuous on every compact interval contained in $\varphi^\eps([0,+\infty))$. As a byproduct we get that $\omega^\eps$ is Lipschitz too and for a.e. $t\in(\eps\ell_0,+\infty)$ we have:
		\begin{equation*}
		0<\dot\omega^\eps(t)=\frac{1-\eps\ellepsd({\psi^\eps}^{-1}(t))}{1+\eps\ellepsd({\psi^\eps}^{-1}(t))}\le 1.
		\end{equation*}
		So also $\omega^\eps$ is invertible and the inverse is absolutely continuous on every compact interval contained in $\omega^\eps([0,+\infty))$. Moreover, given $j\in\enne\cup\{0\}$, and denoting by $(\omega^\eps)^j$ the composition of $\omega^\eps$ with itself $j$ times (whether it is well defined) one has:
		\begin{equation}\label{dercomposition}
			\frac{\d}{\d t}(\omega^\eps)^{j+1}(\psi^\eps(t))=\frac{1{-}\eps\ellepsd(t)}{1{+}\eps\ellepsd(t)}\frac{\d}{\d t}(\omega^\eps)^{j}(\varphi^\eps(t)),\text{ for a.e. }t\in({\varphi^\eps}^{-1}((\omega^\eps)^{-j}({-}\eps\ell_0)),+\infty).
		\end{equation}
		It will be useful to define the sets:
		\begin{align*}
		&\Omega^\eps := \{ (t,x)\mid t>0\,,\, 0 < x < \elleps(t)\},\\
		&\Omega^\eps_T :=\{ (t,x)\in\Omega^\eps\mid t<T\}.
		\end{align*}
	For $(t,x)\in\Omega^\eps$ we also introduce: 
	\begin{equation}\label{rettangoli}
		\begin{aligned}
		&R^\eps_+(t,x)=\bigcup\limits_{j=0}^{m}R^\eps_{2j}(t,x),\\
		&R^\eps_-(t,x)=\bigcup\limits_{j=0}^{ n}R^\eps_{2j+1}(t,x),
		\end{aligned}
	\end{equation}
	\begin{figure}
		\includegraphics[scale=0.65]{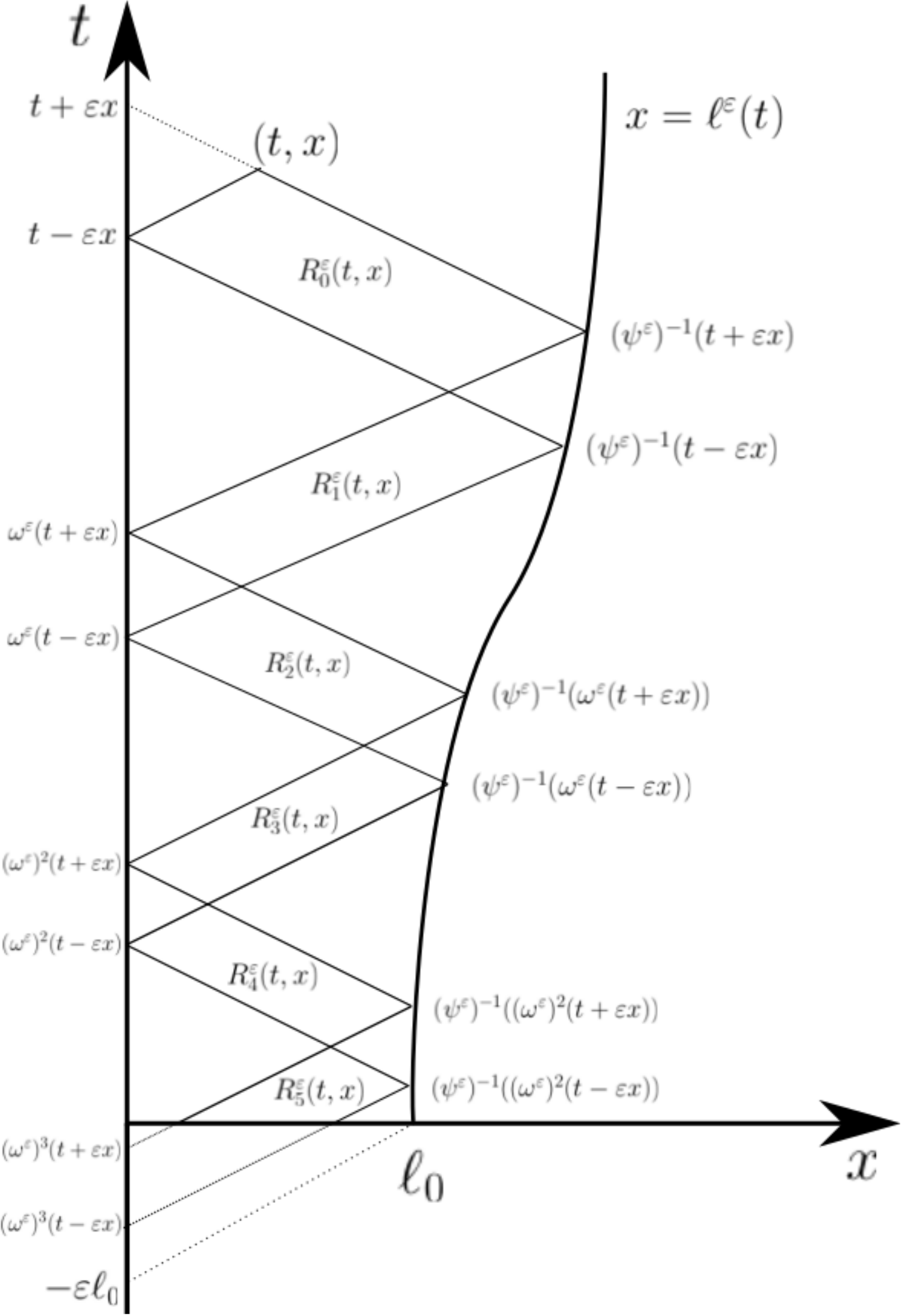}
		\caption{The sets $R^\eps_i(t,x)$ in the particular situation $\eps=1/2$, and with a choice of $(t,x)$ for which $m=2$, $n=2$.}\label{fig2}
	\end{figure}
	In order to avoid the cumbersome definitions of $m=m(\eps,t,x)$, $n=n(\eps,t,x)$ and $R^\eps_i(t,x)$ we refer to the very intuitive Figure \ref{fig2}.\\
	Finally, for $k\in\enne$, let us define the spaces:
	\begin{gather*}
	\widetilde{L}{^2}(\Omega^\eps) := \{u \in L^2_{\textnormal{loc}}(\Omega^\eps)\mid u \in L^2(\Omega^\eps_T) \mbox{ for every } T>0\},\\
	\widetilde{H}{^k}(\Omega^\eps) := \{u \in H^k_{\textnormal{loc}}(\Omega^\eps)\mid u \in H^k(\Omega^\eps_T) \mbox{ for every } T>0\},\\
	\widetilde H^k(0,+\infty) :=\{u \in H^k_{\textnormal{loc}}(0,+\infty) \mid u\in H^k(0,T) \, \text{ for every } T>0\},\\
	\widetilde C^{0,1} ([\ell_0,+\infty)) := \{ u \in C^0([\ell_0,+\infty)) \mid u \in C^{0,1}([\ell_0,X]) \text{ for every } X>\ell_0 \} .
	\end{gather*}
	We say that a family $\mc F$ is bounded in $\widetilde H^k(0,+\infty)$ if for every $T>0$ there exists a positive constant $C_T$ such that $\Vert u\Vert_{H^k(0,T)}\le C_T$ for every $u\in\mc F$. We say that a sequence $\{u_n\}_{n\in\enne}$ converges strongly (weakly) to $u$ in $\widetilde{H}^k(0,+\infty)$ if for every $T>0$ one has $u_n\to u$ ($u_n\rightharpoonup u$) in ${H}^k(0,T)$ as $n\to+\infty$.
	
	\section{Time-rescaled dynamic evolutions}\label{sec1}
	In this Section we give a presentation on the notion of dynamic evolutions for the considered debonding model, gathering all the known results about its well-posedness, see Theorems~\ref{exuniq}, \ref{contdependence} and Remark~\ref{regularity}. We refer to \cite{DMLazNar16}, \cite{LazNar}, \cite{Riv} and \cite{RivNar} for more details.\par 
	We fix $\nu\ge 0$, $\ell_0>0$ and we actually consider a slight generalisation of the rescaled problem \eqref{problem1}:
		\begin{equation}
	\label{problem2}
	\begin{cases}
	\eps^2 u^\eps_{tt}(t,x)-u^\eps_{xx}(t,x)+\nu \eps\uepst(t,x)=0, \quad& t > 0 \,,\, 0<x<\elleps(t),  \\
	\ueps(t,0)=\weps(t), &t>0, \\
	\ueps(t,\elleps(t))=0,& t>0,\\
	\ueps(0,x)=u^\eps_0(x),\quad&0<x<\ell_0,\\
	\uepst(0,x)=u^\eps_1(x),&0<x<\ell_0,
	\end{cases}
	\end{equation}
	in which also loading term and initial data depend on the (small) parameter $\eps>0$. We require they satisfy the following regularity assumptions:
	 \begin{subequations}\label{bdryregularity}
	 	\begin{equation}	
	 	\weps \in \widetilde  H^1(0,+\infty),	
	 	\end{equation}
	 	\begin{equation}
	 	u^\eps_0 \in H^1(0,\ell_0) , \quad u^\eps_1 \in L^2(0,\ell_0) .
	 	\end{equation}
	 \end{subequations}	
	and they fulfill the compatibility conditions:
	\begin{equation}\label{compatibility0}
	u^\eps_0(0)=\weps(0), \quad u^\eps_0(\ell_0)=0.
	\end{equation}
	\begin{rmk}
		By solution of problem \eqref{problem2} we mean an $\widetilde{H}^1(\Omega^\eps)$ function which solves the (damped) wave equation in the sense of distributions in $\Omega^\eps$ and attains the boundary values $\weps$ and $u^\eps_0$ in the sense of traces, while the initial velocity $u^\eps_1$ in the sense of $H^{-1}(0,\ell_0)$. 
	\end{rmk}
	To state the rules governing the evolution of the rescaled debonding front $\elleps$ we consider the following rescaled energies, defined for $t\in[0,+\infty)$:
	\begin{subequations}
		\begin{equation}\label{kinpot}
			\mc E^\eps(t)=\frac 12\int_{0}^{\elleps(t)}\left(\eps^2\uepst(t,\sigma)^2+\uepsx(t,\sigma)^2\right)\d \sigma;
		\end{equation}
		\begin{equation}\label{frict}
			\mc A^\eps(t)=\nu\int_{0}^{t}\int_{0}^{\elleps(\tau)}\eps\uepst(\tau,\sigma)^2\d\sigma\d\tau;
		\end{equation}
		\begin{equation}
			\mc W^\eps(t)=\int_{0}^{t}\wepsd(\tau)\uepsx(\tau,0)\d\tau.
		\end{equation}	
	\end{subequations}
	They represent the sum of kinetic and potential energy, the energy dissipated by friction and the work of the external loading, respectively. We postulate that our model is governed by an energy-dissipation balance and a maximum dissipation principle, namely the pair $(\ueps,\elleps)$ has to satisfy:
	\begin{subequations}\label{energy}
		\begin{equation}\label{eb}
			\mc E^\eps(t)+\mc A^\eps(t)+\int_{\ell_0}^{\elleps(t)}\kappa(\sigma)\d \sigma+\mc W^\eps(t)=\mc E^\eps(0),\quad\text{ for every }t\in[0,+\infty),
		\end{equation}
		where $\kappa\colon[\ell_0,+\infty)\to(0,+\infty)$ is a measurable function representing the toughness of the glue, and:
		\begin{equation}
			\ellepsd(t)=\max\left\{\alpha\in[0,1/\eps)\mid \kappa(\elleps(t))\alpha=G^\eps_{\eps\alpha}(t)\alpha \right\},\quad\text{ for a.e. }t\in(0,+\infty),
		\end{equation}
	\end{subequations}
	where $G^\eps_{\eps\alpha}$ is the (rescaled) dynamic energy release rate at speed $\eps\alpha\in[0,1)$. Formally it can be seen as the opposite of the derivative of the total energy $\mc E^\eps+\mc A^\eps+\mc W^\eps$ with respect to the elongation of $\elleps$ and it measures the amount of energy spent by the debonding process. We refer to \cite{DMLazNar16}, \cite{Fre90}, \cite{LazNar}, \cite{RivNar} and \cite{Slep} for its rigorous definition and properties.\\	
	As proved in \cite{RivNar} the two principles \eqref{energy} are equivalent to dynamic Griffith's criterion:
	\begin{equation}\label{Griffith}
		\begin{cases}
		0\le\ellepsd(t)<1/\eps,\\
		G^\eps_{\eps\ellepsd(t)}(t)\le\kappa(\elleps(t)),\\
		\left[G^\eps_{\eps\ellepsd(t)}(t)-\kappa(\elleps(t))\right]\ellepsd(t)=0,
		\end{cases}\quad\quad\text{ for a.e. }t\in(0,+\infty).
	\end{equation}
	The first row is an irreversibility condition, which ensures that the debonding front can only increase; the second one is a stability condition, and says that the dynamic energy release rate cannot exceed the threshold given by the toughness; the third one is simply the energy-dissipation balance \eqref{eb}.
	\begin{rmk}
		We recall that Griffith's criterion \eqref{Griffith} is also equivalent to an ordinary differential equation for $\elleps$:
		\begin{equation}\label{explicitder}
			\ellepsd(t)=\frac 1\eps \max\left\{\frac{G^\eps_0(t)-\kappa(\elleps(t))}{G^\eps_0(t)+\kappa(\elleps(t))} ,0\right\},\quad\text{ for a.e. }t\in(0,+\infty).
		\end{equation}
		See \cite{DMLazNar16} and \cite{RivNar} for more details. We will not make use of this formula in this work, but it can be helpful for further analysis and researches on the topic.
	\end{rmk}
	Before presenting the known results about the coupled problem \eqref{problem2}\&\eqref{Griffith} we introduce a function which will be useful in a representation formula for the solution of \eqref{problem2}. Given a function $\Theta\in \widetilde{L}^2(\Omega^\eps)$ we define:
    \begin{equation}\label{H}
    	H^\eps[\Theta](t,x):=\frac 12\left[\iint_{R^\eps_+(t,x)}\Theta(\tau,\sigma)\d\sigma\d\tau-\iint_{R^\eps_-(t,x)}\Theta(\tau,\sigma)\d\sigma\d\tau\right], \text{ for }(t,x)\in\Omega^\eps,
    \end{equation} 
    where $R^\eps_\pm(t,x)$ are as in \eqref{rettangoli}. Here are listed the main properties of $H^\eps$, under the assumption that $\elleps$ satisfies \eqref{elle}:
    \begin{prop}\label{Hprop}
    	Let $\Theta\in \widetilde{L}^2(\Omega^\eps)$, then the function $H^\eps[\Theta]$ introduced in \eqref{H} is continuous on $\overline{\Omega^\eps}$ and belongs to $\widetilde{H}^1(\Omega^\eps)$. Moreover, setting $H^\eps[\Theta]\equiv 0$ outside $\overline{\Omega^\eps}$, it belongs to $C^0([0,+\infty);H^1(0,+\infty))$ and to $ C^1([0,+\infty);L^2(0,+\infty))$.\\
    	Furthermore for a.e. $t\in(0,+\infty)$ one has:
    	\begin{equation}\label{Hxzero}
    	\begin{aligned}
    	H^\eps[\Theta]_x(t,0)=&\quad\sum\limits_{j=0}^{m^\eps-1}\frac{\d}{\d t}(\omega^\eps)^j(t)\int_{{\psi^\eps}^{-1}((\omega^\eps)^j(t))}^{(\omega^\eps)^j(t)}\Theta\left(\tau,\frac{(\omega^\eps)^j(t)-\tau}{\eps}\right)\d\tau\\
    	&-\sum\limits_{j=0}^{m^\eps-1}\frac{\d}{\d t}(\omega^\eps)^{j+1}(t)\int_{(\omega^\eps)^{j+1}(t)}^{{\psi^\eps}^{-1}((\omega^\eps)^j(t))}\Theta\left(\tau,\frac{\tau-(\omega^\eps)^{j+1}(t)}{\eps}\right)\d\tau\\
    	&+I^\eps_1(t),
    	\end{aligned}
    	\end{equation}
    	where $m^\eps\!=\!m^\eps(t)$ is the only natural number (including $0$) such that $(\omega^\eps)^{m^\eps}(t)\in[0,(\omega^\eps)^{{-}1}(0))$, while $I^\eps_1$ is defined as follows:
    	\begin{equation*}
    		I^\eps_1(t)=
    		\frac{\d}{\d t}(\omega^\eps)^{m^\eps}(t)\int_{0}^{(\omega^\eps)^{m^\eps}(t)}\Theta\left(\tau,\frac{(\omega^\eps)^{m^\eps}(t)-\tau}{\eps}\right)\d\tau,\quad\quad\text{if }(\omega^\eps)^{m^\eps}(t)\in[0,\eps\ell_0),
    	\end{equation*}
    	while if $(\omega^\eps)^{m^\eps}(t)\in[\eps\ell_0,(\omega^\eps)^{{-}1}(0))$ it is defined in this other way: 
    	\begin{equation*}
    		I^\eps_1(t)=\frac{\d}{\d t}(\omega^\eps)^{m^\eps}(t)\!\!\!\!\!\!\!\!\!\!\!\!\!\!\int\limits_{{\psi^\eps}^{-1}((\omega^\eps)^{m^\eps}(t))}^{(\omega^\eps)^{m^\eps}(t)}\!\!\!\!\!\!\!\!\!\!\!\!\!\!\!\Theta\left(\tau,\frac{(\omega^\eps)^{m^\eps}(t)-\tau}{\eps}\right)\d\tau-\frac{\d}{\d t}(\omega^\eps)^{m^\eps+1}(t)\!\!\!\!\!\!\!\!\!\!\!\!\!\int\limits_{0}^{{\psi^\eps}^{-1}((\omega^\eps)^{m^\eps}(t))}\!\!\!\!\!\!\!\!\!\!\!\!\!\!\!\!\Theta\left(\tau,\frac{\tau-(\omega^\eps)^{m^\eps+1}(t)}{\eps}\right)\d\tau.
    	\end{equation*}
    	Finally for a.e. $t\in(0,+\infty)$ it holds:
    	\begin{equation}\label{Hxellet}
    		H^\eps[\Theta]_x(t,\elleps(t))=\frac{2}{1+\eps\ellepsd(t)}g^\eps[\Theta](t-\eps\elleps(t)),
    	\end{equation}
    	where for a.e. $s\in\varphi^\eps([0,+\infty))$ we define:
    	\begin{equation}\label{g}
    		\begin{aligned}
    		g^\eps[\Theta](s)=&\quad\frac 12\sum\limits_{j=0}^{n^\eps-1}\frac{\d}{\d s}(\omega^\eps)^j(s)\int_{{\psi^\eps}^{-1}((\omega^\eps)^j(s))}^{(\omega^\eps)^j(s)}\Theta\left(\tau,\frac{(\omega^\eps)^j(s)-\tau}{\eps}\right)\d\tau\\
    		&-\frac 12\sum\limits_{j=0}^{n^\eps-1}\frac{\d}{\d s}(\omega^\eps)^{j}(s)\int_{(\omega^\eps)^{j}(s)}^{{\psi^\eps}^{-1}((\omega^\eps)^{j-1}(s))}\Theta\left(\tau,\frac{\tau-(\omega^\eps)^{j}(s)}{\eps}\right)\d\tau\\
    		&+\frac 12\frac{\d}{\d s}(\omega^\eps)^{n^\eps}(s)I^\eps_2(s),
    		\end{aligned}
    	\end{equation}
    	where $n^\eps=n^\eps(s)$ is the only natural number (including $0$) such that $(\omega^\eps)^{n^\eps}(s)\in[-\eps\ell_0,\eps\ell_0)$, while
    	\begin{equation*}
    	I^\eps_2(s)=\begin{cases}\displaystyle
    	-\int_{0}^{{\psi^\eps}^{-1}((\omega^\eps)^{n^\eps-1}(s))}\Theta\left(\tau,\frac{\tau-(\omega^\eps)^{n^\eps}(s)}{\eps}\right)\d\tau,&\!\!\!\!\!\!\!\!\!\!\!\!\!\!\!\!\!\!\!\!\!\!\!\!\!\!\!\!\!\!\!\!\!\text{if }(\omega^\eps)^{n^\eps}(s)\in[-\eps\ell_0,0),\\
    	\displaystyle\int_{0}^{(\omega^\eps)^{n^\eps}(s)}\!\!\!\!\!\!\Theta\left(\tau,\frac{(\omega^\eps)^{n^\eps}(s)-\tau}{\eps}\right)\d\tau-\!\!\!\!\!\!\!\!\!\int\limits_{(\omega^\eps)^{n^\eps}(s)}^{{\psi^\eps}^{-1}((\omega^\eps)^{n^\eps-1}(s))}\!\!\!\!\!\!\!\!\!\!\!\!\!\!\!\!\Theta\left(\tau,\frac{\tau-(\omega^\eps)^{n^\eps}(s)}{\eps}\right)\d\tau,&\text{ otherwise}.
    	\end{cases}
    	\end{equation*}
    \end{prop}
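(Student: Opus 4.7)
The plan is to prove the three assertions of the proposition in sequence: first the basic regularity of $H^\eps[\Theta]$ on $\Omega^\eps$, then its behaviour after the zero extension, and finally the two trace formulas \eqref{Hxzero} and \eqref{Hxellet}. The natural unifying tool throughout is to switch to the characteristic coordinates $(\tau,\sigma)\mapsto(\tau{-}\eps\sigma,\tau{+}\eps\sigma)$, in which each rectangle $R^\eps_i(t,x)$ becomes an axis-aligned rectangle whose corners are described by the maps $\varphi^\eps,\psi^\eps$ and by iterates of the reflection map $\omega^\eps$. Under this change of variables the Jacobian is $(2\eps)^{-1}$, and the double integrals in \eqref{H} reduce to iterated integrals whose endpoints depend on $(t,x)$ only through Lipschitz compositions of $\varphi^\eps,\psi^\eps,\omega^\eps$.

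For the first statement (continuity of $H^\eps[\Theta]$ on $\overline{\Omega^\eps}$ and membership in $\widetilde{H}^1(\Omega^\eps)$), I would argue by density: for $\Theta$ continuous the continuity of $H^\eps[\Theta]$ on $\overline{\Omega^\eps}$ is immediate from the continuity of the endpoints together with the fact that, on each $\Omega^\eps_T$, only finitely many $R^\eps_i(t,x)$ contribute. A Cauchy–Schwarz bound on each rectangle gives a linear bound of $\|H^\eps[\Theta]\|_{L^\infty(\Omega^\eps_T)}$ by $\|\Theta\|_{L^2(\Omega^\eps_T)}$, so $H^\eps$ extends continuously to $\widetilde{L}^2(\Omega^\eps)$. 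Differentiating the iterated integrals in $(t,x)$ produces boundary terms that are one-dimensional integrals of $\Theta$ along the characteristic sides of the $R^\eps_i(t,x)$; using \eqref{dercomposition} and Fubini, these are seen to be square-integrable on each $\Omega^\eps_T$, which yields $\widetilde{H}^1$-regularity.

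For the second statement (after setting $H^\eps[\Theta]\equiv 0$ outside $\overline{\Omega^\eps}$), the key fact is that $H^\eps[\Theta](t,\elleps(t))=0$ by construction, since the rectangles $R^\eps_\pm(t,x)$ collapse as $x\to\elleps(t)^-$. This ensures that the zero extension is continuous on $[0,+\infty)\times[0,+\infty)$ and, for each $t$, lies in $H^1(0,+\infty)$ with support in $[0,\elleps(t)]$; continuity in time with $H^1$-values and $C^1$-regularity with $L^2$-values are then obtained by differentiating the iterated integrals and bounding the resulting expressions using \eqref{dercomposition} and \eqref{elleb}.

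The main obstacle, and the bulk of the work, is establishing the trace formulas \eqref{Hxzero} and \eqref{Hxellet}. The strategy is to decompose $H^\eps[\Theta](t,x)$ as a telescoping sum indexed by the successive reflections of the characteristics between $x=0$ and $x=\elleps(\cdot)$, where each term corresponds to the contribution of one rectangle $R^\eps_i(t,x)$ encoded by a consecutive iterate of $\omega^\eps$. Differentiating this decomposition in $x$ and then passing to the limit $x\to 0^+$ (respectively $x\to\elleps(t)^-$) produces boundary contributions from the horizontal sides of the rectangles; the alternating signs between $R^\eps_{2j}$ and $R^\eps_{2j+1}$ combined with \eqref{dercomposition} make most of these terms telescope and recombine into the two sums appearing in \eqref{Hxzero} and in \eqref{g}. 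The residual term from the last rectangle, whose structure depends on whether $(\omega^\eps)^{m^\eps}(t)$ (respectively $(\omega^\eps)^{n^\eps}(s)$) has already crossed the initial-data slab $[0,\eps\ell_0)$ (respectively $[-\eps\ell_0,0)$), yields the two cases in the definitions of $I^\eps_1$ and $I^\eps_2$; the geometric book-keeping in Figure~\ref{fig2} is indispensable here. Finally, the prefactor $2/(1+\eps\ellepsd(t))$ in \eqref{Hxellet} emerges as the derivative of ${\psi^\eps}^{-1}$ composed with the identification $s=\varphi^\eps(t)$, while the half-factor in \eqref{g} is absorbed from the $1/2$ in \eqref{H} by the characteristic Jacobian.
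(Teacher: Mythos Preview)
Your overall strategy coincides with the paper's: the regularity is obtained exactly as you describe (the paper defers to Lemma~1.11 in \cite{RivNar}, which is the density/Cauchy--Schwarz argument you sketch), and both trace formulas are obtained by writing each $R^\eps_i(t,x)$ in characteristic coordinates and differentiating the resulting iterated integrals, with Figure~\ref{fig2} doing the geometric book-keeping.

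There is one imprecision worth flagging. Your account of how the prefactor $\dfrac{2}{1+\eps\ellepsd(t)}$ in \eqref{Hxellet} arises is not correct: it is \emph{not} the derivative of ${\psi^\eps}^{-1}$ (that would only give $\dfrac{1}{1+\eps\ellepsd(t)}$), nor does the missing factor $2$ come from the characteristic Jacobian. In the paper's computation, differentiating $H^\eps[\Theta]$ in $x$ at the moving boundary produces, for each $j$, a coefficient
\[
\frac{\d}{\d t}(\omega^\eps)^j\big(t-\eps\elleps(t)\big)\;+\;\frac{\d}{\d t}(\omega^\eps)^{j+1}\big(t+\eps\elleps(t)\big),
\]
one contribution from each of the two characteristics through $(t,\elleps(t))$. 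The crucial step is to apply \eqref{dercomposition}, which gives
$\frac{\d}{\d t}(\omega^\eps)^{j+1}(\psi^\eps(t))=\frac{1-\eps\ellepsd(t)}{1+\eps\ellepsd(t)}\,\frac{\d}{\d t}(\omega^\eps)^{j}(\varphi^\eps(t))$,
so that the sum above equals $\dfrac{2}{1+\eps\ellepsd(t)}\,\dfrac{\d}{\d t}(\omega^\eps)^{j}(\varphi^\eps(t))$, and the latter derivative is precisely $\dfrac{\d}{\d s}(\omega^\eps)^j(s)$ at $s=t-\eps\elleps(t)$ appearing in \eqref{g}. There is no telescoping here; the telescoping you mention is the mechanism of the \emph{next} lemma (identity \eqref{magic}), not of this proposition. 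With this correction your plan matches the paper's proof.
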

\begin{proof}
	The regularity of $H^\eps[\Theta]$ can be proved in the same way of Lemma~1.11 in \cite{RivNar}, so we refer to it for the details. The validity of \eqref{Hxzero} is a straightforward matter of computations, see Figure \ref{fig2} for an intuition and also Remark~1.12 in \cite{RivNar}. To get \eqref{Hxellet}, always referring to Figure \ref{fig2} and to \cite{RivNar}, we compute:
	\begingroup\allowdisplaybreaks
	\begin{align*}
		&\quad H^\eps[\Theta]_x(t,\elleps(t))\\
		=&\quad\frac 12\sum\limits_{j=0}^{n^\eps-1}\left(\frac{\d}{\d t}(\omega^\eps)^j(t{-}\eps\elleps(t))+\frac{\d}{\d t}(\omega^\eps)^{j+1}(t{+}\eps\elleps(t))\right)\!\!\!\!\!\!\!\!\int\limits_{{\psi^\eps}^{-1}((\omega^\eps)^j(t-\eps\elleps(t)))}^{(\omega^\eps)^j(t-\eps\elleps(t))}\!\!\!\!\!\!\!\!\!\!\!\!\!\!\!\!\!\!\!\!\!\Theta\left(\tau,\frac{(\omega^\eps)^j(t{-}\eps\elleps(t)){-}\tau}{\eps}\right)\d\tau\\
		&-\frac 12\sum\limits_{j=0}^{n^\eps-1}\left(\frac{\d}{\d t}(\omega^\eps)^j(t{-}\eps\elleps(t))+\frac{\d}{\d t}(\omega^\eps)^{j+1}(t{+}\eps\elleps(t))\right)\!\!\!\!\!\!\!\!\!\!\!\int\limits_{(\omega^\eps)^{j}(t{-}\eps\elleps(t))}^{{\psi^\eps}^{-1}((\omega^\eps)^{j-1}(t{-}\eps\elleps(t)))}\!\!\!\!\!\!\!\!\!\!\!\!\!\!\!\!\!\!\!\!\!\Theta\left(\tau,\frac{\tau-(\omega^\eps)^{j}(t{-}\eps\elleps(t))}{\eps}\right)\d\tau\\
		&+\frac 12\left(\frac{\d}{\d t}(\omega^\eps)^{n^\eps}(t{-}\eps\elleps(t))+\frac{\d}{\d t}(\omega^\eps)^{n^\eps+1}(t{+}\eps\elleps(t))\right)I^\eps_2(t{-}\eps\elleps(t)),
	\end{align*}
	\endgroup
	and we conclude by using \eqref{dercomposition}.
\end{proof}
	\begin{lemma}
		Let $\Theta\in \widetilde{L}^2(\Omega^\eps)$ and consider $H^\eps[\Theta]$ and $g^\eps[\Theta]$ given by \eqref{H} and \eqref{g}, respectively. Then for a.e. $s\in\varphi^\eps([0,+\infty))\cap(0,+\infty)$ it holds:
		\begin{equation}\label{magic}
			g^\eps[\Theta](s)-\frac 12H^\eps[\Theta]_x(s,0)=-\frac 12\int_{s}^{{\varphi^\eps}^{-1}(s)}\Theta\left(\tau,\frac{\tau-s}{\eps}\right)\d\tau.
		\end{equation}
	\end{lemma}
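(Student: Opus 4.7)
The plan is to deduce \eqref{magic} by direct comparison of the explicit formulas \eqref{Hxzero} and \eqref{g}, after isolating the $j=0$ contribution of the second sum in \eqref{g} and showing that everything else cancels against $\frac 12 H^\eps[\Theta]_x(s,0)$.

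\emph{Step 1: recognising the right-hand side.} In the second sum of \eqref{g} the term $j=0$ involves $\frac{\d}{\d s}(\omega^\eps)^0(s)=1$ and runs from $(\omega^\eps)^0(s)=s$ to ${\psi^\eps}^{-1}((\omega^\eps)^{-1}(s))$. Since $\omega^\eps=\varphi^\eps\circ{\psi^\eps}^{-1}$ we have $(\omega^\eps)^{-1}=\psi^\eps\circ{\varphi^\eps}^{-1}$, and hence ${\psi^\eps}^{-1}((\omega^\eps)^{-1}(s))={\varphi^\eps}^{-1}(s)$. The $j=0$ term is therefore exactly $-\frac 12\int_s^{{\varphi^\eps}^{-1}(s)}\Theta(\tau,(\tau-s)/\eps)\d\tau$, which is the right-hand side of \eqref{magic}. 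The problem thus reduces to showing that $g^\eps[\Theta](s)$ minus this term equals $\frac 12 H^\eps[\Theta]_x(s,0)$.

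\emph{Step 2: matching the remaining sums.} I would then re-index the second sum of \eqref{g} via $k=j-1$; after the $j=0$ term has been peeled off, it shares its summand with the second sum of \eqref{Hxzero} evaluated at $t=s$, while the first sums of \eqref{g} and \eqref{Hxzero} already share their summand. The only mismatch lies in the upper indices $n^\eps=n^\eps(s)$ and $m^\eps=m^\eps(s)$ and in the boundary terms $I_1^\eps(s)$, $I_2^\eps(s)$. Comparing the stopping regions $[-\eps\ell_0,\eps\ell_0)$ and $[0,(\omega^\eps)^{-1}(0))$ shows that their overlap is exactly $[0,\eps\ell_0)$, so there are only two alternatives, which correspond precisely to the two piecewise definitions of $I_1^\eps$ and $I_2^\eps$: either $(\omega^\eps)^{m^\eps}(s)\in[0,\eps\ell_0)$ and $n^\eps=m^\eps$, or $(\omega^\eps)^{m^\eps}(s)\in[\eps\ell_0,(\omega^\eps)^{-1}(0))$ and $n^\eps=m^\eps+1$.

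\emph{Step 3: cancellation in each case.} In the first alternative the two first sums coincide and the two second sums differ by one extra term coming from $\frac 12 H^\eps[\Theta]_x$; this leftover combines with $\frac 12\frac{\d}{\d s}(\omega^\eps)^{m^\eps}(s)I_2^\eps(s)$ and $-\frac 12 I_1^\eps(s)$, both evaluated via their second branch, to produce zero. In the second alternative it is the first sums that differ by one extra term, now contributed by $g^\eps$, the second sums coincide, and the analogous combination with the first branches of $I_1^\eps$ and $I_2^\eps$ again vanishes. Each verification boils down to a telescoping of three or four integrals over adjacent subintervals, and the identity \eqref{dercomposition} is what guarantees that the relevant derivatives of $(\omega^\eps)^j$ factor out correctly. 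The main obstacle is bookkeeping rather than conceptual difficulty: one has to carefully align the piecewise definitions of $I_1^\eps$ and $I_2^\eps$ with the two possible locations of $(\omega^\eps)^{m^\eps}(s)$, and Figure~\ref{fig2} is an essential visual aid, since every surviving term corresponds to an $x$-derivative contribution of one of the rectangles there depicted.
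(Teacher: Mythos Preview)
Your approach is essentially the same as the paper's: both compute $2g^\eps[\Theta](s)-H^\eps[\Theta]_x(s,0)$, split into the two cases $n^\eps=m^\eps$ and $n^\eps=m^\eps+1$, and exploit the telescopic structure of the second sums so that only the $j=0$ contribution $-\int_s^{{\varphi^\eps}^{-1}(s)}\Theta(\tau,(\tau-s)/\eps)\d\tau$ survives. The paper lets the telescope produce this term at the end, while you peel it off at the start and then match the rest against $\frac12 H^\eps[\Theta]_x(s,0)$; these are merely two orderings of the same cancellation.

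Two small corrections to your bookkeeping. First, in your Step~3 the branch assignments are swapped: in the first alternative $(\omega^\eps)^{m^\eps}(s)\in[0,\eps\ell_0)$ one uses the \emph{first} branch of $I_1^\eps$ and the \emph{second} (``otherwise'') branch of $I_2^\eps$; in the second alternative $(\omega^\eps)^{m^\eps}(s)\in[\eps\ell_0,(\omega^\eps)^{-1}(0))$ it is the reverse. Second, identity \eqref{dercomposition} is not needed here: the derivatives $\frac{\d}{\d s}(\omega^\eps)^j(s)$ appear with matching indices on both sides and cancel directly, as in the paper's computation. Neither point affects the validity of your argument.
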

\begin{proof}
	We start computing by means of \eqref{Hxzero} and \eqref{g}:
		\begingroup\allowdisplaybreaks
	\begin{align*}
		&\quad 2 g^\eps[\Theta](s)-H^\eps[\Theta]_x(s,0)\\
		=&\quad\sum\limits_{j=0}^{n^\eps-1}\frac{\d}{\d s}(\omega^\eps)^j(s)\int_{{\psi^\eps}^{-1}((\omega^\eps)^j(s))}^{(\omega^\eps)^j(s)}\Theta\left(\tau,\frac{(\omega^\eps)^j(s)-\tau}{\eps}\right)\d\tau\\
		&-\sum\limits_{j=0}^{n^\eps-1}\frac{\d}{\d s}(\omega^\eps)^{j}(s)\int_{(\omega^\eps)^{j}(s)}^{{\psi^\eps}^{-1}((\omega^\eps)^{j-1}(s))}\Theta\left(\tau,\frac{\tau-(\omega^\eps)^{j}(s)}{\eps}\right)\d\tau\\
		&-\sum\limits_{j=0}^{m^\eps-1}\frac{\d}{\d s}(\omega^\eps)^j(s)\int_{{\psi^\eps}^{-1}((\omega^\eps)^j(s))}^{(\omega^\eps)^j(s)}\Theta\left(\tau,\frac{(\omega^\eps)^j(s)-\tau}{\eps}\right)\d\tau\\
		&+\sum\limits_{j=0}^{m^\eps-1}\frac{\d}{\d s}(\omega^\eps)^{j+1}(s)\int_{(\omega^\eps)^{j+1}(s)}^{{\psi^\eps}^{-1}((\omega^\eps)^j(s))}\Theta\left(\tau,\frac{\tau-(\omega^\eps)^{j+1}(s)}{\eps}\right)\d\tau\\
		&+\frac{\d}{\d s}(\omega^\eps)^{n^\eps}(s)I^\eps_2(s)-I_1^\eps(s)=(\star).
	\end{align*}
	\endgroup
	There are only two cases to consider: $n^\eps(s)=m^\eps(s)$ or $n^\eps(s)=m^\eps(s)+1$. We prove the Lemma for the first case, being the other one analogous. So we have:
		\begingroup\allowdisplaybreaks
	\begin{equation}\label{telescopic}
		\begin{aligned}
		(\star)=&\quad\sum\limits_{j=0}^{n^\eps-1}\frac{\d}{\d s}(\omega^\eps)^{j+1}(s)\int_{(\omega^\eps)^{j+1}(s)}^{{\psi^\eps}^{-1}((\omega^\eps)^j(s))}\Theta\left(\tau,\frac{\tau-(\omega^\eps)^{j+1}(s)}{\eps}\right)\d\tau\\
		&-\sum\limits_{j=0}^{n^\eps-1}\frac{\d}{\d s}(\omega^\eps)^{j}(s)\int_{(\omega^\eps)^{j}(s)}^{{\psi^\eps}^{-1}((\omega^\eps)^{j-1}(s))}\Theta\left(\tau,\frac{\tau-(\omega^\eps)^{j}(s)}{\eps}\right)\d\tau\\
		&+\frac{\d}{\d s}(\omega^\eps)^{n^\eps}(s)I^\eps_2(s)-I_1^\eps(s)=(\star\star).
		\end{aligned}
	\end{equation}
	\endgroup
	Exploiting the fact that in \eqref{telescopic} there is now a telescopic sum and by using the explicit formulas of $I_1^\eps$ and $I_2^\eps$ given by Proposition~\ref{Hprop} we hence deduce:
	\begingroup\allowdisplaybreaks
	\begin{align*}
		(\star\star)=&\quad\frac{\d}{\d s}(\omega^\eps)^{n^\eps}(s)\int_{(\omega^\eps)^{n^\eps}(s)}^{{\psi^\eps}^{-1}((\omega^\eps)^{n^\eps-1}(s))}\Theta\left(\tau,\frac{\tau-(\omega^\eps)^{n^\eps}(s)}{\eps}\right)\d\tau-\int_{s}^{{\varphi^\eps}^{-1}(s)}\Theta\left(\tau,\frac{\tau-s}{\eps}\right)\d\tau\\
		&+\frac{\d}{\d s}(\omega^\eps)^{n^\eps}(s)\int_{0}^{(\omega^\eps)^{n^\eps}(s)}\Theta\left(\tau,\frac{(\omega^\eps)^{n^\eps}(s)-\tau}{\eps}\right)\d\tau\\
		&-\frac{\d}{\d s}(\omega^\eps)^{n^\eps}(s)\int_{(\omega^\eps)^{n^\eps}(s)}^{{\psi^\eps}^{-1}((\omega^\eps)^{n^\eps-1}(s))}\Theta\left(\tau,\frac{\tau-(\omega^\eps)^{n^\eps}(s)}{\eps}\right)\d\tau\\
		&-\frac{\d}{\d s}(\omega^\eps)^{n^\eps}(s)\int_{0}^{(\omega^\eps)^{n^\eps}(s)}\Theta\left(\tau,\frac{(\omega^\eps)^{n^\eps}(s)-\tau}{\eps}\right)\d\tau\\
		=&-\int_{s}^{{\varphi^\eps}^{-1}(s)}\Theta\left(\tau,\frac{\tau-s}{\eps}\right)\d\tau,
	\end{align*}
	\endgroup
	and we conclude.
\end{proof}
Finally we are in a position to state the main results about dynamic evolutions of the debonding model, namely solutions to coupled problem \eqref{problem2}\&\eqref{Griffith}. These two Theorems are obtained by collecting what the authors proved in \cite{DMLazNar16}, \cite{LazNar}, \cite{Riv} and \cite{RivNar}.
	\begin{thm}[\textbf{Existence and Uniqueness}]\label{exuniq}
		Fix $\nu\ge 0$, $\ell_0>0$, $\eps>0$, assume the functions $\weps$, $u_0^\eps$ and $u_1^\eps$ satisfy \eqref{bdryregularity}, \eqref{compatibility0} and let the toughness $\kappa$ be positive and satisfy the following property:
		\begin{equation*}
			\text{for every }x\in[\ell_0,+\infty)\text{ there exists }\delta=\delta(x)>0\text{ such that }\kappa\in C^{0,1}([x,x+\delta]).
		\end{equation*}
		Then there exists a unique pair $(\ueps,\elleps)$, with:
		\begin{itemize}
			\item $\elleps\in C^{0,1}([0,+\infty))$, $\elleps(0)=\ell_0$ and $0\le\ellepsd(t)<1/\eps$ for a.e. $t\in(0,+\infty)$,
			\item $\ueps\in \widetilde{H}^1(\Omega^\eps)$ and $\ueps(t,x)=0$ for every $(t,x)$ such that $x>\elleps(t)$,
		\end{itemize} 
		solution of the coupled problem \eqref{problem2}\&\eqref{Griffith}.\par 
		Moreover $\ueps$ has a continuous representative which fulfills the following representation formula:
		\begin{equation*}
			\ueps(t,x)=\begin{cases}
			\displaystyle\weps(t+\eps x)-\frac 1\eps f^\eps(t+\eps x)+\frac 1\eps f^\eps(t-\eps x)-\nu H^\eps[\uepst](t,x),& \text{if }(t,x)\in\overline{\Omega^\eps},\\
			0,&\text{otherwise},
			\end{cases}
		\end{equation*}
		where $f^\eps\in\widetilde{H}^1(-\eps\ell_0,+\infty)$ is defined by two rules:
		\begin{itemize}
			\item[(i)] $\displaystyle f^\eps(s)=\begin{cases}
			\displaystyle\eps\weps(s)-\frac \eps 2 u_0^\eps\left(\frac s\eps\right)-\frac{\eps^2}{2} \int_{0}^{s/\eps}u_1^\eps(\sigma)\d\sigma-\eps\weps(0)+\frac\eps 2 u_0^\eps(0),&\text{if }s\in(0,\eps\ell_0],\\
			\displaystyle\frac \eps 2 u_0^\eps\left(-\frac s\eps\right)-\frac{\eps^2}{2} \int_{0}^{-s/\eps}u_1^\eps(\sigma)\d\sigma-\frac\eps 2 u_0^\eps(0),&\text{if }s\in(-\eps\ell_0,0],
			\end{cases}$
			\item[(ii)] $\displaystyle\weps(s+\eps\elleps(s))-\frac 1\eps f^\eps(s+\eps\elleps(s))+\frac 1\eps f^\eps(s-\eps\elleps(s))=0,\quad\quad\quad$ for every $s\in(0,+\infty)$,
		\end{itemize}
		while $H^\eps$ is as in \eqref{H}.\par \noindent
		In particular it holds:
		\begin{equation*}
		\ueps\in C^0([0,+\infty);H^1(0,+\infty))\cap C^1([0,+\infty);L^2(0,+\infty)).
		\end{equation*}
		Furthermore one has:
		\begin{subequations}
			\begin{equation}\label{expux}
			\uepsx(t,0)=\eps\wepsd(t)-2\dot{f}^\eps(t)-\nu H^\eps[\uepst]_x(t,0),\quad\text{ for a.e. }t\in(0,+\infty),
			\end{equation}
			\begin{equation}
			\uepsx(t,\elleps(t))=-\frac{2}{1+\eps\ellepsd(t)}\Big[\dot{f}^\eps(t-\eps\elleps(t))+\nu g^\eps[\uepst](t-\eps\elleps(t))\Big],\quad\text{ for a.e. }t\in(0,+\infty),
			\end{equation}
		\end{subequations}
		and for $\alpha\in[0,1/\eps)$ the dynamic energy release rate can be expressed as:
		\begin{equation}\label{expgrif}
		G^\eps_{\eps\alpha}(t)=2\frac{1-\eps\alpha}{1+\eps\alpha}\Big[\dot{f}^\eps(t-\eps\elleps(t))+\nu g^\eps[\uepst](t-\eps\elleps(t))\Big]^2,\quad\text{ for a.e. }t\in(0,+\infty),
		\end{equation}
		where $g^\eps$ has been introduced in \eqref{g}.
	\end{thm}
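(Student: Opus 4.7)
Following the d'Alembert approach developed in \cite{DMLazNar16, RivNar}, the plan is to reduce the coupled problem \eqref{problem2}\&\eqref{Griffith} to a scalar function $f^\eps$ on $(-\eps\ell_0,+\infty)$ together with a Cauchy problem for $\elleps$. For fixed $\elleps$ satisfying \eqref{elle}, the damped wave equation $\eps^2u^\eps_{tt}-u^\eps_{xx}+\nu\eps\uepst=0$ is factored as $(\eps\de_t-\de_x)(\eps\de_t+\de_x)\ueps=-\nu\eps\uepst$ and integrated along characteristics; the multiple-reflection rectangles $R^\eps_\pm(t,x)$ of Figure~\ref{fig2} arise naturally from successive bouncing between the boundaries $x=0$ and $x=\elleps(t)$. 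This produces the representation formula in which the travelling-wave component is captured by $f^\eps$ and the Duhamel correction by $\nu H^\eps[\uepst]$ (cf.\ \eqref{H}). The claimed $C^0(H^1)\cap C^1(L^2)$ regularity of $\ueps$ is then a direct consequence of Proposition~\ref{Hprop} applied to $\Theta=\uepst$ together with the regularity of $\weps$ and of $f^\eps$.

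\textbf{Determination of $f^\eps$ and boundary fluxes.} Rule~(i) is obtained by imposing the initial conditions at $t=0$, where $H^\eps[\uepst](0,\cdot)\equiv 0$: matching $\ueps(0,\cdot)=u_0^\eps$ and $\uepst(0,\cdot)=u_1^\eps$ gives a $2\times 2$ linear system in $f^\eps(\pm\eps x)$ whose explicit solution is the formula in the statement, the compatibility conditions \eqref{compatibility0} guaranteeing continuity at $s=0$. Rule~(ii) extends $f^\eps$ beyond $\eps\ell_0$ through the Dirichlet datum $\ueps(s,\elleps(s))=0$: since by \eqref{elleb} the map $\psi^\eps(s)=s+\eps\elleps(s)$ is bilipschitz and $\varphi^\eps(s)=s-\eps\elleps(s)<\psi^\eps(s)$, the identity in~(ii) defines $f^\eps$ recursively on the characteristic intervals produced by iterating $\omega^\eps$, yielding $f^\eps\in\widetilde H^1(-\eps\ell_0,+\infty)$. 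The boundary fluxes \eqref{expux} and the expression for $\uepsx(t,\elleps(t))$ follow from differentiating the representation formula in $x$ and applying \eqref{Hxzero}, \eqref{Hxellet}; formula \eqref{expgrif} is then obtained by combining these fluxes with the one-dimensional expression of the dynamic energy release rate (see \cite{DMLazNar16, RivNar}) and with the identity \eqref{magic}.

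\textbf{Coupling with Griffith and main obstacle.} Plugging \eqref{expgrif} into the equivalent form \eqref{explicitder} of Griffith's criterion produces a Carath\'eodory ODE for $\elleps$ whose right-hand side depends on $\elleps$ both directly and through the retarded argument $\varphi^\eps(t)$ inside $\dot f^\eps+\nu g^\eps[\uepst]$. The main difficulty is the ensuing implicit coupling: $f^\eps$ is defined in terms of $\elleps$ via rule~(ii) while $\elleps$ is driven by $\dot f^\eps$ at the retarded time, and $\uepst$ itself is determined by an affine equation involving the nonlocal operator $\nu\de_t H^\eps$. This circularity is resolved by the finite speed of propagation encoded in $\ellepsd<1/\eps$, which forces $\varphi^\eps(t)<t$ and makes both $f^\eps$ and $\elleps$ constructible step by step on the reflection intervals generated by $\omega^\eps$. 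On each such interval, for frozen $\elleps$ the representation formula differentiated in $t$ is contractive in $\uepst$ on short time (thanks to the estimates in Proposition~\ref{Hprop}), giving $\uepst$ via Banach's fixed-point theorem; for frozen $f^\eps$ the ODE for $\elleps$ is locally well-posed thanks to the local Lipschitz regularity assumed on $\kappa$. Alternating the two steps and iterating over consecutive reflection intervals yields the unique global pair $(\ueps,\elleps)$, which is precisely the construction carried out in \cite{DMLazNar16, Riv, RivNar} and which I would invoke to conclude.
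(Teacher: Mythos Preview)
The paper does not actually prove this theorem: it is stated as a collection of results from \cite{DMLazNar16}, \cite{LazNar}, \cite{Riv} and \cite{RivNar}, with no argument given beyond that citation. Your proposal therefore goes further than the paper itself, supplying a correct high-level sketch of the d'Alembert/Duhamel construction and the reflection-interval iteration that those references carry out; in particular your identification of the main obstacle (the implicit coupling between $f^\eps$, $\elleps$ and $\uepst$) and of its resolution via finite propagation speed and a local fixed-point argument matches the strategy of \cite{RivNar} and \cite{DMLazNar16}.
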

	\begin{rmk}[\textbf{Regularity}]\label{regularity}
		If the data are more regular, namely:
		\begin{equation*}
				\weps \in \widetilde  H^2(0,+\infty),\quad u^\eps_0 \in H^2(0,\ell_0) , \quad u^\eps_1 \in H^1(0,\ell_0) ,
		\end{equation*}
		if the (positive) toughness $\kappa$ belongs to $\widetilde{C}^{0,1}([\ell_0,+\infty))$ and if besides \eqref{compatibility0} also the following first order compatibility conditions are satisfied:
		\begin{equation*}
		\begin{gathered}
		u^\eps_1(0)=\wepsd(0),\\
		\!\!\Big(u^\eps_1(\ell_0)\!=\!0,\,\, \dot{u}^\eps_0(\ell_0)^2\!\le \!2\kappa(\ell_0)\Big)\text{ or }\Big(u^\eps_1(\ell_0)\!\neq\! 0,\,\,\dot{u}^\eps_0(\ell_0)^2{-}\eps^2u^\eps_1(\ell_0)^2\!=\!2\kappa(\ell_0),\,\,\frac{\dot{u}^\eps_0(\ell_0)}{u^\eps_1(\ell_0)}\!<\!{-}\varepsilon\Big),
		\end{gathered}		
		\end{equation*}
		 then the solution $\ueps$ is in $\widetilde H^2(\Omega^\eps)$.
	\end{rmk}
	\begin{thm}[\textbf{Continuous Dependence}]\label{contdependence}
		Fix $\nu\ge 0$, $\ell_0>0$, $\eps>0$, assume the functions $\weps$, $u_0^\eps$ and $u_1^\eps$ satisfy \eqref{bdryregularity}, \eqref{compatibility0} and let the toughness $\kappa$ be positive and belong to $\widetilde{C}^{0,1}([\ell_0,+\infty))$. Consider sequences of functions $\{w^\eps_n\}_{n\in\enne}$, $\{{u_0^\eps}_n\}_{n\in\enne}$ and $\{{u_1^\eps}_n\}_{n\in\enne}$ satisfying \eqref{bdryregularity} and \eqref{compatibility0}, and let $(u^\eps_n,\ell^\eps_n)$ and $(\ueps,\elleps)$ be the solutions of coupled problem \eqref{problem2}\&\eqref{Griffith} given by Theorem~\ref{exuniq} corresponding to the data with and without the subscript $n$, respectively. If the following convergences hold true as $n\to+\infty$:
		\begin{equation*}
			{u_0^\eps}_n\to u_0^\eps\text{ in }H^1(0,\ell_0),\,\,{u_1^\eps}_n\to u_1^\eps\text{ in }L^2(0,\ell_0)\text{ and }w^\eps_n\to\weps \text{ in }\widetilde{H}^1(0,+\infty),
		\end{equation*}
		then for every $T>0$ one has as $n\to+\infty$:
		\begin{itemize}
			\item[-] $\ell^\eps_n\to\ell^\eps$ in $W^{1,1}(0,T)$;
			\item[-] $u^\eps_n\to \ueps$ uniformly in $[0,T]\times[0,+\infty)$;
			\item[-] $u^\eps_n\to \ueps$ in $H^1((0,T)\times(0,+\infty))$;
			\item[-] $u^\eps_n\to \ueps$ in $C^0([0,T];H^1(0,+\infty))$ and in $ C^1([0,T];L^2(0,+\infty))$;
			\item[-] $(u^\eps_n)_x(\cdot,0)\to \uepsx(\cdot,0)$ in $L^2(0,T)$.
		\end{itemize}
		\end{thm}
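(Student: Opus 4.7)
The strategy is to exploit the representation formula of Theorem \ref{exuniq}, which reduces the convergence of $(u_n^\eps,\ell_n^\eps)$ to that of the auxiliary function $f_n^\eps$ and of the front $\ell_n^\eps$, modulo the nonlocal viscous term $H^\eps[(u_n^\eps)_t]$. All five listed convergences then fall out of a single scheme once one establishes $f_n^\eps\to f^\eps$ in $\widetilde H^1(-\eps\ell_0,+\infty)$, $\ell_n^\eps\to\elleps$ in $W^{1,1}_{\mathrm{loc}}$, and $(u_n^\eps)_t\to\uepst$ in $L^2_{\mathrm{loc}}$: from the formula one then reads off uniform convergence of $u_n^\eps$, $C^0H^1$ and $C^1L^2$ convergence, and, via \eqref{expux}, $L^2$ convergence of the boundary trace $(u_n^\eps)_x(\cdot,0)$.

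I would first treat the initial slab $[-\eps\ell_0,\eps\ell_0]$, where rule (i) of Theorem \ref{exuniq} expresses $f_n^\eps$ as an affine combination of $w_n^\eps(0)$, ${u_0^\eps}_n$ and a primitive of ${u_1^\eps}_n$; the assumed convergences immediately yield $f_n^\eps\to f^\eps$ in $H^1$ and uniformly on this interval. In the corresponding "cone-like" region of $\Omega^\eps_n$, strictly below the first reflection against the front, the representation formula reduces to the d'Alembert part plus a Volterra-type term involving $H^\eps[(u_n^\eps)_t]$. A Grönwall estimate in $L^2$, exploiting the Lipschitz dependence of $\Theta\mapsto H^\eps[\Theta]$ established in Proposition \ref{Hprop}, delivers $u_n^\eps\to \ueps$ in $\widetilde H^1$ on this first region, together with the stronger topologies claimed.

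The core of the argument is to propagate the convergence past successive reflections of characteristics on the curve $x=\ell_n^\eps(t)$. The natural inductive unit is the slab determined by the iterates of $\omega_n^\eps$ and ${\varphi_n^\eps}^{-1}$ applied to $0$. On each such slab three couplings must be handled simultaneously: rule (ii) recovers $f_n^\eps$ from $w_n^\eps$, $\ell_n^\eps$, and the previously computed values of $f_n^\eps$; Griffith's criterion in the ODE form \eqref{explicitder} determines $\dot\ell_n^\eps$ from $G^\eps_0$; and $G^\eps_0$ is expressed in \eqref{expgrif} through $\dot f_n^\eps$ at the retarded argument $t-\eps\ell_n^\eps(t)$ plus $g^\eps[(u_n^\eps)_t]$. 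Using the Lipschitz character of $\kappa$ and of the map $G\mapsto\max\{(G-\kappa)/(G+\kappa),0\}$, one closes a Grönwall-type estimate transferring convergence from one slab to the next; the finitely many slabs needed to cover $[0,T]$ then give the result on $[0,T]$.

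The main obstacle is precisely this closure step when $\nu>0$: the recursion on each slab is implicit, since $\dot f_n^\eps$ depends on the unknown $(u_n^\eps)_t$ through $g^\eps$, which in turn depends on $u_n^\eps$ via the representation formula. I would address this by a fixed-point argument on each slab, keeping the contraction constant below one by choosing the temporal scale of the slab small (comparable to $\eps\ell_0$) and by using the uniform energy bounds that follow from \eqref{eb} applied to $(u_n^\eps,\ell_n^\eps)$. A further technical point is that $\dot\ell_n^\eps$ is only bounded in $L^\infty$, so strong $W^{1,1}$ convergence is not automatic from the equi-Lipschitz bounds; one first extracts a subsequential weak-$*$ limit, identifies it with $\elleps$ by the uniqueness statement of Theorem \ref{exuniq}, and then upgrades to $L^1$ strong convergence of $\dot\ell_n^\eps$ from the a.e. convergence of the right-hand side of \eqref{explicitder} combined with dominated convergence.
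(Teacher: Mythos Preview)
The paper does not contain a proof of Theorem~\ref{contdependence}. Immediately before stating Theorems~\ref{exuniq} and~\ref{contdependence}, the author writes that ``These two Theorems are obtained by collecting what the authors proved in \cite{DMLazNar16}, \cite{LazNar}, \cite{Riv} and \cite{RivNar}.'' In particular, the continuous dependence result is the subject of the preprint \cite{Riv}, and the present paper simply quotes the statement without reproducing any argument. There is therefore no in-paper proof to compare your proposal against.

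That said, your outline is broadly consistent with the machinery the paper sets up and cites: the representation formula of Theorem~\ref{exuniq}, the slab-by-slab propagation along iterates of $\omega^\eps$, the ODE form \eqref{explicitder} for $\dot\ell^\eps$ with the Lipschitz toughness $\kappa$, and a Gr\"onwall/fixed-point closure on each slab are exactly the ingredients one would expect such a proof to use. Your handling of the upgrade from weak-$*$ to strong $W^{1,1}$ convergence of $\dot\ell_n^\eps$ via uniqueness and dominated convergence in \eqref{explicitder} is also the natural route. One point worth flagging: the contraction/Gr\"onwall step on each slab when $\nu>0$ is genuinely delicate, because the operator $\Theta\mapsto H^\eps[\Theta]$ and the functional $g^\eps[\,\cdot\,]$ are taken over domains $\Omega^\eps_n$ that themselves depend on $n$ through $\ell_n^\eps$; you must control not only $\|(u_n^\eps)_t-\uepst\|_{L^2}$ but also the mismatch of the integration regions $R^\eps_\pm$ induced by $\ell_n^\eps\neq\elleps$. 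Your sketch does not make this domain-variation error term explicit, and in a full proof it is typically the most laborious estimate to close.
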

	
	\section{Quasistatic evolutions}\label{sec2}
	This Section is devoted to the analysis of quasistatic evolutions for the debonding model we are studying. We first introduce and compare two different notions of this kind of evolutions (we refer to \cite{BouFraMar08} or \cite{MieRou15} for a wide and complete presentation on the topic), then we prove an existence and uniqueness result under suitable assumptions, see Theorem~\ref{exuniqquas}.\par 
	Fix $\ell_0>0$; throughout this Section we consider a loading term $w\in C^0([0,+\infty))$ such that $w\in AC([0,T])$ for every $T>0$ and a toughness $\kappa\in C^0([\ell_0,+\infty))$ such that $\kappa(x)>0$ for every $x\ge\ell_0$.
	\begin{defi}\label{enev}
		Let $\lambda\colon[0,+\infty)\to[\ell_0,+\infty)$ be a nondecreasing function such that $\lambda(0)=\ell_0$ and let $v\colon[0,+\infty)\times[0,+\infty)\to\erre$ be a function which for every $t\in[0,+\infty)$ satisfies $v(t,\cdot)\in H^1(0,+\infty)$, $v(t,0)=w(t)$, $v(t,x)=0$ for $x\ge\lambda(t)$ and such that $v_x(t,0)$ exists for a.e. $t\in(0,+\infty)$. We say that such a pair $(v,\lambda)$ is an \textbf{energetic evolution} if for every $t\in[0,+\infty)$ it holds:
		\begin{itemize}
			\item[(S)]$\displaystyle \frac 12\int_{0}^{\lambda(t)}v_x(t,\sigma)^2\d \sigma+\int_{\ell_0}^{\lambda(t)}\kappa(\sigma)\d \sigma\le\frac 12 \int_{0}^{\hat{\lambda}}\dot{\hat{v}}(\sigma)^2\d \sigma+\int_{\ell_0}^{\hat{\lambda}}\kappa(\sigma)\d \sigma,$\\
			for every $\hat\lambda\ge\lambda(t)$ and for every $\hat{v}\in H^1(0,\hat\lambda)$ satisfying $\hat{v}(0)=w(t)$ and $\hat{v}(\hat\lambda)=0;$
			\item[(EB)] $\displaystyle \frac 12\int_{0}^{\lambda(t)}v_x(t,\sigma)^2\d \sigma+\int_{\ell_0}^{\lambda(t)}\kappa(\sigma)\d \sigma+\int_{0}^{t}\dot{w}(\tau)v_x(\tau,0)\d\tau=\frac 12\int_{0}^{\ell_0}v_x(0,\sigma)^2\d \sigma.$
		\end{itemize}
	\end{defi}\noindent
	Here (S) stands for (global) stability, while (EB) for energy(-dissipation) balance. Roughly speaking an energetic evolution is a pair which fulfills an energy-dissipation balance being at every time a global minimiser of the functional $(v,\lambda)\mapsto\frac 12 \int_{0}^{{\lambda}}{\dot v}(\sigma)^2\d \sigma+\int_{\ell_0}^{{\lambda}}\kappa(\sigma)\d \sigma$, which is sum of potential energy and energy dissipated to debond the film.\par 
	On the contrary, this second Definition deals with local minima of the total energy:
	\begin{defi}\label{quasev}
		Given $\lambda$ and $v$ as in Definition~\ref{enev}, we say that the pair $(v,\lambda)$ is a \textbf{quasistatic evolution} if:
		\begin{itemize}
			\item[(i)] $\lambda$ is absolutely continuous on $[0,T]$ for every $T>0$ and $\lambda(0)=\ell_0$;
			\item[(ii)] $\displaystyle v(t,x)=w(t)\left(1-\frac{x}{\lambda(t)} \right)\chi_{[0,\lambda(t)]}(x),$ for every $(t,x)\in[0,+\infty)\times[0,+\infty)$;
			\item[(iii)] the quasistatic version of Griffith's criterion holds true, namely:
			\begin{equation*}
				\begin{cases}
				\dot\lambda(t)\ge 0,\\
				\frac 12 \frac{w(t)^2}{\lambda(t)^2}\le \kappa(\lambda(t)),\\
				\left[\frac 12 \frac{w(t)^2}{\lambda(t)^2}-\kappa(\lambda(t))\right]\dot\lambda(t)=0,
				\end{cases}\quad\quad\text{ for a.e. }t\in(0,+\infty).
			\end{equation*}			
		\end{itemize}
	\end{defi}
Similarities with dynamic Griffith's criterion \eqref{Griffith} are evident, with the exception of the term $\frac 12 \frac{w(t)^2}{\lambda(t)^2}$ which requires some explanations: like in the dynamic case we can introduce the notion of quasistatic energy release rate as $G_{\textnormal{qs}}(t)=-\partial_\lambda \mc E_{\textnormal{qs}}(t)$, where the quasistatic energy $\mc E_{\textnormal{qs}}$ is simply the potential one, kinetic energy being negligible in a quasistatic setting. By means of (ii) we can compute $\mc E_{\textnormal{qs}}(t)=\frac 12\int_{0}^{\lambda(t)}v_x(t,\sigma)^2\d \sigma=\frac 12 \frac{w(t)^2}{\lambda(t)}$, from which we recover $G_{\textnormal{qs}}(t)=\frac 12 \frac{w(t)^2}{\lambda(t)^2}$. Thus (iii) is the correct formulation of quasistatic Griffith's criterion. \par 
For a reason which will be clear during the proof of next Proposition we introduce for $x\ge\ell_0$  the function $\phi_\kappa(x):=x^2\kappa(x)$. When needed we will assume one or more of the following hypothesis:
\begin{itemize}
	\item[(K1)] $\phi_\kappa$ is nondecreasing on $[\ell_0,+\infty)$;
	\item[(K2)] $\phi_\kappa$ is strictly increasing on $[\ell_0,+\infty)$;
	\item[(K3)] $\phi_\kappa$ is strictly increasing on $[\ell_0,+\infty)$ and $\dot{\phi}_\kappa(x)>0$ for a.e. $x\in(\ell_0,+\infty)$;
	\item[(KW)] $\displaystyle\lim\limits_{x\to+\infty}\phi_\kappa(x)>\frac 12\max\limits_{t\in[0,T]}w(t)^2$ for every $T>0$, and $\displaystyle\phi_\kappa(\ell_0)\ge \frac 12 w(0)^2$.
\end{itemize}
It is worth noticing that (K1) ensures local minima of the energy are actually global, as stated in Proposition~\ref{equiv}. Conditions (K2) and (K3) instead imply uniqueness of the minimum, see Proposition~\ref{explicitlambda}. Finally the first assumption in (KW) is related to the existence of such a minimum, replacing the role of coercivity of the energy, which can be missing.
\begin{prop}\label{equiv}
		Assume (K1). Then a pair $(v,\lambda)$ is an energetic evolution if and only if:
		\begin{itemize}
			\item[(o)] $\lambda$ is non decreasing on $[0,+\infty)$ and $\lambda(0)=\ell_0$;
			\item[(s1)] $\displaystyle v(t,x)=w(t)\left(1-\frac{x}{\lambda(t)} \right)\chi_{[0,\lambda(t)]}(x),$ for every $(t,x)\in[0,+\infty)\times[0,+\infty)$;
			\item[(s2)] $\displaystyle\frac 12 \frac{w(t)^2}{\lambda(t)^2}\le \kappa(\lambda(t)),$ for every $t\in[0,+\infty),$
			\item[(eb)] $\displaystyle\frac 12 \frac{w(t)^2}{\lambda(t)}+\int_{\ell_0}^{\lambda(t)}\kappa(\sigma)\d \sigma-\int_{0}^{t}\dot{w}(\tau)\frac{w(\tau)}{\lambda(\tau)}\d\tau=\frac 12 \frac{w(0)^2}{\ell_0},$ for every $t\in[0,+\infty)$.
		\end{itemize}
	\end{prop}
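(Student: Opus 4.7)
The proof splits naturally into two implications; the bridge in both directions is the elementary fact that the unique minimiser of $\hat v\mapsto \tfrac12\int_0^{\hat\lambda}\dot{\hat v}(\sigma)^2\d\sigma$ over functions in $H^1(0,\hat\lambda)$ with $\hat v(0)=w(t)$ and $\hat v(\hat\lambda)=0$ is the affine function $w(t)(1-x/\hat\lambda)$, whose associated energy is $\tfrac12 w(t)^2/\hat\lambda$.

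\textbf{Energetic evolution $\Rightarrow$ (o), (s1), (s2), (eb).} Monotonicity (o) is built into Definition~\ref{enev}. For (s1), I would apply condition (S) with $\hat\lambda=\lambda(t)$ fixed: then $v(t,\cdot)$ minimises the Dirichlet integral on $(0,\lambda(t))$ with its own boundary data, so by the remark above $v(t,x)=w(t)(1-x/\lambda(t))$ on $[0,\lambda(t)]$, which extends by zero outside. Plugging this into (EB) immediately gives (eb). For (s2), I would now use (S) with $\hat\lambda>\lambda(t)$ arbitrary and $\hat v$ equal to the affine minimiser on $(0,\hat\lambda)$; this reduces (S) to
\[
\int_{\lambda(t)}^{\hat\lambda}\kappa(\sigma)\d\sigma \ge \tfrac12 w(t)^2\left(\tfrac1{\lambda(t)}-\tfrac1{\hat\lambda}\right),
\]
and dividing by $\hat\lambda-\lambda(t)$ and sending $\hat\lambda\to\lambda(t)^+$ (using continuity of $\kappa$) produces exactly $\kappa(\lambda(t))\ge \tfrac12 w(t)^2/\lambda(t)^2$.

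\textbf{(o), (s1), (s2), (eb) $\Rightarrow$ energetic evolution.} Using (s1) we compute $\tfrac12\int_0^{\lambda(t)}v_x(t,\sigma)^2\d\sigma=\tfrac12 w(t)^2/\lambda(t)$ and $v_x(t,0)=-w(t)/\lambda(t)$, so (eb) instantly becomes (EB). For (S), the minimality property of the affine extension reduces the task to proving, for every $\hat\lambda\ge\lambda(t)$,
\[
\int_{\lambda(t)}^{\hat\lambda}\kappa(\sigma)\d\sigma \;\ge\; \tfrac12 w(t)^2\!\int_{\lambda(t)}^{\hat\lambda}\frac{\d\sigma}{\sigma^2}.
\]
This is where assumption (K1) enters: by monotonicity of $\phi_\kappa(x)=x^2\kappa(x)$ together with (s2),
\[
\sigma^2\kappa(\sigma)=\phi_\kappa(\sigma)\ge \phi_\kappa(\lambda(t))=\lambda(t)^2\kappa(\lambda(t))\ge \tfrac12 w(t)^2\qquad\text{for all }\sigma\ge\lambda(t),
\]
hence $\kappa(\sigma)\ge \tfrac12 w(t)^2/\sigma^2$ pointwise on $[\lambda(t),\hat\lambda]$, and integrating yields the desired inequality.

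\textbf{Expected main obstacle.} There is no serious analytic obstacle here: everything follows from the affine-minimiser observation plus elementary one-variable calculus. The only subtle conceptual point—and presumably the reason the function $\phi_\kappa$ was singled out—is recognising that turning the \emph{pointwise} stability (s2) at $\lambda(t)$ into \emph{global} stability against arbitrary larger $\hat\lambda$ requires propagating the inequality $\tfrac12 w(t)^2\le \sigma^2\kappa(\sigma)$ from $\sigma=\lambda(t)$ to all $\sigma\ge\lambda(t)$, which is precisely the monotonicity of $\phi_\kappa$ postulated in (K1). Without (K1) the implication from local to global minimality can fail, which anticipates the more refined roles of (K2), (K3), and (KW) in the subsequent Propositions.
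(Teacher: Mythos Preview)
Your proposal is correct and follows essentially the same route as the paper's proof: both use the affine-minimiser observation to extract (s1) from (S) with $\hat\lambda=\lambda(t)$, then obtain (s2) by comparing against affine competitors and computing a one-sided derivative at $\lambda(t)$ (the paper phrases this as $\dot E_t(\lambda(t))\ge 0$ for the energy $E_t(x)=\tfrac12 w(t)^2/x+\int_{\ell_0}^x\kappa$, which is your limit of difference quotients); for the converse, both use (K1) together with (s2) to propagate $\phi_\kappa(\sigma)\ge\tfrac12 w(t)^2$ to all $\sigma\ge\lambda(t)$ and then integrate. The only differences are cosmetic.
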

\begin{proof}
	Let $(v,\lambda)$ be an energetic evolution, then (o) is satisfied by definition. Now fix $t\in[0,+\infty)$ and choose $\hat{\lambda}=\lambda(t)$ in (S). Then we deduce that $v(t,\cdot)$ minimises the functional $\displaystyle\frac 12 \int_{0}^{\lambda(t)}\dot{\hat{v}}(\sigma)^2\d \sigma$ among all functions $\hat{v}\in H^1(0,\lambda(t))$ such that $\hat{v}(0)=w(t)$ and $\hat{v}(\lambda(t))=0$, and this implies (s1). Choosing now $\hat{v}(x)=w(t)\left(1-\frac{x}{\hat\lambda}\right)\chi_{[0,\hat\lambda]}(x)$ in (S) and exploiting (s1) we get:
	\begin{equation*}
		\frac 12 \frac{w(t)^2}{\lambda(t)}+\int_{\ell_0}^{\lambda(t)}\kappa(\sigma)\d \sigma\le\frac 12 \frac{w(t)^2}{\hat\lambda}+\int_{\ell_0}^{\hat\lambda}\kappa(\sigma)\d \sigma,\quad\quad\text{for every }\hat\lambda\ge\lambda(t).
	\end{equation*}
	This means that the energy $E_t\colon[\lambda(t),+\infty)\to [0,+\infty)$ defined by $\displaystyle E_t(x):=\frac 12\frac{w(t)^2}{x}+\int_{\ell_0}^{x}\kappa(\sigma)\d \sigma$ has a global minimum in $x=\lambda(t)$ and so $\dot{E}_t(\lambda(t))\ge 0$, namely (s2) holds true. Finally (eb) follows by (EB) exploiting (s1).\par
	Assume now that (o), (s1), (s2) and (eb) hold true. To prove that $(v,\lambda)$ is an energetic evolution it is enough to show the validity of (S), being (EB) trivially implied by (eb) and (s1). So let us fix $t\in[0,+\infty)$ and notice that (s2) is equivalent to $\phi_\kappa(\lambda(t))\ge\frac 12 w(t)^2$. By (K1) we hence deduce that $\phi_\kappa(x)\ge\frac 12 w(t)^2$ for every $x\ge\lambda(t)$, i.e. $\dot{E}_t(x)\ge 0$ for every $x\ge\lambda(t)$. This means that $E_t$ has a global minimum in $x=\lambda(t)$ and so we obtain:
	\begin{equation*}
		\frac 12 \frac{w(t)^2}{\lambda(t)}+\int_{\ell_0}^{\lambda(t)}\kappa(\sigma)\d \sigma\le\frac 12 \frac{w(t)^2}{\hat\lambda}+\int_{\ell_0}^{\hat\lambda}\kappa(\sigma)\d \sigma,\quad\quad\text{for every }\hat\lambda\ge\lambda(t),
	\end{equation*}
	which in particular implies (S), since affine functions minimise the potential energy.
\end{proof}
If we do not strenghten the assumptions on the toughness $\kappa$ there is no hope to gain more regularity on $\lambda$, even in the case of a constant loading term $w>0$. Indeed it is enough to consider $\kappa(x)=\frac 12 \frac{w^2}{x^2}$ (in this case $\phi_\kappa$ is constant) to realise that any function satisfying (o) automatically satisfies (s2) and (eb).
\begin{lemma}\label{cont}
	Assume (K2). Then any function $\lambda$ satisfying (o), (s2) and (eb) is continuous.
\end{lemma}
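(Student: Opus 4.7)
Since $\lambda$ is nondecreasing by (o), it admits finite one–sided limits $\lambda^-(t_0)$ and $\lambda^+(t_0)$ at every $t_0\in[0,+\infty)$ (with the convention $\lambda^-(0)=\lambda(0)$), and its set of discontinuities is at most countable. To prove the statement I would argue by contradiction: assume there exists $t_0$ such that $\lambda^-(t_0)<\lambda^+(t_0)$, so that at least one of the two inequalities $\lambda^-(t_0)\le\lambda(t_0)\le\lambda^+(t_0)$ is strict, and derive a contradiction using (s2) together with the strict monotonicity of $\phi_\kappa$ assumed in (K2).

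The first step is to observe that (eb) forces the ``instantaneous potential plus surface energy''
\[
G(t):=\frac12\frac{w(t)^2}{\lambda(t)}+\int_{\ell_0}^{\lambda(t)}\kappa(\sigma)\,\d\sigma
\]
to be continuous on $[0,+\infty)$, because the work term $t\mapsto\int_0^t\dot{w}(\tau)\frac{w(\tau)}{\lambda(\tau)}\d\tau$ is absolutely continuous (the integrand lies in $L^1_{\mathrm{loc}}$ since $w\in AC_{\mathrm{loc}}$, $w$ is bounded on compacts and $\lambda\ge\ell_0>0$). Letting $t\to t_0^\pm$ and using continuity of $w$, this yields $F(\lambda^-(t_0))=F(\lambda(t_0))=F(\lambda^+(t_0))$, where I define the scalar function
\[
F(x):=\frac12\frac{w(t_0)^2}{x}+\int_{\ell_0}^{x}\kappa(\sigma)\,\d\sigma,\qquad x\ge\ell_0.
\]

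The second step is to exploit (s2) to control the sign of $F'$. A direct computation gives $F'(x)=x^{-2}\bigl(\phi_\kappa(x)-\tfrac12 w(t_0)^2\bigr)$. From (s2) at $t=t_0$ I get $\phi_\kappa(\lambda(t_0))\ge\tfrac12 w(t_0)^2$, and from (s2) at times $t<t_0$ together with the continuity of both $\phi_\kappa$ (which holds since $\kappa\in C^0$) and $w$, I obtain $\phi_\kappa(\lambda^-(t_0))\ge\tfrac12 w(t_0)^2$ by passing to the left limit. If $\lambda(t_0)<\lambda^+(t_0)$, hypothesis (K2) yields $\phi_\kappa(x)>\phi_\kappa(\lambda(t_0))\ge\tfrac12 w(t_0)^2$ for every $x\in(\lambda(t_0),\lambda^+(t_0))$, so $F'>0$ there and hence $F(\lambda^+(t_0))>F(\lambda(t_0))$, contradicting the equality established above. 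The symmetric argument applied to $(\lambda^-(t_0),\lambda(t_0))$ handles the case $\lambda^-(t_0)<\lambda(t_0)$, completing the contradiction.

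The only delicate point I anticipate is the passage to the left limit in (s2): one needs the continuity of $\phi_\kappa$, which is granted by the standing hypothesis $\kappa\in C^0([\ell_0,+\infty))$ at the beginning of Section~\ref{sec2}, and that $w$ is continuous so that $w(t)^2\to w(t_0)^2$. The case $t_0=0$ reduces to Case~1 above, since only the right limit is relevant. No finer information on the toughness beyond (K2) is needed.
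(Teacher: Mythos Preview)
Your proposal is correct and follows essentially the same route as the paper: both arguments use (eb) to deduce that the quantity $\frac12\frac{w(t_0)^2}{x}+\int_{\ell_0}^{x}\kappa$ takes the same value at the one-sided limits of $\lambda$, pass (s2) to the left limit to obtain $\phi_\kappa(\lambda^-(t_0))\ge\frac12 w(t_0)^2$, and then use (K2) to force $F'>0$ on the jump interval, yielding the contradiction. The only cosmetic difference is that you split the jump into the two subintervals $[\lambda^-(t_0),\lambda(t_0)]$ and $[\lambda(t_0),\lambda^+(t_0)]$, whereas the paper works directly over $[\lambda^-(t_0),\lambda^+(t_0)]$; the underlying computation is the same.
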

\begin{proof}
	Let us assume by contradiction that there exists a time $\bar t\in[0,+\infty)$ in which $\lambda$ is not continuous, namely $\lambda^-(\bar t\,)<\lambda^+(\bar t\,)$. Here we adopt the convention that $\lambda^-(0)=\lambda(0)=\ell_0$. Exploiting (s2), (eb)  and the continuity of $\kappa$ and $w$ we deduce that:
	\begin{subequations}\label{minus}
		\begin{equation}
		\frac 12\frac{w(\bar t\,)^2}{\lambda^-(\bar t\,)^2}\le \kappa(\lambda^-(\bar t\,)),
		\end{equation}
		\begin{equation}\label{minusb}
			\frac 12 \frac{w(\bar t\,)^2}{\lambda^+(\bar t\,)}+\int_{\ell_0}^{\lambda^+(\bar t\,)}\kappa(\sigma)\d \sigma=\frac 12 \frac{w(\bar t\,)^2}{\lambda^-(\bar t\,)}+\int_{\ell_0}^{\lambda^-(\bar t\,)}\kappa(\sigma)\d \sigma.
		\end{equation}
	\end{subequations}
	By using (K2), from \eqref{minus} we get:
	\begin{align*}
		0&=\int_{\lambda^-(\bar t\,)}^{\lambda^+(\bar t\,)}\kappa(\sigma)\d \sigma-\frac 12 w(\bar t\,)^2\left(\frac{1}{\lambda^-(\bar t\,)}-\frac{1}{\lambda^+(\bar t\,)}\right)=\int_{\lambda^-(\bar t\,)}^{\lambda^+(\bar t\,)}\frac{\phi_\kappa(\sigma)-w(\bar t\,)^2/2}{\sigma^2}\d \sigma\\
		&>\left(\phi_\kappa(\lambda^-(\bar t\,))-\frac 12 w(\bar t\,)^2\right)\int_{\lambda^-(\bar t\,)}^{\lambda^+(\bar t\,)}\frac{1}{\sigma^2}\d \sigma\ge 0.
	\end{align*}
	This leads to a contradiction and hence we conclude.
\end{proof}
\begin{lemma}\label{constant}
		Assume (K2) and let $\lambda$ be a function satisfying (o), (s2) and (eb). If there exists a time $\bar t\in(0,+\infty)$ in which (s2) holds with strict inequality, then $\lambda$ is constant in a neighborhood of $\bar t$.
\end{lemma}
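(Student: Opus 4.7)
The strategy is a proof by contradiction driven by the energy balance (eb). Assume $\lambda$ is not constant in any neighborhood of $\bar t$; since $\lambda$ is nondecreasing and (by Lemma~\ref{cont}) continuous, I can find a sequence $s_n\to\bar t$ with $\lambda(s_n)\neq\lambda(\bar t)$. By symmetry I treat only the case $s_n>\bar t$ with $\lambda(s_n)>\lambda(\bar t)$, the left-sided case being essentially identical.

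Writing (eb) at times $s_n$ and $\bar t$ and subtracting yields
\begin{equation*}
\frac{w(s_n)^2}{2\lambda(s_n)}-\frac{w(\bar t\,)^2}{2\lambda(\bar t\,)}+\int_{\lambda(\bar t\,)}^{\lambda(s_n)}\kappa(\sigma)\d\sigma=\int_{\bar t}^{s_n}\dot w(\tau)\frac{w(\tau)}{\lambda(\tau)}\d\tau.
\end{equation*}
The key step is to decompose the right-hand side by freezing $\lambda$ at $\bar t$:
\begin{equation*}
\int_{\bar t}^{s_n}\dot w(\tau)\frac{w(\tau)}{\lambda(\tau)}\d\tau=\frac{w(s_n)^2-w(\bar t\,)^2}{2\lambda(\bar t\,)}+R(s_n),\qquad R(s_n):=\int_{\bar t}^{s_n}\dot w(\tau)w(\tau)\frac{\lambda(\bar t\,)-\lambda(\tau)}{\lambda(\tau)\lambda(\bar t\,)}\d\tau.
\end{equation*}
Using the monotonicity of $\lambda$ one sees that $|R(s_n)|\le\dfrac{\lambda(s_n)-\lambda(\bar t\,)}{\lambda(\bar t\,)^2}\displaystyle\int_{\bar t}^{s_n}|\dot w(\tau)w(\tau)|\d\tau$. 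Substituting the decomposition in the previous display, rearranging so that the terms $w(s_n)^2$ cancel, and using the elementary identity $\frac{1}{\lambda(\bar t\,)}-\frac{1}{\lambda(s_n)}=\displaystyle\int_{\lambda(\bar t\,)}^{\lambda(s_n)}\sigma^{-2}\d\sigma$ to convert $\frac{w(s_n)^2}{2}\bigl(\frac{1}{\lambda(\bar t\,)}-\frac{1}{\lambda(s_n)}\bigr)$ into an integral, one arrives at the clean identity
\begin{equation*}
\int_{\lambda(\bar t\,)}^{\lambda(s_n)}\frac{\phi_\kappa(\sigma)-w(s_n)^2/2}{\sigma^2}\d\sigma=R(s_n).
\end{equation*}

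To close the argument I set $c:=\phi_\kappa(\lambda(\bar t\,))-w(\bar t\,)^2/2>0$, which is strictly positive by the assumed strict inequality in (s2) at $\bar t$. By continuity of $w$, $\kappa$ and $\lambda$, for $n$ large one has $w(s_n)^2/2\le\phi_\kappa(\lambda(\bar t\,))-c/2$, while (K2) gives $\phi_\kappa(\sigma)\ge\phi_\kappa(\lambda(\bar t\,))$ for every $\sigma\in[\lambda(\bar t\,),\lambda(s_n)]$. Hence the integrand on the left is at least $c/(2\sigma^2)$, so the left-hand side is bounded below by $\dfrac{c/2}{\lambda(s_n)^2}(\lambda(s_n)-\lambda(\bar t\,))$. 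Combining with the bound on $|R(s_n)|$, dividing both sides by $\lambda(s_n)-\lambda(\bar t\,)>0$ and letting $n\to+\infty$, the right-hand side vanishes by absolute continuity of $w$ on $[0,s_1]$, whereas the left-hand side tends to $c/(2\lambda(\bar t\,)^2)>0$, a contradiction.

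The main obstacle is the algebraic manipulation of the loading integral, i.e.\ isolating the leading order term $\frac{w(s_n)^2-w(\bar t\,)^2}{2\lambda(\bar t\,)}$ and controlling the remainder $R(s_n)$ of order $(\lambda(s_n)-\lambda(\bar t\,))\cdot o(1)$: once this has been done, continuity of $\lambda$ from Lemma~\ref{cont} and strict monotonicity of $\phi_\kappa$ from (K2) immediately provide the contradictory two-sided estimate on $\lambda(s_n)-\lambda(\bar t\,)$.
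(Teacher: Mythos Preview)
Your proof is correct and follows essentially the same route as the paper's: both subtract (eb) at two nearby times, freeze $\lambda$ in the loading integral to isolate a remainder of size $(\lambda(t_2)-\lambda(t_1))\cdot o(1)$, and use the strict inequality at $\bar t$ together with (K2) to bound the main term from below by a positive multiple of $\lambda(t_2)-\lambda(t_1)$. The only cosmetic differences are that the paper packages the lower bound via the mean value theorem applied to an auxiliary function $\Phi(t,x)$ and works directly on a small interval (obtaining constancy without contradiction), whereas you write the integral explicitly in terms of $\phi_\kappa$ and argue by contradiction along a sequence $s_n\to\bar t$.
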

\begin{proof}
	Let us consider the function:\begin{equation*}
			\Phi(t,x):=\frac 12\frac{w(t)^2}{x}+\int_{\ell_0}^{x}\kappa(\sigma)\d \sigma-\int_{0}^{t}\dot{w}(\tau)\frac{w(\tau)}{\lambda(\tau)}\d\tau,\quad\text{ for }(t,x)\in[0,+\infty)\times[\ell_0,+\infty),
	\end{equation*}
	which is continuous on its domain. Moreover the derivative of $\Phi$ in the direction $x$ exists at every point and it is continuous on $[0,+\infty)\times[\ell_0,+\infty)$, being given by:
	\begin{equation*}
		\Phi_x(t,x)=\kappa(x)-\frac 12\frac{w(t)^2}{x^2}.
	\end{equation*}
	Since by assumption $\Phi_x(\bar t,\lambda(\bar t\,))>0$, by continuity we deduce that:
	\begin{equation}\label{greaterc}
		\Phi_x(t,x)\ge m>0,\quad\text{for every }(t,x)\in[a,b]\times[c,d],
	\end{equation} 
	where $[a,b]\times[c,d]\subset(0,+\infty)\times[\ell_0,+\infty)$ is a suitable rectangle containing the point $(\bar t,\lambda(\bar t))$. By continuity of $\lambda$ (given by Lemma~\ref{cont}), we can assume without loss of generality that $\lambda([a,b])\subset[c,d]$. Now we fix $t_1,\,t_2\in[a,b]$, $t_1\le t_2$, and by the mean value Theorem we deduce:
	\begin{equation*}
		\Phi(t_2,\lambda(t_2))-\Phi(t_2,\lambda(t_1))=\Phi_x(t_2,\xi)(\lambda(t_2)-\lambda(t_1)),\quad\text{ for some }\xi\in[\lambda(t_1),\lambda(t_2)]\subset[c,d].
	\end{equation*}
	From this equality, exploiting \eqref{greaterc} and (eb), we get:
	\begin{equation}\label{acest}
		\begin{aligned}
		\lambda(t_2)-\lambda(t_1)&\le\frac 1m\big(	\Phi(t_2,\lambda(t_2))-\Phi(t_2,\lambda(t_1))\big)=\frac 1m\big(	\Phi(t_1,\lambda(t_1))-\Phi(t_2,\lambda(t_1))\big)\\
		&=\frac 1m \left(\frac{1}{2\lambda(t_1)}\big(w(t_1)^2-w(t_2)^2\big)+\int_{t_1}^{t_2}\dot{w}(\tau)\frac{w(\tau)}{\lambda(\tau)}\d\tau\right)\\
		&=\frac 1m \int_{t_1}^{t_2}\dot{w}(\tau)w(\tau)\left(\frac{1}{\lambda(\tau)}-\frac{1}{\lambda(t_1)}\right)\d\tau\\
		&\le \frac{\lambda(t_2)-\lambda(t_1)}{m\ell_0^2}\int_{a}^{b}|\dot{w}(\tau)w(\tau)|\d\tau .
		\end{aligned}
	\end{equation}	
	Since $w$ is absolutely continuous we can also assume that the interval $[a,b]$ is so small that:
	\begin{equation*}
		\frac{1}{m\ell_0^2}\int_{a}^{b}|\dot{w}(\tau)w(\tau)|\d\tau\le \frac 12.
	\end{equation*} 
	From \eqref{acest} we hence deduce that $\lambda(t_2)=\lambda(t_1)$, and so we conclude.
\end{proof}
\begin{rmk}\label{alsoinequality}
	Lemmas~\ref{cont} and \ref{constant} hold true even weakening a bit assumption (eb). It is indeed enough to assume that:
	\begin{equation}\label{nonincr}
	\!\!\!\text{the function}\,\,\,	t\mapsto \frac 12\frac{w(t)^2}{\lambda(t)}+\int_{\ell_0}^{\lambda(t)}\kappa(\sigma)\d \sigma-\int_{0}^{t}\dot{w}(\tau)\frac{w(\tau)}{\lambda(\tau)}\d\tau \,\,\,\text{is nonincreasing in }[0,+\infty).
	\end{equation}
	The only changes in the proofs are in \eqref{minusb} and in the first equality in \eqref{acest}: in this case they become an inequality.
\end{rmk}
We now introduce a notation, already adopted in \cite{Almi} to deal with quasistatic hydraulic fractures: given a continuous function $f\colon[a,b]\to\erre$ we define by $f_*$ the smallest nondecreasing function greater or equal than $f$, namely $f_*(x):=\max\limits_{y\in[a,x]}f(y)$. We refer to \cite{Almi} for its properties, we only want to recall that if $f\in W^{1,p}(a,b)$ for some $p\in[1,+\infty]$, then also $f_*$ belongs to the same Sobolev space and $\dot{f}_*(x)=\dot{f}(x)\chi_{\{f=f_*\}}(x)$ almost everywhere.
\begin{prop}\label{explicitlambda}
	Assume (K2) and let $\lambda$ be a function satisfying (o), (s2) and (eb). Then:
	\begin{equation}\label{solution}
	\lambda(t)=\phi_\kappa^{-1}\left(\max\left\{\frac 12 (w^2)_*(t),\phi_\kappa(\ell_0)\right\}\right),\quad\text{for every }t\in[0,+\infty).
	\end{equation}
\end{prop}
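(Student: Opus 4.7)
My plan is to introduce the candidate function $\mu(t):=\phi_\kappa^{-1}(\max\{\tfrac 12 (w^2)_*(t),\phi_\kappa(\ell_0)\})$ and to establish the two inequalities $\lambda(t)\ge\mu(t)$ and $\lambda(t)\le\mu(t)$ separately for every $t\in[0,+\infty)$.

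For the lower bound I will simply unwind the stability condition (s2), which in terms of $\phi_\kappa$ reads $\phi_\kappa(\lambda(t))\ge\tfrac 12 w(t)^2$. Since $\phi_\kappa$ is strictly increasing by (K2) and $\lambda$ is nondecreasing by (o), the map $t\mapsto\phi_\kappa(\lambda(t))$ is nondecreasing, so passing to the running supremum on the right produces $\phi_\kappa(\lambda(t))\ge\tfrac 12 (w^2)_*(t)$. Combining this with the trivial bound $\phi_\kappa(\lambda(t))\ge\phi_\kappa(\ell_0)$ and inverting $\phi_\kappa$ gives $\lambda(t)\ge\mu(t)$. Note that specialising (s2) at $t=0$ forces $\phi_\kappa(\ell_0)\ge\tfrac 12 w(0)^2$, whence $\mu(0)=\ell_0=\lambda(0)$.

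The upper bound is the delicate direction, and I will argue by contradiction: assume $\lambda(t_0)>\mu(t_0)$ for some $t_0>0$. Both $\lambda$ and $\mu$ are continuous (the former by Lemma~\ref{cont}, the latter because $w$ and $\phi_\kappa^{-1}$ are continuous and the running-max operation preserves continuity) and they coincide at $t=0$, so the number $s:=\sup\{t\in[0,t_0]\mid\lambda(t)=\mu(t)\}$ lies in $[0,t_0)$, satisfies $\lambda(s)=\mu(s)$, and obeys $\lambda>\mu$ on $(s,t_0]$. On that same interval one has $\phi_\kappa(\lambda(t))>\phi_\kappa(\mu(t))\ge\tfrac 12 w(t)^2$, so (s2) holds with strict inequality at every $t\in(s,t_0]$; Lemma~\ref{constant} then makes $\lambda$ locally constant at each such point, hence constant on the connected interval $(s,t_0]$, and continuity forces $\lambda(s)=\lambda(t_0)$. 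Combining this with $\lambda(s)=\mu(s)$ and the monotonicity of $\mu$ produces $\lambda(t_0)=\mu(s)\le\mu(t_0)$, which contradicts the standing assumption.

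The main obstacle is the upper bound. Condition (eb) never enters the argument directly; it appears only through Lemma~\ref{constant}, whose content is precisely that $\lambda$ cannot strictly overshoot the threshold set by (s2). Once this sticking mechanism is in place, the rest of the proof is a matter of careful bookkeeping with continuity and monotonicity.
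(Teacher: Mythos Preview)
Your proof is correct and follows essentially the same approach as the paper: the lower bound via (s2), monotonicity, and invertibility of $\phi_\kappa$ is identical, and for the upper bound both arguments invoke Lemma~\ref{constant} to show that $\lambda$ must be constant wherever it strictly exceeds $\mu$, then propagate this constancy backward to reach a contradiction. The only cosmetic difference is that you stop at the last coincidence time $s$ and compare $\lambda(t_0)=\mu(s)\le\mu(t_0)$, whereas the paper pushes all the way back to $t=0$ and derives $\phi_\kappa(\ell_0)>\phi_\kappa(\ell_0)$; these are the same mechanism phrased slightly differently.
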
 
\begin{proof}
		Let $\lambda$ satisfy (o), (s2) and (eb). By using (s2) we get $\phi_\kappa(\lambda(t))\ge\frac 12 w(t)^2$ for every $t\in[0,+\infty)$, and since the left-hand side is nondecreasing we deduce:
	\begin{equation*}
	\phi_\kappa(\lambda(t))\ge\max\left\{\frac 12 (w^2)_*(t),\phi_\kappa(\ell_0)\right\},\quad\text{for every }t\in[0,+\infty).
	\end{equation*}
	Since by (K2) the function $\phi_\kappa$ is invertible, we finally get that $\lambda(t)\ge\bar{\lambda}(t)$ for every $t\in[0,+\infty)$, where we denoted by $\bar{\lambda}$ the function in the right-hand side of \eqref{solution}.\par 
	Since by Lemma~\ref{cont} we know $\lambda$ is continuous on $[0,+\infty)$ and since by construction the same holds true for $\bar\lambda$, we conclude if we prove that $\lambda(t)=\bar\lambda(t)$ for every $t\in(0,+\infty)$. By contradiction let $\bar t\in(0,+\infty)$ be such that $\lambda(\bar t\,)>\bar\lambda(\bar t\,)$. By (K2) this in particular implies that $\displaystyle\kappa(\lambda(\bar t\,))>\frac 12\frac{w(\bar t\,)^2}{\lambda(\bar t\,)^2}$, and so by Lemma~\ref{constant} we get that $\lambda$ is constant around $\bar t$. Since $\bar\lambda$ is nondecreasing we can repeat this argument getting that $\lambda$ is constant on the whole $[0,\bar t\,]$. This is absurd since it implies:
	\begin{equation*}
		\phi_\kappa(\ell_0)=\phi_\kappa(\lambda(0))=\phi_\kappa(\lambda(\bar t\,))>\phi_\kappa(\bar\lambda(\bar t\,))\ge \phi_\kappa(\ell_0),
	\end{equation*}
	and so we conclude.
\end{proof}
\begin{rmk}\label{alsoinequality2}
	As in Remark~\ref{alsoinequality}, the conclusion of Proposition~\ref{explicitlambda} holds true replacing (eb) by \eqref{nonincr}. This will be useful in the proof of Proposition~\ref{ebquas}.
\end{rmk}
Finally we can state and prove the main result of this Section, regarding the equivalence between the two Definitions~\ref{enev} and \ref{quasev} and about existence and uniqueness of quasistatic evolutions.
\begin{thm}\label{exuniqquas}
	Assume (K3). Then a pair $(v,\lambda)$ is an energetic evolution if and only if it is a quasistatic evolution.\par\noindent 
	In particular, if we in addition assume (KW), the only quasistatic evolution $(\bar{v}, \bar{\lambda})$ is given by:
	\begin{itemize}
		\item $\displaystyle \bar v(t,x)=w(t)\left(1-\frac{x}{\bar\lambda(t)} \right)\chi_{[0,\bar\lambda(t)]}(x),\quad$ for every $(t,x)\in[0,+\infty)\times[0,+\infty)$,
		\item $\displaystyle\bar\lambda(t)=\phi_\kappa^{-1}\left(\max\left\{\frac 12 (w^2)_*(t),\phi_\kappa(\ell_0)\right\}\right),\quad$ for every $t\in[0,+\infty)$.
	\end{itemize}
\end{thm}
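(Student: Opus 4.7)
The plan is to split the proof into two parts: the equivalence of the two notions under (K3), and the explicit formula (with existence and uniqueness) under the additional (KW). I would first invoke Proposition~\ref{equiv} (which requires only (K1), implied by (K3)) to recast \emph{energetic} as the four conditions (o), (s1), (s2), (eb). Since (s1) coincides with (ii) in Definition~\ref{quasev}, the task reduces to matching (i) and (iii) with (o), (s2), (eb).

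For the direction quasistatic $\Rightarrow$ energetic I would proceed directly: (o) comes from (i) together with the first inequality in (iii); (s2) is the second inequality in (iii); and (eb) is obtained by differentiating $t\mapsto \tfrac12 w(t)^2/\lambda(t) + \int_{\ell_0}^{\lambda(t)} \kappa(\sigma)\d\sigma$ (legitimate since $\lambda\in AC$ by (i)) and using the complementarity in (iii) to cancel the $\dot\lambda$ contributions, leaving precisely the integrand $\dot w(t)\, w(t)/\lambda(t)$ that appears in (eb). Conversely, for energetic $\Rightarrow$ quasistatic, the inequality and monotonicity parts of (iii) follow at once from (o) and (s2), while the complementarity $[\kappa(\lambda)-w^2/(2\lambda^2)]\dot\lambda = 0$ comes from Lemma~\ref{constant}: wherever (s2) is strict, $\lambda$ is locally constant, so $\dot\lambda=0$ there. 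The delicate remaining point is (i). For this I would use Proposition~\ref{explicitlambda} (applicable under (K2), hence under (K3)) to represent $\lambda(t) = \phi_\kappa^{-1}(G(t))$ with $G(t) := \max\{\tfrac12 (w^2)_*(t), \phi_\kappa(\ell_0)\}$; since $w\in AC([0,T])$ gives $w^2\in AC([0,T])$, and the running-maximum operation $(\cdot)_*$ preserves AC (cf.~\cite{Almi}), $G\in AC([0,T])$. The pointwise positivity $\dot{\phi}_\kappa>0$ a.e.\ from (K3) is then the crucial input used to transfer AC through $\phi_\kappa^{-1}$ and conclude $\lambda\in AC([0,T])$.

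For the explicit formula under (KW), the first bound in (KW) guarantees that $\max\{\tfrac12 (w^2)_*(t), \phi_\kappa(\ell_0)\}$ lies in the range of $\phi_\kappa|_{[\ell_0,+\infty)}$, so $\bar\lambda$ is well-defined, and the second bound makes $\bar\lambda(0)=\ell_0$ consistent. I would verify (o), (s2), (eb) directly from the formula: monotonicity is built in; (s2) follows from $\phi_\kappa(\bar\lambda(t))\ge \tfrac12 w(t)^2$; and (eb) is obtained by the chain rule, using that $\dot{\bar\lambda}$ is supported on $\{w^2=(w^2)_*\}$, on which $\kappa(\bar\lambda) = w^2/(2\bar\lambda^2)$, so that the complementarity-type cancellation is automatic. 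Uniqueness of $\bar\lambda$ is then immediate from Proposition~\ref{explicitlambda}, and uniqueness of $\bar v$ follows from (ii).

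The main obstacle is the absolute continuity step in the energetic $\Rightarrow$ quasistatic direction: transferring AC through $\phi_\kappa^{-1}$ genuinely requires (K3) rather than only (K2), since a strictly increasing continuous function need not have an AC inverse. The pointwise positivity $\dot{\phi}_\kappa>0$ a.e.\ is the natural quantitative input, together with the observation that $\bar\lambda$ grows only on $\{w^2=(w^2)_*\}$, where (s2) is saturated and $\phi_\kappa(\bar\lambda) = w^2/2$; this should allow AC of $w^2$ to be pushed to AC of $\phi_\kappa(\bar\lambda)$, and from there back to AC of $\bar\lambda$ on the active set, while $\bar\lambda$ is constant outside it.
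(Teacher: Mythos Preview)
Your proposal is correct and follows essentially the same approach as the paper: reduce via Proposition~\ref{equiv} to (o), (s1), (s2), (eb), use Proposition~\ref{explicitlambda} together with (K3) to obtain absolute continuity of $\lambda$ (as a composition of nondecreasing absolutely continuous functions), and then handle the explicit candidate under (KW). The only cosmetic differences are that the paper derives the complementarity in (iii) by differentiating (eb) directly rather than via Lemma~\ref{constant}, and checks (iii) rather than (o), (s2), (eb) for the explicit pair $(\bar v,\bar\lambda)$; both variants are equivalent.
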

\begin{proof}
		Let $(v,\lambda)$ be an energetic evolution. By Proposition~\ref{equiv} we get $v$ satisfies (ii) and $\lambda$ satisfies (o), (s2) and (eb). Moreover by Proposition~\ref{explicitlambda} $\lambda$ is explicitely given by \eqref{solution} and hence by (K3) it is absolutely continuous on $[0,T]$ for every $T>0$, being composition of two nondecreasing absolutely continuous functions. Differentiating (eb) we now conclude that quasistatic Griffith's criterion (iii) holds true and so $(v,\lambda)$ is a quasistatic evolution.\par 
		On the other hand checking that any quasistatic evolution satisfy (o), (s1), (s2) and (eb) is straightforward, and hence by Proposition~\ref{equiv} the other implication is proved.\par 
		Let us now verify that, assuming (KW), the pair $(\bar v,\bar\lambda)$ is actually a quasistatic evolution. By (KW) $\bar\lambda$ is well defined and (i) is fulfilled. The only nontrivial thing to check is the validity of the third condition in the quasistatic Griffith's criterion (iii). We need to prove that for any differentiability point $\bar t\in(0,+\infty)$ of $\bar\lambda$  such that $\dot{\bar\lambda}(\bar t\,)>0$ it holds $\displaystyle \kappa(\bar\lambda(\bar t\,))=\frac 12\frac{w(\bar t\,)^2}{\bar\lambda(\bar t\,)^2}$. From the explicit expression of $\dot{\bar\lambda}$, namely:
		\begin{equation*}
			\dot{\bar\lambda}(t)=\frac{w(t)\dot{w}(t)}{\dot{\phi_\kappa}(\bar\lambda(t))}\chi_{\{w^2=(w^2)_*> 2\phi_\kappa(\ell_0)\}}(t),\quad\quad\text{for a.e. }t\in(0,+\infty),
		\end{equation*}
		we deduce that in $t=\bar t$ we must have $w(\bar t\,)^2=(w^2)_*(\bar t\,)> 2\phi_\kappa(\ell_0)$ and so it holds:
		\begin{equation*}
			\phi_\kappa(\bar\lambda(\bar t\,))=\max\left\{\frac 12 (w^2)_*(\bar t\,),\phi_\kappa(\ell_0)\right\}=\frac 12 w(\bar t\,)^2,
		\end{equation*}
		and we conclude.
\end{proof}

	\section{Energy estimates}\label{sec3}
	In this Section we provide useful energy estimates for the pair of dynamic evolutions $(\ueps,\elleps)$ given by Theorem~\ref{exuniq}. These estimates will be used in the next Section to analyse the limit as $\eps\to 0^+$ of both $\ueps$ and $\elleps$. From now on we always assume that the positive toughness $\kappa$ belongs to $\widetilde{C}^{0,1}([\ell_0,+\infty))$. When needed we will also require the following additional assumptions on the data:
	\begin{itemize}
		\item[(H1)] the families $\{\weps\}_{\eps>0}$, $\{u_0^\eps\}_{\eps>0}$, $\{\eps u_1^\eps\}_{\eps>0}$ are bounded in $\widetilde H^1(0,+\infty)$, $H^1(0,\ell_0)$ and\linebreak$L^2(0,\ell_0)$, respectively;
		\item[(K0)] the function $\kappa$ is not integrable in $[\ell_0,+\infty)$.
	\end{itemize}
\begin{rmk}\label{unifwrmk}
	Whenever we assume (H1), we denote by $\epsn$ a subsequence for which we have:
	\begin{equation}\label{unifw}
		\wepsn\rightharpoonup {w}\text{ in }\widetilde{H}^1(0,+\infty)\quad\quad\text{and}\quad\quad \wepsn\to w\text{ uniformly in }[0,T]\text{ for every }T>0,
	\end{equation}
	for a suitable $w\in\widetilde{H}^1(0,+\infty)$. This sequence can be obtained by weak compactness and Sobolev embedding. By abuse of notation we will not relabel further subsequences.
\end{rmk}
The first step is obtaining an energy bound uniform in $\eps$ from the energy-dissipation balance \eqref{eb}. As one can see, we must deal with the work of the external loading $\mc W^\eps$, so we need to find a way to handle the boundary term $\uepsx(\cdot,0)$. Next Lemma shows how we can recover it via an integration by parts. 
	\begin{lemma}\label{intpartslemma}
		Let the function $h\in C^\infty([0,+\infty))$ satisfy $h(0)=1$, $0\le h(x)\le 1$ for every $x\in[0,+\infty)$ and $h(x)=0$ for every $x\ge\ell_0$. Then the following equality holds true for every $t\in[0,+\infty)$:
		\begin{equation}\label{intparts}
		\!\!\!\!	\begin{aligned}
			&\quad\quad\frac 12\int_{0}^{t}\Big(\eps^2\wepsd(\tau)^2+\uepsx(\tau,0)^2\Big)\d\tau\\
			&=-\frac 12 \int_{0}^{t}\int_{0}^{\ell_0}\dot{h}(\sigma)\Big(\eps^2\uepst(\tau,\sigma)^2+\uepsx(\tau,\sigma)^2\Big)\d\sigma\d\tau-\nu\int_{0}^{t}\int_{0}^{\ell_0}h(\sigma)\eps\uepst(\tau,\sigma)\uepsx(\tau,\sigma)\d\sigma\d\tau\\
			&\quad-\eps\left(\int_{0}^{\ell_0}h(\sigma)\eps\uepst(t,\sigma)\uepsx(t,\sigma)\d\sigma-\int_{0}^{\ell_0}h(\sigma)\eps u^\eps_1(\sigma)\dot{u}^\eps_0(\sigma)\d\sigma\right).
			\end{aligned}
		\end{equation}
	\end{lemma}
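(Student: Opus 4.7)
The plan is to derive \eqref{intparts} as a multiplier identity, testing the damped wave equation against $h(x)\uepsx(\tau,x)$. Since by construction $h$ is supported in $[0,\ell_0]$ and $\elleps(\tau)\ge\ell_0$, we can integrate over the fixed cylinder $(0,t)\times(0,\ell_0)\subset\Omega^\eps$ without ever touching the moving debonding front, which is the geometric reason the formula is clean.

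First I would assume enough regularity on the data $(\weps,u_0^\eps,u_1^\eps)$ so that Remark~\ref{regularity} applies and $\ueps\in\widetilde H^2(\Omega^\eps)$. Under this hypothesis, multiplying $\eps^2\uepst_t-\uepsx_x+\nu\eps\uepst=0$ by $h(x)\uepsx$ and integrating gives three contributions. For the inertial term, an integration by parts in $\tau$ produces the traces $\eps^2\int_0^{\ell_0}h\,\uepst\uepsx$ at $\tau=0,t$ together with the interior term $-\eps^2\int_0^t\!\!\int_0^{\ell_0}h\,\uepst\uepsx_t$; rewriting $\uepst\uepsx_t=\tfrac12\partial_x(\uepst)^2$ and integrating by parts in $x$ then yields a boundary contribution at $x=0$ which, using $\uepst(\tau,0)=\wepsd(\tau)$, equals $\tfrac{\eps^2}{2}\int_0^t\wepsd(\tau)^2\d\tau$, plus an interior $\tfrac{\eps^2}{2}\int\dot h(\uepst)^2$. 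For the elastic term, I write $\uepsx\uepsx_x=\tfrac12\partial_x(\uepsx)^2$ and integrate by parts in $x$; the contribution $h(\ell_0)=0$ kills the right endpoint, while $h(0)=1$ gives $+\tfrac12\int_0^t\uepsx(\tau,0)^2\d\tau$, plus an interior $\tfrac12\int\dot h(\uepsx)^2$. The friction term contributes the $\nu\eps$ double integral as it stands. Collecting everything and solving for $\tfrac12\int_0^t(\eps^2\wepsd^2+\uepsx(\cdot,0)^2)\d\tau$ yields exactly \eqref{intparts}.

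To remove the auxiliary regularity, I would approximate $(\weps,u_0^\eps,u_1^\eps)$ by a sequence of data $(w_n^\eps,u_{0,n}^\eps,u_{1,n}^\eps)$ of the regularity required by Remark~\ref{regularity}, satisfying both \eqref{compatibility0} and the first-order compatibility conditions, and converging to the original data in $\widetilde H^1(0,+\infty)\times H^1(0,\ell_0)\times L^2(0,\ell_0)$. The continuous dependence result in Theorem~\ref{contdependence} then gives $\uepsn\to\ueps$ in $C^0([0,T];H^1(0,+\infty))\cap C^1([0,T];L^2(0,+\infty))$ and $(\uepsn)_x(\cdot,0)\to\uepsx(\cdot,0)$ in $L^2(0,T)$, which is precisely what is needed to pass to the limit in every term of \eqref{intparts}.

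The main obstacle I anticipate is the density step: the compatibility conditions of Remark~\ref{regularity} at the tip $\ell_0$ are nonlinear (the dichotomy involving $\kappa(\ell_0)$ and the sign of $\dot u_0^\eps(\ell_0)/u_1^\eps(\ell_0)$), so one cannot just mollify naively. A safe workaround, exploiting once more that $h$ is supported away from $\ell_0$, is to modify the approximating data only on a shrinking neighbourhood of $\ell_0$ so that the simpler alternative $u_{1,n}^\eps(\ell_0)=0$, $\dot u_{0,n}^\eps(\ell_0)=0$ is met, and keep them untouched near $x=0$ where compatibility with $\wepsd(0)$ must hold. The calculation itself is standard; all the care goes into this approximation and into verifying the convergence of each quadratic term on the right-hand side of \eqref{intparts}, which follows from the strong convergences above.
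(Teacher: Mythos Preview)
Your proposal is correct and follows essentially the same route as the paper: a formal multiplier computation with $h(x)\uepsx$ over the fixed strip $(0,t)\times(0,\ell_0)$, followed by an approximation argument using Remark~\ref{regularity} and Theorem~\ref{contdependence}. The only cosmetic difference is that the paper organises the formal step the other way round, starting from the boundary quantity $\tfrac12\int_0^t(\eps^2\wepsd^2+\uepsx(\cdot,0)^2)\,\d\tau$ and rewriting it as $-\tfrac12\int_0^t\int_0^{\ell_0}\partial_\sigma[h(\eps^2\uepst^2+\uepsx^2)]$ before expanding; your version (test the equation, collect terms) is the same integration by parts read backwards. For the density step at the tip, the paper does not force $\dot u_{0,n}^\eps(\ell_0)=0$ but instead leaves $\dot u_{0,n}^\eps(\ell_0)$ free and adjusts $u_{1,n}^\eps(\ell_0)$ via the explicit formula $u_{1,n}^\eps(\ell_0)=-\mathrm{sign}(\dot u_{0,n}^\eps(\ell_0))\eps^{-1}\sqrt{\dot u_{0,n}^\eps(\ell_0)^2-2\kappa(\ell_0)}$ when the second alternative is needed; your simpler choice of imposing both $u_{1,n}^\eps(\ell_0)=0$ and $\dot u_{0,n}^\eps(\ell_0)=0$ also works and avoids the case distinction.
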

	\begin{proof}
		We start with a formal proof, assuming that all the computation we are doing are allowed, and then we make it rigorous via an approximation argument. Performing an integration by parts we deduce:
		\begingroup\allowdisplaybreaks
		\begin{align*}
			&\quad\quad \frac 12\int_{0}^{t}\Big(\eps^2\wepsd(\tau)^2+\uepsx(\tau,0)^2\Big)\d\tau=\frac 12\int_{0}^{t}\Big(\eps^2\uepst(\tau,0)^2+\uepsx(\tau,0)^2\Big)\d\tau\\
			&=-\frac 12\int_{0}^{t}h(0)\Big(\eps^2\uepst(\tau,0)^2+\uepsx(\tau,0)^2\Big)(-1)\d\tau\\
			&=-\frac 12\int_{0}^{t}\int_{0}^{\ell_0}\frac{\partial}{\partial \sigma}\Big[h(\cdot)\Big(\eps^2\uepst(\tau,\cdot)^2+\uepsx(\tau,\cdot)^2\Big)\Big](\sigma)\d\sigma\d\tau\\
			&=-\frac 12\int_{0}^{t}\int_{0}^{\ell_0}\dot{h}(\sigma)\Big(\eps^2\uepst(\tau,\sigma)^2+\uepsx(\tau,\sigma)^2\Big)\d\sigma\d\tau\\
			&\quad-\int_{0}^{t}\int_{0}^{\ell_0}h(\sigma)\Big(\eps^2\uepst(\tau,\sigma)u^\eps_{tx}(\tau,\sigma)+\uepsx(\tau,\sigma)u^\eps_{xx}(\tau,\sigma)\Big)\d\sigma\d\tau=(*).
		\end{align*}
		\endgroup
		Exploiting the fact that $\ueps$ solves problem \eqref{problem2} we hence get:
		\begingroup\allowdisplaybreaks
		\begin{align*}
			(*)&=-\frac 12\int_{0}^{t}\int_{0}^{\ell_0}\dot{h}(\sigma)\Big(\eps^2\uepst(\tau,\sigma)^2+\uepsx(\tau,\sigma)^2\Big)\d\sigma\d\tau\\
			&\quad-\nu \int_{0}^{t}\int_{0}^{\ell_0}h(\sigma)\eps\uepst(\tau,\sigma)\uepsx(\tau,\sigma)\d\sigma\d\tau\\
			&\quad-\eps\int_{0}^{t}\int_{0}^{\ell_0}h(\sigma)\Big(\eps u^\eps_{tt}(\tau,\sigma)\uepsx(\tau,\sigma)+\eps\uepst(\tau,\sigma)u^\eps_{tx}(\tau,\sigma)\Big)\d\sigma\d\tau.
		\end{align*}
		\endgroup
		Now we conclude since it holds:
		\begingroup\allowdisplaybreaks
		\begin{align*}
			&\quad\int_{0}^{t}\int_{0}^{\ell_0}h(\sigma)\Big(\eps u^\eps_{tt}(\tau,\sigma)\uepsx(\tau,\sigma)+\eps\uepst(\tau,\sigma)u^\eps_{tx}(\tau,\sigma)\Big)\d\sigma\d\tau\\
			&=\int_{0}^{\ell_0}h(\sigma)\int_{0}^{t}\frac{\partial}{\partial\tau}\Big[\eps\uepst(\cdot,\sigma)\uepsx(\cdot,\sigma)\Big](\tau)\d\tau\d\sigma\\
			&=\int_{0}^{\ell_0}h(\sigma)\eps\uepst(t,\sigma)\uepsx(t,\sigma)\d\sigma-\int_{0}^{\ell_0}h(\sigma)\eps u^\eps_1(\sigma)\dot{u}^\eps_0(\sigma)\d\sigma.
		\end{align*}
		\endgroup
		All the previous computations are rigorous if $\ueps$ belongs to $\widetilde H^2(\Omega^\eps)$, which is not the case. To overcome this lack of regularity we perform an approximation argument, exploiting Remark~\ref{regularity} and Theorem~\ref{contdependence}.\par 
		Let us consider a sequence $\{{u_0^\eps}_n\}_{n\in\enne}\subset H^2(0,\ell_0)$ such that ${u_0^\eps}_n(0)=u_0^\eps(0)$, ${u_0^\eps}_n(\ell_0)=0$ and converging to $u_0^\eps$ in $H^1(0,\ell_0)$ as $n\to +\infty$; then we pick a sequence $\{w^\eps_n\}_{n\in\enne}\subset\widetilde H^2(0,+\infty)$ such that $w^\eps_n(0)=\weps(0)$ and converging to $w^\eps$ in $\widetilde H^1(0,+\infty)$ as $n\to +\infty$; finally we take another sequence $\{{u_1^\eps}_n\}_{n\in\enne}\subset H^1(0,\ell_0)$ converging to $u_1^\eps$ in $L^2(0,\ell_0)$  as $n\to +\infty$ and satisfying:
		\begin{equation*}
			{u_1^\eps}_n(0)=\dot{w}^\eps_n(0),\quad {u_1^\eps}_n(\ell_0)=\begin{cases}
			-\frac{\text{sign}\big({\dot{u}_{0_n}^\eps}(\ell_0)\big)}{\eps}\sqrt{{\dot{u}_{0_n}^\eps}(\ell_0)^2
				-2\kappa(\ell_0)},&\text{if }{\dot{u}_{0_n}^\eps}(\ell_0)^2>2\kappa(\ell_0),\\
			0,&\text{otherwise}.
			\end{cases}
		\end{equation*}	
		Denoting by $(u^\eps_n,\ell^\eps_n)$ the solution of coupled problem \eqref{problem2}\&\eqref{Griffith} related to these data, we deduce by Remark~\ref{regularity} that $u^\eps_n$ belongs to $H^2(\Omega^\eps_T)$, and so by previous computations \eqref{intparts} holds true for it.  By Theorem~\ref{contdependence} equality \eqref{intparts} passes to the limit as $n\to +\infty$ and hence we conclude.	 
	\end{proof}\noindent
Thanks to previous Lemma we are able to prove the following energy bound:
\begin{prop}\label{energyboundprop}
	Assume (H1). Then for every $T>0$ there exists a positive constant $C_T>0$ such that for every $\eps\in(0,1/2)$ it holds:
	\begin{equation}\label{energybound}
		\mc E^\eps(t)+\mc A^\eps(t)+\int_{\ell_0}^{\elleps(t)}\kappa(\sigma)\d \sigma\le C_T,\quad\quad\text{for every }t\in[0,T],
	\end{equation}
	where $\mc E^\eps$ and $\mc A^\eps$ are the energies defined in \eqref{kinpot} and \eqref{frict}.
\end{prop}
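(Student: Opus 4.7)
The starting point is the energy-dissipation balance \eqref{eb}, which rewrites as
\[
\mc E^\eps(t)+\mc A^\eps(t)+\int_{\ell_0}^{\elleps(t)}\kappa(\sigma)\d\sigma=\mc E^\eps(0)-\mc W^\eps(t).
\]
By assumption (H1) the term $\mc E^\eps(0)$ is bounded uniformly in $\eps$, so the whole game is to control $\mc W^\eps(t)=\int_0^t\wepsd(\tau)\uepsx(\tau,0)\d\tau$. Cauchy--Schwarz plus Young's inequality gives, for any $\delta>0$,
\[
|\mc W^\eps(t)|\le \frac{1}{2\delta}\int_0^t\wepsd(\tau)^2\d\tau+\frac{\delta}{2}\int_0^t\uepsx(\tau,0)^2\d\tau,
\]
and the first integral is bounded uniformly in $\eps$ again by (H1). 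Thus the whole estimate reduces to controlling $\int_0^t\uepsx(\tau,0)^2\d\tau$, and this is exactly what Lemma~\ref{intpartslemma} is designed for.

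I would then fix once and for all a cut-off $h\in C^\infty([0,+\infty))$ as in Lemma~\ref{intpartslemma} and estimate each term on the right-hand side of \eqref{intparts}. The first integral is bounded by $\|\dot h\|_\infty\int_0^t\mc E^\eps(\tau)\d\tau$; the second is bounded, via another Young inequality, by $\nu\|h\|_\infty\int_0^t\mc E^\eps(\tau)\d\tau$; the boundary term at time $t$ is bounded by $\eps\|h\|_\infty \mc E^\eps(t)$; and the last term involving the initial data is bounded uniformly in $\eps$ by (H1), after noticing that $\|\eps u_1^\eps\|_{L^2}\|\dot u_0^\eps\|_{L^2}$ is controlled by hypothesis. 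Collecting, I obtain
\[
\int_0^t\uepsx(\tau,0)^2\d\tau\le C_1+C_2\int_0^t\mc E^\eps(\tau)\d\tau+C_3\eps\,\mc E^\eps(t),
\]
for constants $C_1,C_2,C_3$ independent of $\eps$ (and depending on $h$ and on the bounds from (H1), but not on the final time in a bad way since $\weps$ is only bounded in $\widetilde H^1$).

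Plugging this back into the energy balance I get
\[
\mc E^\eps(t)+\mc A^\eps(t)+\int_{\ell_0}^{\elleps(t)}\kappa(\sigma)\d\sigma\le C_T'+\frac{\delta C_3}{2}\eps\,\mc E^\eps(t)+\frac{\delta C_2}{2}\int_0^t\mc E^\eps(\tau)\d\tau.
\]
Now I choose $\delta$ small enough that $\delta C_3/2<1$, which together with the restriction $\eps<1/2$ lets me absorb the term $\eps\,\mc E^\eps(t)$ into the left-hand side (using that $\mc A^\eps$ and the toughness integral are nonnegative). Finally, a standard Gronwall argument applied to the resulting integral inequality yields the uniform bound $\mc E^\eps(t)\le C_T$ on $[0,T]$, and hence the full bound \eqref{energybound}.

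The genuine difficulty is really in Lemma~\ref{intpartslemma}: the boundary term $\uepsx(\cdot,0)$ is a priori only in $L^2$ with no control, and the wave equation is only in the distributional sense, so the integration-by-parts identity is the essential mechanism that trades this boundary quantity for interior energy quantities plus an $O(\eps)$ remainder. Once that identity is granted, the rest of the proof is a fairly routine Young/Gronwall combination, the only care being to keep track of which constants depend on $T$ and to use the restriction $\eps<1/2$ at the absorption step.
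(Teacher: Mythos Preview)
Your argument is correct and follows essentially the same route as the paper: start from the energy-dissipation balance, bound the work term via Young's inequality, invoke Lemma~\ref{intpartslemma} to trade the boundary integral $\int_0^t\uepsx(\tau,0)^2\d\tau$ for interior energy terms plus an $\eps\,\mc E^\eps(t)$ remainder, absorb the latter using $\eps<1/2$, and close with Gronwall. The only cosmetic difference is that the paper takes $\delta=1$ directly in Young's inequality (the coefficient of $\eps\,\mc E^\eps(t)$ already comes out small enough for absorption), whereas you introduce an extra parameter $\delta$ that is in fact unnecessary.
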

	\begin{proof}
		We fix $T>0$, $t\in[0,T]$, $\eps\in(0,1/2)$ and by using the energy-dissipation balance \eqref{eb} we estimate:
		\begin{align*}
			&\quad\,\,\mc E^\eps(t)+\mc A^\eps(t)+\int_{\ell_0}^{\elleps(t)}\kappa(\sigma)\d \sigma=\mc E^\eps(0)-\mc W^\eps(t)\\
			&\le \mc E^\eps(0)+\frac 12\int_{0}^{t}\dot{w}^\eps(\tau)^2\d\tau +\frac 12\int_{0}^{t}\uepsx(\tau,0)^2\d\tau\\
			&=\mc E^\eps(0)+\frac{1-\eps^2}{2}\int_{0}^{t}\dot{w}^\eps(\tau)^2\d\tau +\frac 12\int_{0}^{t}\Big(\eps^2\dot{w}^\eps(\tau)^2+\uepsx(\tau,0)^2\Big)\d\tau=(*).
		\end{align*} 
		By Lemma~\ref{intpartslemma} and by applying Young's inequality, we can continue the estimate getting:
		\begin{align*}
			(*)&\le\quad\mc E^\eps(0)+\frac{1-\eps^2}{2}\int_{0}^{t}\dot{w}^\eps(\tau)^2\d\tau\\
			&\quad+\left(\max\limits_{x\in[0,\ell_0]}|\dot{h}(x)|+\nu\right)\int_{0}^{t}\mc E^\eps(\tau)\d\tau+\eps\mc E^\eps(t)+\eps\mc E^\eps(0).
		\end{align*}
		We conclude by means of Gr\"onwall Lemma and exploiting (H1).
	\end{proof}\noindent
As an immediate Corollary we have:
\begin{cor}\label{ellbd}
	Assume (H1) and (K0). Then for every $T>0$ there exists a positive constant $L_T>0$ such that $\elleps(T)\le L_T$ for every $\eps\in(0, 1/2)$.
\end{cor}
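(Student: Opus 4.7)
The plan is to read off the bound directly from Proposition~\ref{energyboundprop}, using hypothesis (K0) to convert a bound on a $\kappa$-weighted integral into a bound on the length of the interval of integration.

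More precisely, fix $T>0$ and $\eps\in(0,1/2)$. Since every term in \eqref{energybound} is nonnegative, Proposition~\ref{energyboundprop} yields in particular
\begin{equation*}
\int_{\ell_0}^{\elleps(T)}\kappa(\sigma)\d\sigma \le C_T,
\end{equation*}
with $C_T$ independent of $\eps$. Define the primitive $K\colon[\ell_0,+\infty)\to[0,+\infty)$ by $K(x):=\int_{\ell_0}^{x}\kappa(\sigma)\d\sigma$. Since $\kappa$ is positive, $K$ is continuous and strictly increasing, hence a bijection onto its image; by the non-integrability assumption (K0) one has $\lim_{x\to+\infty}K(x)=+\infty$, so $K\colon[\ell_0,+\infty)\to[0,+\infty)$ is actually a homeomorphism. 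Therefore $K^{-1}$ is well defined on all of $[0,+\infty)$, and the previous inequality rewrites as $\elleps(T)\le K^{-1}(C_T)$.

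Setting $L_T:=K^{-1}(C_T)$, which depends only on $T$, $\nu$, the toughness $\kappa$ and the bounds provided by (H1), but not on $\eps$, we obtain the desired uniform bound $\elleps(T)\le L_T$. There is essentially no obstacle: the nontrivial work has already been absorbed into Proposition~\ref{energyboundprop}, and (K0) is precisely what is needed to invert $K$ on the whole positive half-line so that no a priori information on the range of $\elleps(T)$ is required.
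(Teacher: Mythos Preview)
Your proof is correct and is exactly the argument the paper has in mind: the corollary is stated as an ``immediate'' consequence of Proposition~\ref{energyboundprop}, and your use of (K0) to invert the primitive $K$ of $\kappa$ is the natural way to extract the bound on $\elleps(T)$ from the inequality $\int_{\ell_0}^{\elleps(T)}\kappa(\sigma)\d\sigma\le C_T$.
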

In order to improve the energy bound given by Proposition~\ref{energyboundprop} we exploit the classical exponential decay of the energy for a solution to the damped wave equation. Following the ideas of \cite{MisGo} we adapt their argument to our model in which the domain of the equation changes in time. For this aim we introduce the modified energy:
	\begin{equation*}
		\widetilde{\mc E}^\eps(t):=\frac 12 \int_{0}^{\elleps(t)}\eps^2\uepst(t,\sigma)^2\d\sigma+\frac 12\int_{0}^{\elleps(t)}\big(\uepsx(t,\sigma)-r^\eps_x(t,\sigma)\big)^2\d\sigma,\quad \text{for }t\in[0,+\infty),
	\end{equation*}
	where $r^\eps(t,x)$ is the affine function connecting the points $(0,\weps(t))$ and $(\elleps(t),0)$, namely: 
	\begin{equation}\label{reps}
		 r^\eps(t,x):=\weps(t)\left(1-\frac{x}{\elleps(t)}\right)\chi_{[0,\elleps(t)]}(x),\quad \text{for }(t,x)\in[0,+\infty)\times[0,+\infty).
	\end{equation}
		The main result of this Section is the following decay estimate:
	\begin{thm}\label{mainestimate}
		Assume (H1) and (K0) and let the parameter $\nu$ be \textbf{positive}. Then for every $T>0$ there exists a constant $C_T>0$ such that for every $t\in[0,T]$ and $\eps\in(0,1/2)$ one has:
		\begin{equation}\label{estimatemain}
		\widetilde{\mc E}^\eps(t)\le 4\widetilde{\mc E}^\eps(0)e^{-m\frac{t}{\eps}}+C_T\int_{0}^{t}\big(\ellepsd(\tau)+\wepsd(\tau)^2+\uepsx(\tau,0)^2+1\big)e^{-m\frac{t-\tau}{\eps}}\d\tau,
		\end{equation}
		where $\displaystyle m=m(\nu,T):=\frac 12 \min\left\{\frac{1}{2\mu^0_T},\,\frac\nu 2,\,\frac{1}{\mu_T^0+\mu^1_T} \right\}>0$ and $\mu^0_T $, $\mu^1_T$ are defined as follows: 
		\begin{equation}\label{muzerouno}
		\mu_T^0:=\frac{L_T}{\pi},\quad\text{ and }\quad \mu_T^1:=\nu\left(\frac{L_T}{\pi}\right)^2,
		\end{equation}
		with $L_T$ given by Corollary~\ref{ellbd}.
	\end{thm}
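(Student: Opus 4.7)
The plan is to adapt the Lyapunov argument of \cite{MisGo} to our moving-boundary setting. First I would introduce $v^\eps:=\ueps-r^\eps$ with $r^\eps$ as in \eqref{reps}, so that $v^\eps$ vanishes at both endpoints $x=0$ and $x=\elleps(t)$ and solves the damped wave equation with explicit forcing $-\eps^2 r^\eps_{tt}-\nu\eps r^\eps_t$; using the explicit expression of $r^\eps$, Corollary~\ref{ellbd} and Proposition~\ref{energyboundprop}, this forcing is controlled in $L^2(0,\elleps(t))$ by $C_T(\wepsd(t)^2+\ellepsd(t)+1)$ (exploiting $\eps\ellepsd<1$ to trade $\ellepsd(t)^2$ against $\ellepsd(t)/\eps$ up to a bounded factor). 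Rewriting $\widetilde{\mc E}^\eps(t)=\tfrac12\int_0^{\elleps(t)}\bigl(\eps^2\uepst^2+(v^\eps_x)^2\bigr)\d\sigma$, the crucial advantage of the subtraction is that $v^\eps$ enjoys a double Dirichlet condition on the time-varying interval, so Poincaré's inequality yields $\|v^\eps(t,\cdot)\|_{L^2}\le\mu_T^0\|v^\eps_x(t,\cdot)\|_{L^2}$ with $\mu_T^0=L_T/\pi$, i.e.\ precisely the first constant in \eqref{muzerouno}.

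Next I would differentiate $\widetilde{\mc E}^\eps$ in time. Using the damped wave equation for $\ueps$, the boundary conditions $\ueps(t,\elleps(t))=0$ and $\ueps(t,0)=\weps(t)$ (and the consequent identity $\uepst(t,\elleps(t))=-\ellepsd(t)\uepsx(t,\elleps(t))$), together with the Leibniz rule for the moving interval, one obtains a schematic formula
\[
\frac{\d}{\d t}\widetilde{\mc E}^\eps(t)=-\frac{\nu}{\eps}\int_0^{\elleps(t)}\!\!\eps^2\uepst(t,\sigma)^2\d\sigma-\frac{1}{2\eps}\ellepsd(t)B^\eps(t)+J^\eps(t),
\]
where the boundary term $B^\eps(t)\ge 0$ is the dissipative contribution of the moving front (the same object producing the toughness term in \eqref{eb}), while $J^\eps(t)$ collects the $L^2$-pairing of $\eps\uepst$ with the forcing plus terms at $x=0$ of type $\wepsd(t)\uepsx(t,0)$ and time derivatives of $r^\eps_x$. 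Each piece of $J^\eps$ is estimated by $C_T\bigl(\wepsd(t)^2+\ellepsd(t)+\uepsx(t,0)^2+1\bigr)+\delta\,\widetilde{\mc E}^\eps(t)/\eps$ via Young's inequality.

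To recover decay of the elastic part $\|v^\eps_x\|_{L^2}^2$, which friction alone does not control, I would follow \cite{MisGo} and introduce the multiplier functional
\[
\mc F^\eps(t):=\eps\int_0^{\elleps(t)}\uepst(t,\sigma)\,v^\eps(t,\sigma)\d\sigma.
\]
Differentiating and integrating by parts uses the equation to turn $\int v^\eps_{xx}v^\eps$ into $-\|v^\eps_x\|_{L^2}^2$; the boundary terms at $x=\elleps(t)$ vanish because $v^\eps$ does, while the inertial and friction cross-terms together with the forcing are absorbed via Young's inequality and the Poincaré bound above, which produces exactly the combination $\nu(\mu_T^0)^2=\mu_T^1$ appearing in \eqref{muzerouno}. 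Setting the Lyapunov candidate $\mc L^\eps(t):=\widetilde{\mc E}^\eps(t)+\eta\,\mc F^\eps(t)$ with $\eta>0$ small enough that $\tfrac12\widetilde{\mc E}^\eps\le\mc L^\eps\le 2\widetilde{\mc E}^\eps$ (this is where the factor $4$ in the statement comes from), a balancing of the three rates $1/(2\mu_T^0)$, $\nu/2$ and $1/(\mu_T^0+\mu_T^1)$—corresponding to dissipation of the spatial $L^2$ norm, the kinetic energy, and the mixed Poincaré--friction term respectively—leads to
\[
\frac{\d}{\d t}\mc L^\eps(t)\le -\frac{2m}{\eps}\mc L^\eps(t)+C_T\bigl(\ellepsd(t)+\wepsd(t)^2+\uepsx(t,0)^2+1\bigr),
\]
and Grönwall's lemma together with the two-sided equivalence between $\mc L^\eps$ and $\widetilde{\mc E}^\eps$ yields \eqref{estimatemain}.

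The main obstacle will be a clean treatment of the moving-boundary contributions at $x=\elleps(t)$: in $\frac{\d}{\d t}\widetilde{\mc E}^\eps$ and $\frac{\d}{\d t}\mc F^\eps$ they involve $\ellepsd(t)$ times traces of $\uepsx$ and $v^\eps_x$ which are not controlled by the bulk energy, so one must show either that they enter with the favourable sign (as for $B^\eps$) or that they can be absorbed into the forcing $C_T\ellepsd(t)$ by invoking Griffith's criterion \eqref{Griffith} and the bound on $\kappa(\elleps)$ from Proposition~\ref{energyboundprop}. A preliminary approximation argument as in Lemma~\ref{intpartslemma}, based on Remark~\ref{regularity} and Theorem~\ref{contdependence}, will be needed to legitimise the pointwise-in-time differentiation of $\widetilde{\mc E}^\eps$ and $\mc F^\eps$ at the present $\widetilde H^1$ regularity.
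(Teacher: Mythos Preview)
Your strategy is the same as the paper's (a Lyapunov argument \`a la \cite{MisGo} built from $\widetilde{\mc E}^\eps$ plus a multiplier functional involving $v^\eps=\ueps-r^\eps$), and it is sound. The paper's execution is cleaner than your plan in two specific places, and this removes precisely what you flag as the ``main obstacle''. First, rather than differentiating $\widetilde{\mc E}^\eps$ directly and handling the moving-boundary contribution at $x=\elleps(t)$ via Griffith's criterion, the paper uses the algebraic identity $\widetilde{\mc E}^\eps(t)=\mc E^\eps(t)-\tfrac12\,\weps(t)^2/\elleps(t)$ and the already established energy--dissipation balance \eqref{eb} for $\dot{\mc E}^\eps$; the moving-boundary term then appears directly as $-\kappa(\elleps)\ellepsd\le 0$ and is dropped, giving \eqref{este} with no trace manipulation. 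Second, the paper's multiplier functional carries an extra quadratic piece,
\[
\widetilde{\mc F}^\eps(t)=\int_{0}^{\elleps(t)}\eps^2\uepst\,v^\eps\,\d\sigma+\frac{\nu\eps}{2}\int_{0}^{\elleps(t)}(v^\eps)^2\,\d\sigma,
\]
whose time derivative produces $+\nu\eps\int v^\eps\uepst$ and exactly cancels the friction cross-term coming from the equation; only the harmless forcing $-\nu\eps\int r^\eps_t v^\eps$ survives. In the paper $\mu^1_T$ therefore enters through the upper comparison $\widetilde{\mc F}^\eps\le\eps(\mu^0_T+\mu^1_T)\widetilde{\mc E}^\eps$, not through the derivative estimate. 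Note also that for $\widetilde{\mc F}^\eps$ (and for your $\mc F^\eps$) there is in fact no moving-boundary difficulty: since $v^\eps(t,\elleps(t))=0$, both the Leibniz term and the boundary term from integrating by parts vanish identically. Finally, the paper's Lyapunov is $\widetilde{\mc D}^\eps=\widetilde{\mc E}^\eps+\frac{2m}{\eps}\widetilde{\mc F}^\eps$ (your $\eta$ corresponds to $2m$, once the extra $\eps$ in $\widetilde{\mc F}^\eps$ is accounted for), and the Gr\"onwall rate is $m/\eps$, not $2m/\eps$: one factor of $2$ is lost when passing from $-\frac{2m}{\eps}\widetilde{\mc E}^\eps$ to $-\frac{m}{\eps}\widetilde{\mc D}^\eps$ via the upper comparison $\widetilde{\mc D}^\eps\le 2\widetilde{\mc E}^\eps$.
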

\begin{rmk}
	Estimate \eqref{estimatemain} actually still holds true for $\nu=0$, but in this case $m=0$ and so the inequality becomes trivial and useless.
\end{rmk}
To prove this Theorem we will need several Lemmas. As before we always assume that $\eps\in(0,1/2)$.
	\begin{lemma}
		Assume (H1). Then for every $T>0$ the modified energy $\widetilde{\mc E}^\eps$ is absolutely continuous on $[0,T]$ and the following inequality holds true for a.e. $t\in(0,T)$:
		\begin{equation}\label{este}
			\dot{\widetilde{\mc E}^\eps}(t)\le -\nu\int_{0}^{\elleps(t)}\eps\uepst(t,\sigma)^2\d\sigma+C_T\big(\ellepsd(t)+\wepsd(t)^2+\uepsx(t,0)^2+1\big),
		\end{equation}
		where $C_T$ is a positive constant depending on $T$ but independent of $\eps$.
	\end{lemma}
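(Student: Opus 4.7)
The strategy is to express $\widetilde{\mc E}^\eps$ as a perturbation of the standard energy $\mc E^\eps$, for which absolute continuity and derivative are known from the energy-dissipation balance \eqref{eb}. Expanding the square and using the boundary conditions $\ueps(t,0)=\weps(t)$ and $\ueps(t,\elleps(t))=0$ together with the explicit form $r^\eps_x(t,\sigma)=-\weps(t)/\elleps(t)$ on $[0,\elleps(t)]$, one checks that
\begin{equation*}
\int_{0}^{\elleps(t)}\uepsx(t,\sigma)r^\eps_x(t,\sigma)\d\sigma=\frac{\weps(t)^2}{\elleps(t)}=\int_{0}^{\elleps(t)}r^\eps_x(t,\sigma)^2\d\sigma,
\end{equation*}
so that the identity
\begin{equation*}
\widetilde{\mc E}^\eps(t)=\mc E^\eps(t)-\frac 12\frac{\weps(t)^2}{\elleps(t)}
\end{equation*}
holds for every $t\ge 0$. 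This reduces everything to understanding the two summands on the right.

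Next I would establish absolute continuity. From \eqref{eb} we have $\mc E^\eps(t)=\mc E^\eps(0)-\mc A^\eps(t)-\int_{\ell_0}^{\elleps(t)}\kappa(\sigma)\d\sigma-\mc W^\eps(t)$: the term $\mc A^\eps$ is an integral in time, the dissipation term is a Lipschitz reparametrisation of $\elleps$ (which is Lipschitz by \eqref{elle}) composed with a bounded function, since $\kappa$ is continuous on the compact interval $[\ell_0,L_T]$ provided by Corollary~\ref{ellbd}, and $\mc W^\eps$ is absolutely continuous because $\wepsd\in L^2(0,T)$ and $\uepsx(\cdot,0)\in L^2(0,T)$ by Theorem~\ref{exuniq}. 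Similarly $t\mapsto \weps(t)^2/\elleps(t)$ is absolutely continuous: $\weps\in \widetilde H^1(0,+\infty)$ is absolutely continuous, $\elleps$ is Lipschitz and bounded below by $\ell_0>0$. Hence $\widetilde{\mc E}^\eps\in AC([0,T])$.

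Finally I would differentiate and estimate. From \eqref{eb} and Griffith's criterion \eqref{Griffith} one reads
\begin{equation*}
\dot{\mc E}^\eps(t)=-\nu\!\int_{0}^{\elleps(t)}\!\!\!\!\eps\uepst(t,\sigma)^2\d\sigma-\kappa(\elleps(t))\ellepsd(t)-\wepsd(t)\uepsx(t,0),
\end{equation*}
while a direct computation yields
\begin{equation*}
\frac{\d}{\d t}\left(\frac 12\frac{\weps(t)^2}{\elleps(t)}\right)=\frac{\weps(t)\wepsd(t)}{\elleps(t)}-\frac 12\frac{\weps(t)^2}{\elleps(t)^2}\ellepsd(t).
\end{equation*}
Subtracting, the non-positive term $-\kappa(\elleps(t))\ellepsd(t)$ is dropped, while the remaining contributions are controlled by Young's inequality together with $\elleps(t)\ge\ell_0>0$ and the uniform bound $\|\weps\|_{L^\infty(0,T)}\le C_T$ coming from (H1) and the Sobolev embedding $H^1(0,T)\hookrightarrow C^0([0,T])$. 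This produces the desired estimate \eqref{este}. The only slightly delicate points are verifying the cancellation that gives the clean identity for $\widetilde{\mc E}^\eps$ and keeping track that the constant $C_T$ indeed depends only on $T$ through $L_T$, $\|\weps\|_{L^\infty(0,T)}$ and $\max_{[\ell_0,L_T]}\kappa$; both are essentially bookkeeping rather than conceptual obstacles.
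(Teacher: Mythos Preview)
Your argument is essentially the paper's own: both hinge on the identity $\widetilde{\mc E}^\eps(t)=\mc E^\eps(t)-\tfrac12\weps(t)^2/\elleps(t)$, differentiate via the energy-dissipation balance \eqref{eb}, drop the non-positive term $-\kappa(\elleps(t))\ellepsd(t)$, and bound the rest with Young's inequality and $\elleps\ge\ell_0$.

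One small slip: you invoke Corollary~\ref{ellbd} (and hence $L_T$, $\max_{[\ell_0,L_T]}\kappa$), but that corollary requires (K0), which is not assumed here. This is harmless, since for absolute continuity at fixed $\eps$ it suffices that $\elleps$ is Lipschitz on $[0,T]$ (so $\elleps([0,T])$ is compact) and $\kappa$ is continuous; and for the final estimate you already discard the $\kappa$-term, so no bound on $\kappa$ or uniform bound on $\elleps$ is actually needed---the constant $C_T$ depends only on $\ell_0$ and $\sup_\eps\|\weps\|_{L^\infty(0,T)}$.
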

	\begin{proof}
		By simple computations one can show that:
		\begin{equation}\label{squares}
		\widetilde{\mc E}^\eps(t)=\mc E^\eps(t)-\frac 12 \frac{\weps(t)^2}{\elleps(t)},\quad \text{for every }t\in[0,+\infty).
		\end{equation}
		Now fix $T>0$. The modified energy $\widetilde{\mc E}^\eps$ is absolutely continuous on $[0,T]$ because by \eqref{squares} it is sum of two absolutely continuous functions (see also Proposition~2.1 in \cite{RivNar}). By \eqref{squares} and the energy-dissipation balance \eqref{eb} we then compute for a.e. $t\in(0,+\infty)$:
		\begin{align*}
			\dot{\widetilde{\mc E}^\eps}(t)&=\dot{\mc E}^\eps(t)-\frac 12\frac{\d}{\d t}\frac{\weps(t)^2}{\elleps(t)}=\\
			&=-\kappa(\elleps(t))\ellepsd(t)-\nu\int_{0}^{\elleps(t)}\eps\uepst(t,\sigma)^2\d\sigma-\wepsd(t)\uepsx(t,0)\\
			&\quad+\frac{\ellepsd(t)}{2}\frac{\weps(t)^2}{\elleps(t)^2}-\wepsd(t)\frac{\weps(t)}{\elleps(t)}.
		\end{align*}
		Recalling that $\elleps(t)\ge\ell_0$ and since by (H1) the family $\{\weps\}_{\eps>0}$ is uniformly equibounded in $[0,T]$ we conclude by means of Young's inequality. 
	\end{proof}\noindent
Always inspired by \cite{MisGo}, for $t\in[0,+\infty)$ we also introduce the auxiliary function:
	\begin{equation*}
		\widetilde{\mc F}^\eps(t):=\int_{0}^{\elleps(t)}\eps^2 \uepst(t,\sigma)\big(\ueps(t,\sigma)-\reps(t,\sigma)\big)\d\sigma+\frac{\nu\eps}{2}\int_{0}^{\elleps(t)}\big(\ueps(t,\sigma)-\reps(t,\sigma)\big)^2\d\sigma.
	\end{equation*}
		\begin{lemma}
		Assume (H1) and (K0). Then for every $T>0$ one has:
		\begin{equation}\label{doublebound}
		-\eps\mu^0_T\widetilde{\mc E}^\eps(t)\le\widetilde{\mc F}^\eps(t)\le\eps(\mu^0_T+\mu^1_T)\widetilde{\mc E}^\eps(t),\quad\text{for every }t\in[0,T],
		\end{equation}
		where $\mu^0_T$ and $\mu^1_T$ have been defined in \eqref{muzerouno}.
	\end{lemma}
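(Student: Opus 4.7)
The plan is to observe that the auxiliary quantity $\widetilde{\mc F}^\eps$ differs from $\widetilde{\mc E}^\eps$ essentially by a ``cross'' term and a lower-order $L^2$ term, both of which can be controlled by the energy via Cauchy--Schwarz and a Poincar\'e inequality. The crucial structural fact to exploit is that $\ueps(t,\cdot)-\reps(t,\cdot)$ vanishes at both endpoints $\sigma=0$ and $\sigma=\elleps(t)$, since $\ueps(t,0)=\weps(t)=\reps(t,0)$ and $\ueps(t,\elleps(t))=0=\reps(t,\elleps(t))$. Thus, for each fixed $t\in[0,T]$, the function $\sigma\mapsto\ueps(t,\sigma)-\reps(t,\sigma)$ belongs to $H^1_0(0,\elleps(t))$, and the sharp Poincar\'e inequality yields
\begin{equation*}
\int_0^{\elleps(t)}\big(\ueps(t,\sigma)-\reps(t,\sigma)\big)^2\d\sigma \le \left(\frac{\elleps(t)}{\pi}\right)^{\!2}\int_0^{\elleps(t)}\big(\uepsx(t,\sigma)-\reps_x(t,\sigma)\big)^2\d\sigma .
\end{equation*}
Invoking Corollary~\ref{ellbd} (which needs (H1) and (K0)) we have $\elleps(t)\le L_T$, so $\elleps(t)/\pi\le\mu^0_T$.

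For the upper bound I would treat the two pieces of $\widetilde{\mc F}^\eps$ separately. The kinetic-type cross term is estimated by Cauchy--Schwarz, Poincar\'e, and the arithmetic--geometric mean inequality:
\begin{equation*}
\left|\int_{0}^{\elleps(t)}\!\!\eps^2\uepst(\ueps-\reps)\d\sigma\right|
\le \eps\mu^0_T\!\left(\int_0^{\elleps(t)}\!\!\!\eps^2\uepst^2\d\sigma\right)^{\!1/2}\!\!\left(\int_0^{\elleps(t)}\!\!\!(\uepsx-\reps_x)^2\d\sigma\right)^{\!1/2}\!\!\!\le\eps\mu^0_T\,\widetilde{\mc E}^\eps(t).
\end{equation*}
The dissipative term is estimated directly by Poincar\'e, using $\mu^1_T=\nu(\mu^0_T)^2$:
\begin{equation*}
\frac{\nu\eps}{2}\int_0^{\elleps(t)}(\ueps-\reps)^2\d\sigma\le\frac{\eps\mu^1_T}{2}\int_0^{\elleps(t)}(\uepsx-\reps_x)^2\d\sigma\le\eps\mu^1_T\,\widetilde{\mc E}^\eps(t),
\end{equation*}
since $\tfrac{1}{2}\int(\uepsx-\reps_x)^2\d\sigma\le\widetilde{\mc E}^\eps(t)$. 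Summing the two estimates gives the right-hand inequality in \eqref{doublebound}.

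For the lower bound I would simply drop the nonnegative $L^2$-term in $\widetilde{\mc F}^\eps$ and apply the very same Cauchy--Schwarz--Young bound to the cross term with the opposite sign, getting
\begin{equation*}
\widetilde{\mc F}^\eps(t)\ge\int_{0}^{\elleps(t)}\!\!\eps^2\uepst(\ueps-\reps)\d\sigma\ge-\eps\mu^0_T\,\widetilde{\mc E}^\eps(t).
\end{equation*}
This closes the proof. There is no serious obstacle here: the only subtlety is recognising the homogeneous Dirichlet structure of $\ueps-\reps$ (which is precisely why $\reps$ was chosen as the affine interpolant in \eqref{reps}) and invoking the sharp Poincar\'e constant $L/\pi$ on an interval of length $L$; the role of (K0) is hidden in the uniform a priori bound $\elleps(t)\le L_T$ from Corollary~\ref{ellbd}, without which $\mu^0_T$ and $\mu^1_T$ would not be finite.
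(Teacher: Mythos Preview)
Your proof is correct and follows essentially the same approach as the paper: both arguments exploit that $\ueps(t,\cdot)-\reps(t,\cdot)\in H^1_0(0,\elleps(t))$, apply the sharp Poincar\'e inequality with constant $\elleps(t)/\pi\le\mu^0_T$, and control the cross term by a Cauchy--Schwarz/Young step to reach exactly $\eps\mu^0_T\widetilde{\mc E}^\eps(t)$. The only cosmetic difference is the order of operations (you apply Cauchy--Schwarz then Poincar\'e then AM--GM, whereas the paper applies a weighted Young inequality pointwise and then Poincar\'e), but the resulting bounds are identical.
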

	\begin{proof}
	 We fix $t\in[0,T]$ and by means of the sharp Poincarè inequality: 
		\begin{equation}\label{sharppoinc}
		\int_{a}^{b}f(\sigma)^2\d \sigma\le\frac{(b-a)^2}{\pi^2}\int_{a}^{b}\dot f(\sigma)^2\d \sigma,\text{ for every }f\in H^1_0(a,b),
		\end{equation}
		together with Young's inequality we get:
		\begin{align*}
		&\quad\left|\eps^2\int_{0}^{\elleps(t)}\uepst(t,\sigma)\big(\ueps(t,\sigma)-\reps(t,\sigma)\big)\d\sigma\right|\\
		&\le \frac{\eps}{2}\left[\frac{\elleps(t)}{\pi}\int_{0}^{\elleps(t)}\eps^2\uepst(t,\sigma)^2\d\sigma+\frac{\pi}{\elleps(t)}\int_{0}^{\elleps(t)}\big(\ueps(t,\sigma)-\reps(t,\sigma)\big)^2\d\sigma\right]\\
		&\le\eps\frac{\elleps(t)}{\pi}\left[\frac 12\int_{0}^{\elleps(t)}\eps^2\uepst(t,\sigma)^2\d\sigma+\frac 12\int_{0}^{\elleps(t)}\big(\uepsx(t,\sigma)-r^\eps_x(t,\sigma)\big)^2\d\sigma\right]\le \eps\mu^0_T\widetilde{\mc E}^\eps(t).
		\end{align*}
		From the above estimate we hence deduce:
		\begin{align*}
		-\eps\mu^0_T\widetilde{\mc E}^\eps(t)&\le -\left|\eps^2\int_{0}^{\elleps(t)}\uepst(t,\sigma)\big(\ueps(t,\sigma)-\reps(t,\sigma)\big)\d\sigma\right|\le \widetilde{\mc F}^\eps(t)\\
		&\le\eps\mu^0_T\widetilde{\mc E}^\eps(t)+\frac{\eps\nu}{2}\frac{\elleps(t)^2}{\pi^2}\int_{0}^{\elleps(t)}\big(\uepsx(t,\sigma)-r^\eps_x(t,\sigma)\big)^2\d\sigma\\
		&\le \eps(\mu^0_T+\mu^1_T)\widetilde{\mc E}^\eps(t),
		\end{align*}
		and we conclude.
	\end{proof}
	\begin{lemma}
		Assume (H1) and (K0). Then for every $T>0$ the function $\widetilde{\mc F}^\eps$ is absolutely continuous on $[0,T]$ and the following inequality holds true for a.e. $t\in(0,T)$:
		\begin{equation}\label{estf}
			\dot{\widetilde{\mc F}^\eps}(t)\le 2\int_{0}^{\elleps(t)}\eps^2\uepst(t,\sigma)^2\d\sigma-\widetilde{\mc E}^\eps(t)+C_T\eps^2\big(\wepsd(t)^2+\ellepsd(t)^2\big),
		\end{equation}
		where $C_T$ is a positive constant depending on $T$ but independent of $\eps$.
	\end{lemma}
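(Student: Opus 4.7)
The plan is to first verify absolute continuity of $\widetilde{\mc F}^\eps$ and then compute $\dot{\widetilde{\mc F}^\eps}$ by a formal Leibniz/integration-by-parts argument, exploiting the wave equation, and finally absorb the error terms using Young's inequality and Poincar\'e's inequality. The absolute continuity follows from the regularity $\ueps\in C^0([0,T];H^1(0,+\infty))\cap C^1([0,T];L^2(0,+\infty))$ of Theorem~\ref{exuniq}, together with $\reps\in\widetilde{H}^1$ in time (since $\weps\in\widetilde H^1(0,+\infty)$ and $\elleps\in C^{0,1}$), arguing as for $\widetilde{\mc E}^\eps$ and Proposition~2.1 in \cite{RivNar}. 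The derivative computation itself will be performed under the additional regularity of Remark~\ref{regularity} and then extended to the general case by the same approximation scheme used in Lemma~\ref{intpartslemma}, invoking Theorem~\ref{contdependence} to pass to the limit.

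Setting $v^\eps:=\ueps-\reps$, by the boundary conditions in \eqref{problem2} one has $v^\eps(t,0)=v^\eps(t,\elleps(t))=0$. Since $v^\eps$ vanishes at $\sigma=\elleps(t)$, the Leibniz rule gives no boundary contribution from $\ellepsd$, so
\begin{equation*}
\dot{\widetilde{\mc F}^\eps}(t)=\int_0^{\elleps(t)}\!\bigl[\eps^2\uepstt\, v^\eps+\eps^2\uepst\, v^\eps_t+\nu\eps\, v^\eps v^\eps_t\bigr]\d\sigma.
\end{equation*}
Replacing $\eps^2\uepstt=\uepsxx-\nu\eps\uepst$ from \eqref{problem2}, the terms $\nu\eps\uepst v^\eps$ cancel, and I integrate by parts in $\int\uepsxx v^\eps\d\sigma$, whose boundary contributions vanish since $v^\eps$ has zero trace on $\{\sigma=0,\elleps(t)\}$. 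The key point is that $r^\eps_x(t,\sigma)=-\weps(t)/\elleps(t)$ is constant in $\sigma$, so $\int_0^{\elleps(t)}r^\eps_x v^\eps_x\d\sigma=0$, and therefore $\int\uepsx v^\eps_x\d\sigma=\int(\uepsx-r^\eps_x)^2\d\sigma$. Writing $v^\eps_t=\uepst-r^\eps_t$ in the remaining terms and using the identity $\int(\uepsx-r^\eps_x)^2\d\sigma=2\widetilde{\mc E}^\eps(t)-\int\eps^2\uepst^2\d\sigma$, one obtains
\begin{equation*}
\dot{\widetilde{\mc F}^\eps}(t)=-2\widetilde{\mc E}^\eps(t)+2\int_0^{\elleps(t)}\!\eps^2\uepst^2\d\sigma-\int_0^{\elleps(t)}\!\eps^2\uepst\, r^\eps_t\d\sigma-\nu\eps\int_0^{\elleps(t)}\!v^\eps r^\eps_t\d\sigma.
\end{equation*}

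It remains to show that the last two remainder terms are bounded above by $\widetilde{\mc E}^\eps(t)+C_T\eps^2(\wepsd(t)^2+\ellepsd(t)^2)$, so that we recover the factor $-\widetilde{\mc E}^\eps(t)$ on the right-hand side from the $-2\widetilde{\mc E}^\eps(t)$ available. Young's inequality gives
\begin{equation*}
\Bigl|\!\int\!\eps^2\uepst\, r^\eps_t\,\d\sigma\Bigr|\le\tfrac12\!\int\!\eps^2\uepst^2\d\sigma+\tfrac12\eps^2\!\int\!(r^\eps_t)^2\d\sigma\le\widetilde{\mc E}^\eps(t)+\tfrac12\eps^2\!\int\!(r^\eps_t)^2\d\sigma.
\end{equation*}
Wait, this alone would consume the full slack; I refine it with a small parameter $\delta$, $|ab|\le\frac{\delta}{2}a^2+\frac{1}{2\delta}b^2$, so that this first term costs only a fraction $\delta$ of $\widetilde{\mc E}^\eps(t)$, and then the second term is controlled via the sharp Poincar\'e inequality \eqref{sharppoinc}, valid since $v^\eps(t,0)=v^\eps(t,\elleps(t))=0$:
\begin{equation*}
\nu\eps\Bigl|\!\int\!v^\eps r^\eps_t\d\sigma\Bigr|\le\tfrac{\eta}{2}\!\int\!(v^\eps)^2\d\sigma+\tfrac{\nu^2\eps^2}{2\eta}\!\int\!(r^\eps_t)^2\d\sigma\le\eta(\mu_T^0)^2\widetilde{\mc E}^\eps(t)+\tfrac{\nu^2\eps^2}{2\eta}\!\int\!(r^\eps_t)^2\d\sigma.
\end{equation*}
Choosing $\delta,\eta$ small so that $\delta+2\eta(\mu_T^0)^2\le 1$ (possible since $L_T\le L_T$ is fixed by Corollary~\ref{ellbd}) gives the $\widetilde{\mc E}^\eps$ bound. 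Finally, using the explicit formula \eqref{reps}, the uniform bound of $\weps$ on $[0,T]$ provided by (H1), and $\ell_0\le\elleps(t)\le L_T$, a direct computation yields $\int_0^{\elleps(t)}(r^\eps_t(t,\sigma))^2\d\sigma\le C_T(\wepsd(t)^2+\ellepsd(t)^2)$, which closes the estimate. The principal technical obstacle is justifying the Leibniz rule and the integration by parts in $\sigma$ without assuming $\ueps\in\widetilde H^2(\Omega^\eps)$; this is handled by the approximation scheme of Lemma~\ref{intpartslemma}, and the resulting inequality passes to the limit via Theorem~\ref{contdependence}.
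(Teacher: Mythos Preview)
Your proposal is correct and follows essentially the same route as the paper: formal differentiation, substitution of the wave equation, integration by parts (justified via the approximation scheme of Lemma~\ref{intpartslemma}), and estimation of the $r^\eps_t$-remainders by Young and the sharp Poincar\'e inequality. The only cosmetic difference is your introduction of tunable parameters $\delta,\eta$: this is unnecessary, since the plain choice $\delta=1$ for the first remainder and the sharp Poincar\'e weight for the second consume exactly the kinetic half $\tfrac12\int\eps^2\uepst^2$ and the potential half $\tfrac12\int(\uepsx-r^\eps_x)^2$ of $\widetilde{\mc E}^\eps$, respectively, which is precisely how the paper closes the estimate.
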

	\begin{proof}
		Fix $T>0$. By exploiting the fact that $\ueps$ solves problem \eqref{problem2} we start formally computing the derivative of $\widetilde{\mc F}^\eps$ at almost every point $t\in(0,T)$:
		\begingroup\allowdisplaybreaks
		\begin{align*}
			\dot{\widetilde{\mc F}^\eps}(t)&=\int_{0}^{\elleps(t)}\eps^2u^\eps_{t}(t,\sigma)\big(\uepst(t,\sigma)-r^\eps_t(t,\sigma)\big)\d\sigma+\int_{0}^{\elleps(t)}\eps^2u^\eps_{tt}(t,\sigma)\big(\ueps(t,\sigma)-\reps(t,\sigma)\big)\d\sigma\\
			&\quad+\nu\eps\int_{0}^{\elleps(t)}\big(\ueps(t,\sigma)-\reps(t,\sigma)\big)\big(\uepst(t,\sigma)-r^\eps_t(t,\sigma)\big)\d\sigma\\
			&=\int_{0}^{\elleps(t)}\!\!\!\!\!\!\eps^2u^\eps_{t}(t,\sigma)\big(\uepst(t,\sigma)-r^\eps_t(t,\sigma)\big)\d\sigma+\int_{0}^{\elleps(t)}\!\!\!\!\!\!\big(\ueps(t,\sigma)-\reps(t,\sigma)\big)\big(u^\eps_{xx}(t,\sigma)-r^\eps_{xx}(t,\sigma)\big)\d\sigma\\
			&\quad-\nu\int_{0}^{\elleps(t)}\eps r^\eps_t(t,\sigma)\big(\ueps(t,\sigma)-\reps(t,\sigma)\big)\d\sigma\\
			&=\int_{0}^{\elleps(t)}\eps^2u^\eps_{t}(t,\sigma)\big(\uepst(t,\sigma)-r^\eps_t(t,\sigma)\big)\d\sigma-\int_{0}^{\elleps(t)}\big(\uepsx(t,\sigma)-r^\eps_x(t,\sigma)\big)^2\d\sigma\\
			&\quad-\nu\int_{0}^{\elleps(t)}\eps r^\eps_t(t,\sigma)\big(\ueps(t,\sigma)-\reps(t,\sigma)\big)\d\sigma.
		\end{align*}
		\endgroup
		By means of an approximation argument similar to the one adopted in the proof of Lemma~\ref{intpartslemma} one deduces that $\widetilde{\mc F}^\eps$ is absolutely continuous on $[0,T]$ and that the formula for $\dot{\widetilde{\mc F}^\eps}$ found with the previous computation is actually true.\par 
		To get \eqref{estf} we use the sharp Poincarè inequality \eqref{sharppoinc} and Young's inequality:
		\begingroup\allowdisplaybreaks
		\begin{align*}
			\dot{\widetilde{\mc F}^\eps}(t)&\le 2\int_{0}^{\elleps(t)}\eps^2u^\eps_{t}(t,\sigma)^2\d\sigma-2\widetilde{\mc E}^\eps(t)+\frac 12 \int_{0}^{\elleps(t)}\eps^2\uepst(t,\sigma)^2\d\sigma+\frac 12 \int_{0}^{\elleps(t)}\eps^2 r^\eps_t(t,\sigma)^2\d\sigma\\
			&\quad+\frac \nu 2\left[\frac{1}{\nu}\frac{\pi^2}{\elleps(t)^2}\int_{0}^{\elleps(t)}\big(\ueps(t,\sigma)-\reps(t,\sigma)\big)^2\d\sigma+\nu\frac{\elleps(t)^2}{\pi^2}\int_{0}^{\elleps(t)}\eps^2 r^\eps_t(t,\sigma)^2\d\sigma\right]\\
			&\le2\int_{0}^{\elleps(t)}\eps^2u^\eps_{t}(t,\sigma)^2\d\sigma-2\widetilde{\mc E}^\eps(t)+\frac 12 \int_{0}^{\elleps(t)}\eps^2\uepst(t,\sigma)^2\d\sigma+\frac 12 \int_{0}^{\elleps(t)}\eps^2 r^\eps_t(t,\sigma)^2\d\sigma\\
			&\quad+\frac 12\int_{0}^{\elleps(t)}\big(\uepsx(t,\sigma)-r^\eps_x(t,\sigma)\big)^2\d\sigma+\frac 12 \left(\frac{\nu\elleps(t)}{\pi}\right)^2\int_{0}^{\elleps(t)}\eps^2 r^\eps_t(t,\sigma)^2\d\sigma\\
			&\le2\int_{0}^{\elleps(t)}\eps^2u^\eps_{t}(t,\sigma)^2\d\sigma-\widetilde{\mc E}^\eps(t)+\frac 12\left(1+\nu\mu^1_T\right)\eps^2\int_{0}^{\elleps(t)} r^\eps_t(t,\sigma)^2\d\sigma.
		\end{align*}
		\endgroup
		To conclude it is enough to use Corollary~\ref{ellbd}, (H1) and to exploit the explicit form of $r^\eps$ given by \eqref{reps} getting:
		\begin{equation*}
			\int_{0}^{\elleps(t)} r^\eps_t(t,\sigma)^2\d\sigma\le C_T\big(\wepsd(t)^2+\ellepsd(t)^2\big).
		\end{equation*}
	\end{proof}\noindent
We are now in a position to prove Theorem~\ref{mainestimate}:
\begin{proof}[Proof of Theorem \ref{mainestimate}]
	We fix $T>0$ and we introduce the Lyapunov function:
	\begin{equation*}
		\widetilde{\mc D}^\eps(t):=\widetilde{\mc E}^\eps(t)+\frac{2m}{\eps} \widetilde{\mc F}^\eps(t),\quad\text{for }t\in[0,T].
	\end{equation*}
	From \eqref{doublebound} we easily infer:
	\begin{equation*}
		\big(1-2m\mu^0_T\big)\widetilde{\mc E}^\eps(t)\le\widetilde{\mc D}^\eps(t)\le \big(1+2m(\mu^0_T+\mu^1_T)\big)\widetilde{\mc E}^\eps(t),\quad \text{for every }t\in[0,T],
	\end{equation*}
	and so in particular by definition of $m$ we deduce:
	\begin{equation}\label{ed}
		\frac 12\widetilde{\mc E}^\eps(t)\le\widetilde{\mc D}^\eps(t)\le 2\widetilde{\mc E}^\eps(t), \text{ for every }t\in[0,T].
	\end{equation}
	Moreover we can estimate the derivative of $\widetilde{\mc D}^\eps$ for a.e. $t\in(0,T)$ by using \eqref{este} and \eqref{estf} and recalling that $\eps\ellepsd(t)<1$ and that $4m\le\nu$:
	\begin{align*}
		\dot{\widetilde{\mc D}^\eps}(t)&=\dot{\widetilde{\mc E}^\eps}(t)+\frac{2m}{\eps}\dot{\widetilde{\mc F}^\eps}(t)\\
		&\le-\big(\nu-4m\big)\int_{0}^{\elleps(t)}\eps\uepst(t,\sigma)^2\d\sigma-\frac{2m}{\eps} \widetilde{\mc E}^\eps(t)+C_T\big(\ellepsd(t)+\wepsd(t)^2+\uepsx(t,0)^2+1\big)\\
		&\le-\frac{2m}{\eps} \widetilde{\mc E}^\eps(t)+C_T\big(\ellepsd(t)+\wepsd(t)^2+\uepsx(t,0)^2+1\big).
	\end{align*}
	By \eqref{ed} we hence deduce:
	\begin{equation*}
		\dot{\widetilde{\mc D}^\eps}(t)\le -\frac{m}{\eps}\widetilde{\mc D}^\eps(t)+C_T\big(\ellepsd(t)+\wepsd(t)^2+\uepsx(t,0)^2+1\big),\quad\text{for a.e. }t\in(0,T),
	\end{equation*}
	from which for every $t\in[0,T]$ we get:
	\begin{equation*}
		\widetilde{\mc D}^\eps(t)\le\widetilde{\mc D}^\eps(0)e^{-m\frac{t}{\eps}}+C_T\int_{0}^{t}\big(\ellepsd(\tau)+\wepsd(\tau)^2+\uepsx(\tau,0)^2+1\big)e^{-m\frac{t-\tau}{\eps}}\d\tau.
	\end{equation*}
	We conclude by using again \eqref{ed}.
\end{proof}

	\section{Quasistatic limit}\label{sec4}
	In this Section we show how, thanks to the estimates of Section \ref{sec3}, dynamic evolutions $\left(\ueps,\elleps\right)$ converge to a quasistatic one as $\eps\to 0^+$, except for a possible initial jump due to an excess of initial potential energy. The rigorous result is stated in Theorem~\ref{finalthm}.
	Also in this Section we assume that $\kappa$ belongs to $\widetilde{C}^{0,1}([\ell_0,+\infty))$.
	\subsection{Extraction of convergent subsequences}
	We first prove that the sequence of debonding fronts $\elleps$ admits a pointwise convergent subsequence.
	\begin{prop}\label{ellconv}
		Assume (H1) and (K0). Then there exists a subsequence $\epsn\searrow 0$ and there exists a nondecreasing function $\ell\colon[0,+\infty)\to[\ell_0,+\infty)$ such that
		\begin{equation*}
		\lim\limits_{n\to+\infty}\ellepsn(t)=\ell(t),\quad\text{for every }t\in[0,+\infty).
		\end{equation*}
	\end{prop}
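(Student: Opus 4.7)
The plan is to invoke Helly's selection theorem together with a standard diagonal extraction. The input we have on the family $\{\elleps\}_{\eps \in (0,1/2)}$ is exactly what Helly requires: by \eqref{elleb} each $\elleps$ is nondecreasing (and $\elleps(0) = \ell_0$), while Corollary~\ref{ellbd}, whose hypotheses (H1) and (K0) are in force, provides a uniform upper bound $\elleps(T) \le L_T$ on every bounded interval $[0,T]$.

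First I would fix an increasing sequence $T_k \nearrow +\infty$, say $T_k = k$. On $[0,T_1]$ the family $\{\elleps\}_{\eps \in (0,1/2)}$ consists of nondecreasing functions taking values in $[\ell_0, L_{T_1}]$, so Helly's selection theorem produces a subsequence $\{\ell^{\eps_n^{(1)}}\}_{n\in\enne}$ with $\eps_n^{(1)} \searrow 0$ that converges pointwise on $[0,T_1]$ to a nondecreasing function $\ell^{(1)}\colon [0,T_1] \to [\ell_0, L_{T_1}]$. Applying Helly again to $\{\ell^{\eps_n^{(1)}}\}_{n\in\enne}$ restricted to $[0,T_2]$ (still nondecreasing and bounded by $L_{T_2}$) gives a further subsequence $\{\ell^{\eps_n^{(2)}}\}_{n\in\enne}$ converging pointwise on $[0,T_2]$ to some $\ell^{(2)}$, which necessarily agrees with $\ell^{(1)}$ on $[0,T_1]$. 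Iterating produces a nested family of subsequences.

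Next I would take the diagonal subsequence $\epsn := \eps_n^{(n)}$. For every $t \in [0,+\infty)$ there exists $k$ with $t \in [0,T_k]$, and from index $n \ge k$ onward the sequence $\{\epsn\}_{n\ge k}$ is a subsequence of $\{\eps_n^{(k)}\}_{n\in\enne}$; therefore $\ellepsn(t) \to \ell(t)$, where $\ell$ is defined by $\ell(t) := \ell^{(k)}(t)$ whenever $t \in [0,T_k]$ (this is well posed by the compatibility of the $\ell^{(k)}$'s). The limit $\ell$ is nondecreasing as a pointwise limit of nondecreasing functions, and $\ell(0) = \ell_0$ because $\ellepsn(0) = \ell_0$ for every $n$. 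Its range is contained in $[\ell_0,+\infty)$ by the same monotonicity.

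No real obstacle is expected: the only mild subtlety is keeping track that the diagonal sequence is eventually a subsequence of each $\{\eps_n^{(k)}\}_{n\in\enne}$, but this is the standard Cantor diagonal trick. Note that assumption (K0) enters only implicitly, via its role in Corollary~\ref{ellbd}, which is what yields the local uniform boundedness needed for Helly.
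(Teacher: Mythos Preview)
Your proof is correct and matches the paper's approach: the paper simply cites Corollary~\ref{ellbd} and invokes Helly's selection principle, and your argument is exactly a detailed unpacking of that one-line proof, including the standard diagonal extraction to pass from bounded intervals to $[0,+\infty)$.
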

	\begin{proof}
		The result follows by Corollary~\ref{ellbd} and by a simple application of the classical Helly's selection principle.
	\end{proof}
In order to deal with the convergence of the vertical displacements $\ueps$ we exploit the energy decay \eqref{mainestimate}:
	\begin{prop}\label{energydecay}
		Assume (H1) and (K0) and let $\nu$ be \textbf{positive}. Then for every $T>0$ the modified energy $\widetilde{\mc E}^\eps$ converges to $0$ in $L^1(0,T)$ when $\eps\to 0^+$. Thus there exists a subsequence $\epsn\searrow 0$ such that:
		\begin{equation*}
		\lim\limits_{n\to+\infty}\widetilde{\mc E}^\epsn(t)=0,\quad\text{for almost every }t\in(0,+\infty).
		\end{equation*}
	\end{prop}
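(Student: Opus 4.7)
The plan is to integrate the decay estimate \eqref{estimatemain} over $[0,T]$ and show that the right-hand side tends to $0$ as $\eps \to 0^+$. The positivity of $\widetilde{\mc E}^\eps$ will then yield the a.e. convergence of a subsequence from $L^1$ convergence.

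First I would control the initial-data term. By hypothesis (H1), the family $\{u_0^\eps\}_{\eps>0}$ is bounded in $H^1(0,\ell_0)$, the family $\{\eps u_1^\eps\}_{\eps>0}$ in $L^2(0,\ell_0)$, and $\{\weps(0)\}$ is bounded since $\weps(0) = u_0^\eps(0)$ with $u_0^\eps$ bounded in $H^1(0,\ell_0) \hookrightarrow C^0([0,\ell_0])$. Hence $\widetilde{\mc E}^\eps(0)$ is bounded uniformly in $\eps$, and a direct computation gives
\begin{equation*}
\int_0^T 4\widetilde{\mc E}^\eps(0) e^{-mt/\eps}\d t \le 4\widetilde{\mc E}^\eps(0)\frac{\eps}{m} = O(\eps).
\end{equation*}

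Next I would handle the convolution term by Fubini: setting $F^\eps(\tau) := \ellepsd(\tau) + \wepsd(\tau)^2 + \uepsx(\tau,0)^2 + 1$, swapping the order of integration gives
\begin{equation*}
\int_0^T\!\!\int_0^t F^\eps(\tau) e^{-m(t-\tau)/\eps}\d\tau\,\d t = \int_0^T F^\eps(\tau) \int_\tau^T e^{-m(t-\tau)/\eps}\d t\,\d\tau \le \frac{\eps}{m}\int_0^T F^\eps(\tau)\d\tau.
\end{equation*}
It thus suffices to prove that $\int_0^T F^\eps(\tau)\d\tau$ is bounded uniformly in $\eps \in (0,1/2)$. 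The contribution of $\ellepsd$ integrates to $\elleps(T) - \ell_0 \le L_T - \ell_0$ by Corollary~\ref{ellbd}; the term $\wepsd$ is controlled by (H1); and the boundary term $\uepsx(\cdot,0)$ is handled via Lemma~\ref{intpartslemma}, whose right-hand side is bounded in terms of $\mc E^\eps$ and $\weps$, which in turn are controlled uniformly by Proposition~\ref{energyboundprop} and (H1). Putting everything together, $\int_0^T \widetilde{\mc E}^\eps(t)\d t = O(\eps)$, which proves the $L^1$ convergence.

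The main (and essentially only) subtle point is the uniform bound on $\int_0^T \uepsx(\tau,0)^2\d\tau$: it is here that the full strength of Lemma~\ref{intpartslemma} combined with Gr\"onwall (as in the proof of Proposition~\ref{energyboundprop}) comes in. Once $L^1$ convergence is established, since $\widetilde{\mc E}^\eps \ge 0$, a standard extraction argument (passing to a subsequence from which an a.e. pointwise convergent sub-subsequence is drawn) furnishes the sequence $\epsn \searrow 0$ along which $\widetilde{\mc E}^\epsn(t) \to 0$ for a.e. $t \in (0,+\infty)$, concluding the proof.
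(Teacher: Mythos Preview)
Your proposal is correct and follows essentially the same approach as the paper: integrate the decay estimate \eqref{estimatemain}, bound the exponential term via the uniform bound on $\widetilde{\mc E}^\eps(0)$, and control the convolution term (the paper writes it as $\rho^\eps*\eta^\eps$ and uses $\|\rho^\eps*\eta^\eps\|_{L^1}\le\|\rho^\eps\|_{L^1}\|\eta^\eps\|_{L^1}$, which is exactly your Fubini computation) using Corollary~\ref{ellbd}, (H1), and the uniform $L^2$ bound on $\uepsx(\cdot,0)$ from Lemma~\ref{intpartslemma} and Proposition~\ref{energyboundprop}. The only small addition in the paper is an explicit diagonal argument to upgrade the a.e.\ convergence from each $(0,T)$ to all of $(0,+\infty)$, which your ``standard extraction argument'' should incorporate.
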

	\begin{proof}
		We fix $T>0$. Theorem~\ref{mainestimate} ensures that:
		\begin{equation*}
		\widetilde{\mc E}^\eps(t)\le 4\widetilde{\mc E}^\eps(0)e^{-m\frac{t}{\eps}}+C_T(\rho^\eps*\eta^\eps)(t),\quad \text{for every }t\in[0,T],
		\end{equation*} 
		where the symbol $*$ denotes the convolution product and for a.e. $t\in\erre$ we define:
		\begin{align*}
			&\rho^\eps(t):=\big(\ellepsd(t)+\wepsd(t)^2+\uepsx(t,0)^2+1\big)\chi_{[0,T]}(t),\\
			&\eta^\eps(t):=e^{-m\frac t\eps}\chi_{[0,+\infty)}(t).
		\end{align*} 
		Furthermore by \eqref{squares} and (H1) we get that $\widetilde{\mc E}^\eps(0)$ is uniformly bounded in $\eps$, and so by classical properties of convolutions we estimate:
		\begin{align*}
			\Vert\widetilde{\mc E}^\eps\Vert_{L^1(0,T)}&\le C\int_{0}^{+\infty}e^{-m\frac{\tau}{\eps}}\d\tau+C_T\Vert\rho^\eps*\eta^\eps\Vert_{L^1(\erre)}\\
			&\le C \frac\eps m+C_T\Vert\rho^\eps\Vert_{L^1(\erre)}\Vert\eta^\eps\Vert_{L^1(\erre)}=\frac \eps m\big(C+C_T\Vert\rho^\eps\Vert_{L^1(\erre)}\big).
		\end{align*}
		Now we bound the $L^1$-norm of $\rho^\eps$ by means of (H1), (K0) and recalling that by Lemma~\ref{intpartslemma} and Proposition~\ref{energyboundprop} we know that $\Vert\uepsx(\cdot,0)\Vert_{L^2(0,T)}$ is uniformly bounded with respect to $\eps$:
		\begin{align*}
			\Vert\rho^\eps\Vert_{L^1(\erre)}&=\elleps(T)-\ell_0+\Vert\wepsd\Vert^2_{L^2(0,T)}+\Vert\uepsx(\cdot,0)\Vert^2_{L^2(0,T)}+T\le C_T.
		\end{align*}
		Thus we deduce that $\widetilde{\mc E}^\eps\to 0$ in $L^1(0,T)$ when $\eps\to 0^+$ and so we conclude by using a diagonal argument.
	\end{proof}
	Similarly to what we did in Lemma~\ref{intpartslemma} we need to understand the behaviour of $\uepsx(\cdot,0)$ when $\eps\to 0^+$ before carrying on the analysis of the convergence of $\ueps$.
	\begin{lemma}
		Let the function $h$ be as in Lemma~\ref{intpartslemma}. Then the following equality holds true for every $t\in[0,+\infty)$:
		\begin{equation}\label{intparts2}
		\begin{aligned}
		&\quad\quad\frac 12\int_{0}^{t}\Big(\eps^2\wepsd(\tau)^2+\big(\uepsx(\tau,0)-r^\eps_x(\tau,0)\big)^2\Big)\d\tau\\
		&=-\frac 12 \int_{0}^{t}\int_{0}^{\ell_0}\dot{h}(\sigma)\Big(\eps^2\uepst(\tau,\sigma)^2+\big(\uepsx(\tau,\sigma)-r^\eps_x(\tau,\sigma)\big)^2\Big)\d\sigma\d\tau\\
		&\quad-\nu\int_{0}^{t}\int_{0}^{\ell_0}h(\sigma)\eps\uepst(\tau,\sigma)\big(\uepsx(\tau,\sigma)-r^\eps_x(\tau,\sigma)\big)\d\sigma\d\tau\\
		&\quad-\eps\left(\int_{0}^{\ell_0}h(\sigma)\eps\uepst(t,\sigma)\uepsx(t,\sigma)\d\sigma-\int_{0}^{\ell_0}h(\sigma)\eps u^\eps_1(\sigma)\dot{u}^\eps_0(\sigma)\d\sigma\right)\\
		&\quad-\eps\int_{0}^{\ell_0}h(\sigma)\left(\frac{\weps(t)}{\elleps(t)}\eps\uepst(t,\sigma)-\frac{\weps(0)}{\ell_0}\eps u_1^\eps(\sigma)\right)\d\sigma\\
		&\quad+\eps\int_{0}^{t}\int_{0}^{\ell_0}h(\sigma)\eps\uepst(\tau,\sigma)\frac{\wepsd(\tau)\elleps(\tau)-\weps(\tau)\ellepsd(\tau)}{\elleps(\tau)^2}\d\sigma\d\tau.
		\end{aligned}
		\end{equation}
	\end{lemma}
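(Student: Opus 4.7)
The plan is to follow verbatim the scheme of Lemma~\ref{intpartslemma}: first carry out the computation formally under the spurious assumption $\ueps\in\widetilde{H}^2(\Omega^\eps)$, then justify it by the same approximation argument used there (Remark~\ref{regularity} together with Theorem~\ref{contdependence}).

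Setting $F^\eps(\tau,\sigma):=\eps^2\uepst(\tau,\sigma)^2+\bigl(\uepsx(\tau,\sigma)-r^\eps_x(\tau,\sigma)\bigr)^2$ and noting that $\uepst(\tau,0)=\wepsd(\tau)$, the integrand on the left hand side of \eqref{intparts2} equals $\tfrac12 F^\eps(\tau,0)$. Thanks to $h(0)=1$ and $h(\ell_0)=0$, the fundamental theorem of calculus yields
\begin{equation*}
\tfrac12 F^\eps(\tau,0)=-\tfrac12\int_{0}^{\ell_0}\partial_\sigma\bigl[h(\sigma)F^\eps(\tau,\sigma)\bigr]\d\sigma.
\end{equation*}
Expanding this derivative, using $r^\eps_{xx}\equiv 0$ on $(0,\elleps(\tau))\supset(0,\ell_0)$, and substituting $u^\eps_{xx}=\eps^2 u^\eps_{tt}+\nu\eps\uepst$ from \eqref{problem2}, the right hand side splits into three pieces: the $\dot h$-contribution (matching line one of \eqref{intparts2}), the viscous contribution $-\nu\eps\int h(\sigma)(\uepsx-r^\eps_x)\uepst\,\d\sigma$ (line two), and an inertial remainder $-\eps^2\int h(\sigma)\bigl[\uepst u^\eps_{tx}+(\uepsx-r^\eps_x)u^\eps_{tt}\bigr]\d\sigma$.

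The decisive algebraic identity will be
\begin{equation*}
\uepst u^\eps_{tx}+(\uepsx-r^\eps_x)u^\eps_{tt}=\partial_\tau(\uepst\uepsx)-r^\eps_x u^\eps_{tt},
\end{equation*}
so integrating the inertial remainder in $\tau\in[0,t]$ and recognising the total $\tau$-derivative yields precisely the boundary term of line three. The leftover piece $\eps^2\int_0^t\int_0^{\ell_0}h(\sigma)\,r^\eps_x u^\eps_{tt}\,\d\sigma\d\tau$ will be handled by a further integration by parts in $\tau$: since $\elleps(\tau)\ge\ell_0$, on the support of $h$ the factor $r^\eps_x(\tau,\sigma)=-\weps(\tau)/\elleps(\tau)$ is a function of $\tau$ only, with $\partial_\tau r^\eps_x=-(\wepsd\elleps-\weps\ellepsd)/\elleps^2$; the boundary contributions of this integration by parts produce line four, and the remaining space-time integral matches line five.

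The only genuine obstacle is regularity: $u^\eps_{tt}$ and $u^\eps_{xx}$ are only distributional for $\ueps\in\widetilde{H}^1(\Omega^\eps)$, so the formal manipulation is not licit. As in Lemma~\ref{intpartslemma}, the remedy is to approximate the data $(\weps,u_0^\eps,u_1^\eps)$ by smoother triples $(w^\eps_n,{u_0^\eps}_n,{u_1^\eps}_n)$ for which the associated solutions $u^\eps_n$ lie in $\widetilde H^2(\Omega^\eps)$ by Remark~\ref{regularity}, verify \eqref{intparts2} at the regular level by the computation above, and then pass to the limit term by term via the convergences in $H^1((0,T)\times(0,+\infty))$, $C^1([0,T];L^2(0,+\infty))$, $W^{1,1}(0,T)$ and $L^2(0,T)$ at $x=0$ provided by Theorem~\ref{contdependence}.
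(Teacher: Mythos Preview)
Your proof is correct and follows exactly the approach the paper indicates: repeat the integration-by-parts scheme of Lemma~\ref{intpartslemma} with $\uepsx$ replaced by $\uepsx-r^\eps_x$, exploiting the explicit form \eqref{reps} of $r^\eps$ (so that $r^\eps_{xx}=0$ and $r^\eps_x(\tau,\sigma)=-\weps(\tau)/\elleps(\tau)$ on the support of $h$), and then justify via the same approximation argument. The paper's own proof is a one-line reference to Lemma~\ref{intpartslemma}, and you have simply spelled out the additional bookkeeping that the presence of $r^\eps$ introduces.
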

\begin{proof}
	The proof follows by using exactly the same argument adopted in Lemma~\ref{intpartslemma}, recalling the explicit formula of the affine function $r^\eps$ given by \eqref{reps}.
\end{proof}
\begin{cor}\label{bordo}
	Assume (H1) and (K0) and let $\nu>0$. Then for every $T>0$ one has:
	\begin{equation*}
		\uepsx(\cdot,0)-r^\eps_x(\cdot,0)\to 0,\quad\text{in }L^2(0,T)\text{ as }\eps\to 0^+.
	\end{equation*} 
	Moreover, considering the subsequence $\epsn$ given by \eqref{unifw} and Proposition~\ref{ellconv}, one gets:
	\begin{equation}\label{convboundary}
	\uepsnx(\cdot,0)\to -\frac{w}{\ell},\quad\text{in }L^2(0,T)\text{ as }n\to +\infty,
	\end{equation}
	where $w$ is given by \eqref{unifw} and $\ell$ is the function obtained in Proposition~\ref{ellconv}.
\end{cor}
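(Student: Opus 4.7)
The plan is to apply the integration-by-parts identity \eqref{intparts2} and show that every term on its right-hand side tends to zero as $\eps\to 0^+$. First, I note that by (H1) the term $\eps^2\int_{0}^{T}\wepsd(\tau)^2\d\tau$ is $O(\eps^2)$, so rearranging \eqref{intparts2} reduces the first convergence to proving that each of the five remaining blocks on the right-hand side vanishes in $\eps$.

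For the first block (the integral against $\dot h$), I would simply observe that the integrand $\eps^2\uepst(\tau,\sigma)^2+(\uepsx(\tau,\sigma)-r^\eps_x(\tau,\sigma))^2$ restricted to $\sigma\in[0,\ell_0]$ is pointwise bounded by $2\widetilde{\mc E}^\eps(\tau)$, so this block is controlled by $C\Vert\widetilde{\mc E}^\eps\Vert_{L^1(0,T)}$, which goes to $0$ by Proposition~\ref{energydecay}. The second block (the viscous term) is handled identically: Young's inequality gives $|\eps\uepst(\uepsx-r^\eps_x)|\le\tfrac12(\eps^2\uepst{}^2+(\uepsx-r^\eps_x)^2)$, so the same $L^1$-convergence of $\widetilde{\mc E}^\eps$ does the job. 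The third and fourth blocks carry an explicit prefactor $\eps$; the integrals they multiply are bounded uniformly in $\eps$ by Proposition~\ref{energyboundprop} combined with (H1), hence they are $O(\eps)$.

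The only delicate estimate is the last double integral. I would bound the inner integral by Cauchy--Schwarz as $|\int_0^{\ell_0} h(\sigma)\eps\uepst(\tau,\sigma)\d\sigma|\le\sqrt{2\ell_0\,\widetilde{\mc E}^\eps(\tau)}$, and then split the outer integrand into a $|\wepsd(\tau)|$ piece and a $\ellepsd(\tau)$ piece (using $|\weps|\le C_T$ and $\elleps\ge\ell_0$). For the $\wepsd$ piece, a further Cauchy--Schwarz in $\tau$ together with (H1) and the $L^1$-bound on $\widetilde{\mc E}^\eps$ yields an $O(\eps)$ estimate. The potentially troublesome $\ellepsd$ piece—whose $L^2$-norm could a priori blow up like $1/\sqrt{\eps}$—is controlled by using the $L^\infty$ bound $\widetilde{\mc E}^\eps(\tau)\le C_T$ coming from Proposition~\ref{energyboundprop} and the total-variation bound $\int_0^T\ellepsd(\tau)\d\tau\le L_T-\ell_0$ from Corollary~\ref{ellbd}, so this piece is also $O(\eps)$. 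This step is the main obstacle, and it is precisely the non-negativity of $\ellepsd$ and the uniform bound on $\elleps(T)$ that save the argument.

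Once $\uepsx(\cdot,0)-r^\eps_x(\cdot,0)\to 0$ in $L^2(0,T)$ is established, I would deduce \eqref{convboundary} along the subsequence $\epsn$ by identifying the limit of $r^\epsn_x(\cdot,0)=-\wepsn/\ellepsn$. Since $\wepsn\to w$ uniformly on $[0,T]$ by \eqref{unifw}, $\ellepsn(\tau)\to\ell(\tau)$ pointwise by Proposition~\ref{ellconv}, and the sequence $\wepsn/\ellepsn$ is dominated by the constant $\sup_n\Vert\wepsn\Vert_{L^\infty(0,T)}/\ell_0$, the dominated convergence theorem gives $r^\epsn_x(\cdot,0)\to -w/\ell$ in $L^2(0,T)$. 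Combining this with the first convergence yields \eqref{convboundary} and completes the proof.
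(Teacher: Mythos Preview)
Your argument is correct and follows essentially the same route as the paper: both proofs start from the identity \eqref{intparts2}, bound the first two blocks by $C\Vert\widetilde{\mc E}^\eps\Vert_{L^1(0,T)}$ and invoke Proposition~\ref{energydecay}, handle the boundary-in-time blocks as $O(\eps)$ via Proposition~\ref{energyboundprop}, and deduce \eqref{convboundary} from the convergence $r^\epsn_x(\cdot,0)=-\wepsn/\ellepsn\to -w/\ell$. The only tactical difference is in the last double integral: the paper uses the pointwise bound $\eps\ellepsd<1$ together with the $\mc A^\eps$ bound to obtain an $O(\sqrt{\eps})$ estimate, whereas you use the uniform bound $\widetilde{\mc E}^\eps\le C_T$ and the total-variation bound $\int_0^T\ellepsd\le L_T-\ell_0$ to obtain the slightly sharper $O(\eps)$; both work.
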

\begin{proof}
	We fix $T>0$ and we simply estimate by using \eqref{intparts2} and recalling that by (H1) the family $\{\weps\}_{\eps>0}$ is uniformly equibounded in $[0,T]$:
	\begin{align*}
		&\quad\int_{0}^{T}\big(\uepsx(\tau,0)-r^\eps_x(\tau,0)\big)^2\d\tau\\
		&\le C_T\left[\int_{0}^{T}\widetilde{\mc E}^\eps(\tau)\d\tau +\eps\left(\mc E^\eps(t)+\mc E^\eps(0)+\int_{0}^{T}\wepsd(\tau)^2\d\tau+1+\int_{0}^{T}\!\!\!\eps\ellepsd(\tau)\int_{0}^{\elleps(\tau)}\!\!\!\!\!\!\!|\uepst(\tau,\sigma)|\d\sigma\d\tau\right)\right].
	\end{align*}
	By Hölder's inequality and since $\eps\ellepsd(t)<1$ almost everywhere we then deduce:
	\begin{equation*}
		\int_{0}^{T}\eps\ellepsd(\tau)\int_{0}^{\elleps(\tau)}|\uepst(\tau,\sigma)|\d\sigma\d\tau\le \sqrt{TL_T}\left(\int_{0}^{T}\int_{0}^{\elleps(\tau)}\uepst(\tau,\sigma)^2\d\sigma\d\tau\right)^\frac 12=\sqrt{\frac{TL_T}{\eps\nu}}\mc A^\eps(T)^\frac 12.
	\end{equation*}
	By means of Proposition~\ref{energyboundprop} we hence obtain:
	\begin{equation*}
	\quad\int_{0}^{T}\big(\uepsx(\tau,0)-r^\eps_x(\tau,0)\big)^2\d\tau\le C_T\left[\int_{0}^{T}\widetilde{\mc E}^\eps(\tau)\d\tau +\eps\left(\Vert\wepsd\Vert^2_{L^2(0,T)}+1\right)+\sqrt{\eps}\right].
	\end{equation*}
	We conclude by using (H1) and Proposition~\ref{energydecay}.\par 
	The proof of \eqref{convboundary} trivially follows by triangular inequality, recalling that by \eqref{reps} we know that $\reps_x(t,0)=-\frac{\weps(t)}{\elleps(t)}$ for every $t\in[0,+\infty)$.
\end{proof}
We are now in a position to state our first result about the convergence of $\ueps$ to the proper affine function.
\begin{thm}\label{convu}
	Assume (H1), (K0), $\nu>0$ and let $\epsn$ be the subsequence given by \eqref{unifw}, Propositions~\ref{ellconv} and \ref{energydecay}. Let $\ell$ be the nondecreasing function obtained in Proposition~\ref{ellconv}. Then as $n\to+\infty$ one has:
	\begin{itemize}
		\item $\epsn\uepsnt(t,\cdot)\to 0$ strongly in $L^2(0,+\infty)$,$\quad\quad\,$ for every $t\in(0,+\infty)\diff J_\ell$,
		\item $\uepsn(t,\cdot)\to u(t,\cdot)$ strongly in $H^1(0,+\infty)$,$\quad$ for every $t\in(0,+\infty)\diff J_\ell$,
	\end{itemize}
	where $J_\ell$ is the jump set of $\ell$ and:
	\begin{equation*}
	u(t,x):=w(t)\left(1-\frac{x}{\ell(t)}\right)\chi_{[0,\ell(t)]}(x),\quad \text{for }(t,x)\in[0,+\infty)\times[0,+\infty),
	\end{equation*}
	with $w$ given by \eqref{unifw}.
\end{thm}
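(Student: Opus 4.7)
The plan is to use Proposition~\ref{energydecay} as the starting point: along the subsequence $\epsn$ we have $\widetilde{\mc E}^\epsn(t)\to 0$ for almost every $t\in(0,+\infty)$. Since $\widetilde{\mc E}^\epsn(t)$ is the sum of the two non-negative quantities
$$\tfrac12\int_0^{\ellepsn(t)}\epsn^2\uepsnt(t,\sigma)^2\d\sigma\quad\text{and}\quad\tfrac12\int_0^{\ellepsn(t)}\big(\uepsnx(t,\sigma)-r^\epsn_x(t,\sigma)\big)^2\d\sigma,$$
each summand vanishes in the limit. Because $\uepsn(t,\cdot)$ and $\reps(t,\cdot)$ are both extended by zero past $\ellepsn(t)$, this immediately yields $\epsn\uepsnt(t,\cdot)\to 0$ and $\uepsnx(t,\cdot)-r^\epsn_x(t,\cdot)\to 0$ in $L^2(0,+\infty)$, giving the first bullet of the theorem.

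Next I would compare the affine approximant $\reps(t,\cdot)$ with the limit profile $u(t,\cdot)$. Fix $t\notin J_\ell$: by Proposition~\ref{ellconv} we have $\ellepsn(t)\to\ell(t)$, and by the uniform convergence in~\eqref{unifw} we have $\wepsn(t)\to w(t)$. Writing $r^\epsn_x(t,\cdot)=-\wepsn(t)/\ellepsn(t)$ on $[0,\ellepsn(t)]$ and zero elsewhere, while $u_x(t,\cdot)$ has the analogous form with $w(t),\ell(t)$, a direct computation using the vanishing measure of the symmetric difference $[0,\ellepsn(t)]\triangle[0,\ell(t)]$ shows $\reps(t,\cdot)\to u(t,\cdot)$ in $H^1(0,+\infty)$. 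Combining with Step~1 via the triangle inequality produces $\uepsnx(t,\cdot)\to u_x(t,\cdot)$ in $L^2(0,+\infty)$.

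To upgrade this to strong $H^1$ convergence I would exploit Corollary~\ref{ellbd}: for $t\in[0,T]$ and $n$ large, both $\uepsn(t,\cdot)$ and $u(t,\cdot)$ vanish outside $[0,L_T]$, so integrating backwards from $L_T$ and using Cauchy--Schwarz yields
$$\Vert\uepsn(t,\cdot)-u(t,\cdot)\Vert_{L^\infty(0,+\infty)}\le L_T^{1/2}\Vert\uepsnx(t,\cdot)-u_x(t,\cdot)\Vert_{L^2(0,+\infty)}\to 0,$$
which, combined with the uniformly compact support, also delivers the missing $L^2$ convergence of $\uepsn(t,\cdot)-u(t,\cdot)$.

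The main obstacle is that Proposition~\ref{energydecay} delivers $\widetilde{\mc E}^\epsn(t)\to 0$ only for almost every $t$, whereas the theorem asserts the conclusion for \emph{every} $t\notin J_\ell$. I expect this gap to be bridged by exploiting the energy-dissipation balance~\eqref{eb} together with the $L^2(0,T)$ convergence of $\uepsnx(\cdot,0)$ from Corollary~\ref{bordo}: indeed, via~\eqref{eb} one can write $\mc E^\epsn(t)$ as the initial energy minus three terms each of which passes to its quasistatic counterpart at any continuity point $t$ of $\ell$ (the toughness integral by continuity of the upper limit, the work term by the weak-strong convergence of $\wepsnd\uepsnx(\cdot,0)$ in $L^1$, the friction term by a decay estimate analogous to Proposition~\ref{energydecay}), forcing $\mc E^\epsn(t)\to w(t)^2/(2\ell(t))$ and hence, by~\eqref{squares}, $\widetilde{\mc E}^\epsn(t)\to 0$ at every such $t$. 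This continuity upgrade is the place where I expect to spend the most effort.
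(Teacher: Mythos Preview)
Your Steps 1--3 are correct and essentially reproduce the paper's argument (the paper packages the $L^2\!\to\! H^1$ upgrade via Poincar\'e rather than your integration from $L_T$, but this is immaterial). You also correctly identify the real obstacle: upgrading the almost-everywhere conclusion of Proposition~\ref{energydecay} to \emph{every} $t\notin J_\ell$.

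However, your proposed resolution of that obstacle has a genuine gap. You suggest writing $\mc E^\epsn(t)$ via~\eqref{eb} as ``initial energy minus three terms'' and passing each to the limit, in particular the friction term ``by a decay estimate analogous to Proposition~\ref{energydecay}''. But $\mc A^\epsn(t)$ does \emph{not} tend to zero: as the paper shows later (Proposition~\ref{mu} and Proposition~\ref{ebquas}), $\mc A^\epsn(t)\to\mu([0,t])$, a strictly positive constant coming from the initial dissipation jump. Moreover, under (H1) alone $\mc E^\epsn(0)$ need not converge at all. So the individual terms in~\eqref{eb} do not all have identifiable limits at this stage of the argument.

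The paper circumvents this by a sandwich argument that never computes the limit of $\mc A^\epsn$. For $t\notin J_\ell$ pick $s_j\nearrow t$ and $t_j\searrow t$ in the a.e.\ set from Proposition~\ref{energydecay}. Since $\tau\mapsto\mc A^\epsn(\tau)+\int_{\ell_0}^{\ellepsn(\tau)}\kappa$ is nondecreasing,~\eqref{eb} gives
\[
\mc E^\epsn(t_j)+\int_t^{t_j}\wepsnd\,\uepsnx(\cdot,0)\;\le\;\mc E^\epsn(t)\;\le\;\mc E^\epsn(s_j)+\int_t^{s_j}\wepsnd\,\uepsnx(\cdot,0).
\]
Now let $n\to\infty$: the endpoints converge by the a.e.\ result, and the work integrals by the weak--strong pairing you already noted (Corollary~\ref{bordo} with~\eqref{unifw}). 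Finally let $j\to\infty$ and use continuity of $\ell$ at $t$ to squeeze $\mc E^\epsn(t)\to\tfrac12 w(t)^2/\ell(t)$, hence $\widetilde{\mc E}^\epsn(t)\to 0$ by~\eqref{squares}. This monotonicity trick is the missing idea in your sketch.
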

\begin{proof}
	By \eqref{unifw} and by Proposition~\ref{ellconv} it is easy to see that for every $t\in[0,+\infty)$ one has $\repsn(t,\cdot)\to u(t,\cdot)$ strongly in $H^1(0,+\infty)$ as $n\to +\infty$, thus we deduce:
	\begin{align*}
		&\quad\,\,\Vert\epsn\uepsnt(t,\cdot)\Vert^2_{L^2(0,+\infty)}+\Vert\uepsn(t,\cdot)-u(t,\cdot)\Vert^2_{H^1(0,+\infty)}\\
		&\le C\left( \Vert\epsn\uepsnt(t,\cdot)\Vert^2_{L^2(0,+\infty)}+\Vert\uepsn(t,\cdot)-\repsn(t,\cdot)\Vert^2_{H^1(0,+\infty)}+\Vert\repsn(t,\cdot)-u(t,\cdot)\Vert^2_{H^1(0,+\infty)}\right)\\
		&\le C\left( \Vert\epsn\uepsnt(t,\cdot)\Vert^2_{L^2(0,+\infty)}+\Vert\uepsnx(t,\cdot)-r^\epsn_x(t,\cdot)\Vert^2_{L^2(0,+\infty)}+\Vert\repsn(t,\cdot)-u(t,\cdot)\Vert^2_{H^1(0,+\infty)}\right)\\
		&=C\left(\widetilde{\mc E}^\epsn(t)+\Vert\repsn(t,\cdot)-u(t,\cdot)\Vert^2_{H^1(0,+\infty)}\right),
	\end{align*}
	where we used Poincarè inequality. \par 
	To conclude it is enough to show that $\lim\limits_{n\to+\infty}\widetilde{\mc E}^\epsn(t)=0$ for every $t\in(0,+\infty)\diff J_\ell$. By \eqref{unifw} and \eqref{squares} this is equivalent to prove that:
	\begin{equation}\label{goal}
		\lim\limits_{n\to+\infty}{\mc E}^\epsn(t)=\frac 12 \frac{w(t)^2}{\ell(t)},\quad\text{ for every } t\in(0,+\infty)\diff J_\ell.
	\end{equation}
 By Proposition~\ref{energydecay} we know that \eqref{goal} holds true for a.e. $t\in(0,+\infty)$. To improve the result we then fix $t\in(0,+\infty)\diff J_\ell$ and we consider two sequences $\{s_j\}_{j\in\enne}$ and $\{t_j\}_{j\in\enne}$ such that $0<s_j\le t\le t_j$, the limit in \eqref{goal} holds true for $s_j$ and $t_j$ for every $j\in\enne$ and $s_j\nearrow t$, $t_j\searrow t$ as $j\to+\infty$. By the energy-dissipation balance \eqref{eb} we hence get:
 \begin{equation*}
 	\mc E^\epsn(t_j)+\int_{t}^{t_j}\wepsnd(\tau)\uepsnx(\tau,0)\d\tau\le\mc E^\epsn(t)\le	\mc E^\epsn(s_j)+\int_{t}^{s_j}\wepsnd(\tau)\uepsnx(\tau,0)\d\tau.
 \end{equation*}
 Passing to the limit as $n\to+\infty$ and exploiting Corollary~\ref{bordo} together with \eqref{unifw} we deduce:
 \begin{equation*}
 	\frac 12 \frac{w(t_j)^2}{\ell(t_j)}-\int_{t}^{t_j}\dot{w}(\tau)\frac{w(\tau)}{\ell(\tau)}\d\tau\le\liminf\limits_{n\to+\infty}\mc E^\epsn(t)\le\limsup\limits_{n\to+\infty}\mc E^\epsn(t)\le\frac 12 \frac{w(s_j)^2}{\ell(s_j)}-\int_{t}^{s_j}\dot{w}(\tau)\frac{w(\tau)}{\ell(\tau)}\d\tau.
 \end{equation*}
 Passing now to the limit as $j\to+\infty$, recalling that $t$ is a continuity point of $\ell$, we finally obtain:
 \begin{equation*}
 	\frac 12 \frac{w(t)^2}{\ell(t)}\le\liminf\limits_{n\to+\infty}\mc E^\epsn(t)\le\limsup\limits_{n\to+\infty}\mc E^\epsn(t)\le\frac 12 \frac{w(t)^2}{\ell(t)},
 \end{equation*}
 and so we conclude.
\end{proof}
We want to highlight that the viscous term in the wave equation forces the kinetic energy to vanish when $\eps\to0^+$. Indeed this phenomenon does not happen in \cite{LazNar}, where on the contrary the presence of a persistent kinetic energy due to lack of friction is the main reason why the convergence of $\ueps$ to an affine function occurs only in a weak sense (see Theorem~3.5 in \cite{LazNar}) and the limit pair $(u,\ell)$ fails to be a quasistatic evolution.
\subsection{Characterisation of the limit debonding front}
Our aim now is to understand if the limit function $\ell$ solves quasistatic Griffith's criterion. We thus need to pass to the limit in the dynamic Griffith's criterion \eqref{Griffith}. Next Proposition deals with the stability condition.
\begin{prop}\label{stabprop}
	Assume (H1), (K0), $\nu>0$ and let $\ell$ be the nondecreasing function obtained in Proposition~\ref{ellconv}. Then for every $0\le s\le t$ one has:
	\begin{equation*}
		 \frac 12\int_{s}^{t}\frac{w(\tau)^2}{\ell(\tau)^2}\d\tau\le\int_{s}^{t}\kappa(\ell(\tau))\d\tau,
		 \end{equation*}
	where $w$ is given by \eqref{unifw}.\\ 
	In particular the following inequalities hold true:
	\begin{subequations}
		\begin{equation}\label{stablimplus}
		\frac 12\frac{w(t)^2}{\ell^+(t)^2}\le \kappa(\ell^+(t)),\quad\text{for every }t\in[0,+\infty),
		\end{equation}
		\begin{equation}\label{stablimit}
		\frac 12\frac{w(t)^2}{\ell^-(t)^2}\le \kappa(\ell^-(t)),\quad\text{for every }t\in(0,+\infty),
		\end{equation}
	\end{subequations}
where $\ell^+$ and $\ell^-$ are the right limit and the left limit of $\ell$, respectively.
\end{prop}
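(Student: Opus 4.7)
The plan is to start from the pointwise dynamic stability $G^\eps_{\eps\ellepsd(\tau)}(\tau)\le\kappa(\elleps(\tau))$ contained in \eqref{Griffith}, integrate it over $[s,t]$, and pass to the limit along the subsequence $\epsn$ coming from \eqref{unifw} and Propositions~\ref{ellconv},~\ref{energydecay}. Using \eqref{expgrif}, we have the identity $G^\eps=\tfrac{1}{2}(1-\eps^2\ellepsd(\tau)^2)\uepsx(\tau,\elleps(\tau))^2$, which involves the trace of $\uepsx$ at the moving boundary. The main step is therefore to rewrite $\int_s^t G^\eps\,d\tau$ in terms of quantities whose limits we already control.

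To do this, I would multiply the damped wave equation by $\uepsx$ and integrate by parts on $(0,\elleps(\tau))$. The boundary relations $\ueps(\tau,0)=\weps(\tau)$ and $\ueps(\tau,\elleps(\tau))=0$ (the latter yielding $\uepst=-\uepsx\ellepsd$ at the front) reorganise all boundary contributions into the identity
\begin{equation*}
G^\eps(\tau)=\eps^2\frac{d}{d\tau}\int_0^{\elleps(\tau)}\uepst\uepsx\,d\sigma+\tfrac{1}{2}\eps^2\wepsd(\tau)^2+\tfrac{1}{2}\uepsx(\tau,0)^2+\nu\eps\int_0^{\elleps(\tau)}\uepst\uepsx\,d\sigma.
\end{equation*}
Integrating from $s$ to $t$ and letting $n\to\infty$: the first (telescoping) and second terms are $O(\epsn)$ thanks to the energy bound of Proposition~\ref{energyboundprop} and assumption (H1); the third term converges to $\tfrac{1}{2}\int_s^t w(\tau)^2/\ell(\tau)^2\,d\tau$ by the strong $L^2$-convergence \eqref{convboundary}. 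For the viscous bulk term I would split $\uepsx=(\uepsx-r^\eps_x)+r^\eps_x$ with $\reps$ as in \eqref{reps}: the first piece is controlled by Cauchy--Schwarz and $\|\widetilde{\mc E}^\epsn\|_{L^1(0,T)}\to 0$ from Proposition~\ref{energydecay}, whereas in the second $r^\eps_x$ is $\sigma$-independent, so $\int_0^{\elleps(\tau)}\uepst\,d\sigma=\frac{d}{d\tau}\int_0^{\elleps(\tau)}\ueps\,d\sigma$ (using $\ueps(\tau,\elleps(\tau))=0$), and one integration by parts in $\tau$ exposes the extra $\epsn$ factor needed for vanishing.

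Combining with the integrated dynamic stability $\int_s^t G^\epsn\,d\tau\le\int_s^t\kappa(\ellepsn)\,d\tau$, whose right-hand side tends to $\int_s^t\kappa(\ell)\,d\tau$ by dominated convergence (Proposition~\ref{ellconv}, the uniform bound $\ellepsn\le L_T$ from Corollary~\ref{ellbd}, and continuity of $\kappa$), yields the integral stability $\tfrac{1}{2}\int_s^t w^2/\ell^2\,d\tau\le\int_s^t\kappa(\ell(\tau))\,d\tau$. The pointwise inequalities \eqref{stablimplus} and \eqref{stablimit} then follow by dividing by $t-s$ and letting $t\to s^+$ or $s\to t^-$, using continuity of $w$ and $\kappa$ together with the existence of one-sided limits of the nondecreasing $\ell$. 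The main obstacle I anticipate is precisely the viscous bulk term: since $\nu\eps\uepst$ is only uniformly $L^2$-bounded, naive estimates fail, and it is the rewriting via $\reps$ combined with the decay of the modified energy $\widetilde{\mc E}^\eps$ that allows it to vanish in the limit.
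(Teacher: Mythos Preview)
Your argument is correct and takes a genuinely different route from the paper's.

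The paper works directly with the explicit one-dimensional representation: it writes $G^\epsn_{\epsn\ellepsnd}=2\frac{\dot\varphi^\epsn}{\dot\psi^\epsn}F^\epsn(\varphi^\epsn(\cdot))^2$ using \eqref{expgrif}, performs the change of variable $\sigma=\varphi^\epsn(\tau)$, and then exploits the algebraic identity \eqref{magic} together with \eqref{expux} to express $F^\epsn(\sigma)$ as $\tfrac12\eps\wepsd(\sigma)-\tfrac12\uepsx(\sigma,0)$ plus a friction remainder, which is shown to vanish via a direct $L^2$ estimate (your inequality would be the analogue of the paper's \eqref{estfriction}). The case $s=0$ then needs a separate treatment because $\varphi^\epsn(0)=-\epsn\ell_0<0$ forces one to control a boundary-layer integral over $(-\epsn\ell_0,0)$ using the explicit form of $F^\epsn$ there.

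Your approach bypasses the D'Alembert machinery entirely: the multiplier identity obtained from testing the equation against $\uepsx$ produces $G^\eps$ on the moving boundary and the trace $\tfrac12\uepsx(\cdot,0)^2$ on the fixed boundary in one stroke, and the viscous bulk term is killed by the splitting through $\reps$ combined with the $L^1$-decay of $\widetilde{\mc E}^\eps$ from Proposition~\ref{energydecay}. Both proofs ultimately hinge on Corollary~\ref{bordo} and Proposition~\ref{energydecay}; what you gain is a cleaner argument that does not need $\varphi^\eps,\psi^\eps,\omega^\eps$ or the separate handling of $s=0$, at the (minor) cost of having to justify the multiplier identity for $\widetilde H^1$ solutions by the same approximation argument the paper uses in Lemma~\ref{intpartslemma} (Remark~\ref{regularity} and Theorem~\ref{contdependence}). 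You should make that justification explicit, since the pointwise identity you write for $G^\eps(\tau)$ involves second derivatives and boundary traces that are only formal at the stated regularity.
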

	\begin{proof}
		Let $\epsn$ be the subsequence given by \eqref{unifw} and Proposition~\ref{ellconv}. By \eqref{expgrif} we know that for a.e. $t\in(0,+\infty)$ one has:
		\begin{equation}\label{dynenrate}
			G^\epsn_{\epsn\ellepsnd(t)}(t)=2\frac{1-\epsn\ellepsnd(t)}{1+\epsn\ellepsnd(t)}F^\epsn(t-\epsn\ellepsn(t))^2=2\frac{\dot\varphi^\epsn(t)}{\dot\psi^\epsn(t)}F^\epsn(\varphi^\epsn(t))^2,
		\end{equation}
		where we introduced the function:
		\begin{equation*}
			F^\epsn(\sigma)=\dot{f}^\epsn(\sigma)+\nu g^\epsn[\uepsnt](\sigma),\quad\text{for a.e. }\sigma\in(-\epsn\ell_0,\varphi^\epsn(+\infty)).
		\end{equation*}
		Here we adopt the notation $\varphi^\epsn(+\infty)=\lim\limits_{t\to+\infty}\varphi^\epsn(t)$, which exists since $\varphi^\epsn$ is strictly increasing. We want also to remark that $\varphi^\epsn(+\infty)>0$ for $n$ large enough (actually it diverges to $+\infty$ as $n\to+\infty$), indeed $\varphi^\epsn$ converges locally uniformly to the identity map as $n\to+\infty$ by Corollary~\ref{ellbd}. By means of \eqref{expux} and of the explicit form of $\dot f^\epsn$ and $g^\epsn[\uepsnt]$ in $(-\epsn\ell_0,0)$ we deduce that:
		\begin{equation*}
		F^\epsn(\sigma)\!=\!\begin{cases}
		\displaystyle\frac 12 \epsn\wepsnd(\sigma)-\frac 12\uepsnx(\sigma,0)+\nu\Big(g^\epsn[\uepsnt](\sigma)-\frac 12H^\epsn[\uepsnt]_x(\sigma,0)\Big),&\!\!\text{if }\sigma\in(0,\varphi^\epsn(+\infty)),\\
		\displaystyle\frac 12 \epsn u_1^\epsn\left({-}\frac\sigma\epsn\right)-\frac 12  \dot u_0^\epsn\left({-}\frac\sigma\epsn\right)-\frac \nu 2\int_{0}^{{\varphi^\epsn}^{-1}(\sigma)}\!\!\!\!\!\!\!\!\!\!\!\!\uepsnt\left(\tau,\frac{\tau-\sigma}{\epsn}\right)\d\tau,&\!\!\text{if }\sigma\in(-\epsn\ell_0,0).
		\end{cases}
		\end{equation*}
		Thus, thanks to \eqref{magic}, we obtain:
		\begin{equation}\label{F}
		\!\!F^\epsn(\sigma)\!=\!\begin{cases}
		\displaystyle\frac 12 \epsn\wepsnd(\sigma)-\frac 12\uepsnx(\sigma,0)-\frac \nu 2\int_{\sigma}^{{\varphi^\epsn}^{-1}(\sigma)}\!\!\!\!\!\!\!\!\!\!\!\!\uepsnt\left(\tau,\frac{\tau-\sigma}{\epsn}\right)\!\d\tau,&\!\!\!\!\!\!\!\!\!\!\!\text{if }\sigma\in(0,\varphi^\epsn(+\infty)),\\
		\displaystyle\frac 12 \epsn u_1^\epsn\!\left({-}\frac\sigma\epsn\right)\!-\!\frac 12 \dot u_0^\epsn\!\left({-}\frac\sigma\epsn\right)\!-\!\frac \nu 2\int_{0}^{{\varphi^\epsn}^{-1}(\sigma)}\!\!\!\!\!\!\!\!\!\!\!\!\uepsnt\left(\tau,\frac{\tau-\sigma}{\epsn}\right)\!\d\tau,&\!\!\text{if }\sigma\in(-\epsn\ell_0,0).
		\end{cases}
		\end{equation} 
		By the stability condition in dynamic Griffith's criterion \eqref{Griffith} we hence deduce that for every $0\le s\le t$ one has:
		\begin{align*}
			\int_{s}^{t}\kappa(\ellepsn(\tau))\d\tau&\ge\int_{s}^{t}G^\epsn_{\epsn\ellepsnd(\tau)}(\tau)\d\tau=2\int_{s}^{t}\frac{\dot\varphi^\epsn(\tau)}{\dot\psi^\epsn(\tau)}F^\epsn(\varphi^\epsn(\tau))^2\d\tau\\
			&=\int_{\varphi^\epsn(s)}^{\varphi^\epsn(t)}\frac{2}{\dot\psi^\epsn({\varphi^\epsn}^{-1}(\sigma))}F^\epsn(\sigma)^2\d\sigma=:I^\epsn(s,t).
		\end{align*}
		Thus, by dominated convergence we infer:
		\begin{equation*}
			\int_{s}^{t}\kappa(\ell(\tau))\d\tau\ge\limsup\limits_{n\to+\infty}I^\epsn(s,t).
		\end{equation*}
		We actually prove that the limit in the right-hand side exists and it holds:
		\begin{equation}\label{aim}
			\lim\limits_{n\to+\infty}I^\epsn(s,t)=\frac 12\int_{s}^{t}\frac{w(\tau)^2}{\ell(\tau)^2}\d\tau.
		\end{equation}
		If \eqref{aim} is true, then we conclude; to prove it we reason as follows. We first assume $s>0$, so that $\varphi^\epsn(s)>0$ (for $n$ large enough) and we can write:
		\begin{equation*}
			I^\epsn(s,t)=\frac 12\int_{0}^{t}\frac{\chi_{[\varphi^\epsn(s),\varphi^\epsn(t)]}(\sigma)}{\dot\psi^\epsn({\varphi^\epsn}^{-1}(\sigma))}\Big(2F^\epsn(\sigma)\Big)^2\chi_{[0,\varphi^\epsn(t)]}(\sigma)\d\sigma.
		\end{equation*}
		By means of the properties of $\varphi^\epsn$ and $\psi^\epsn$, see \eqref{phipsidef} and the subsequent discussion, and recalling Corollary~\ref{ellbd} it is easy to see that the function $\displaystyle a^\epsn(\sigma):=\frac{\chi_{[\varphi^\epsn(s),\varphi^\epsn(t)]}(\sigma)}{\dot\psi^\epsn({\varphi^\epsn}^{-1}(\sigma))}$ satisfies $\Vert a^\epsn\Vert_{L^\infty(0,t)}\le 1$ and $a^\epsn\to \chi_{[s,t]}$ in $L^1(0,t)$ as $n\to +\infty$. So we conclude if we prove that:
		\begin{equation}\label{aim2}
			2F^\epsn\chi_{[0,\varphi^\epsn(t)]}\to \frac{w}{\ell}, \text{ in }L^2(0,t)\text{ as }n\to+\infty,
		\end{equation}
		since the function $w/\ell$ belongs to $L^\infty(0,t)$. To prove \eqref{aim2} we estimate:
		\begin{align*}
			&\quad\left\Vert2F^\epsn\chi_{[0,\varphi^\epsn(t)]}-\frac w\ell\right\Vert_{L^2(0,t)}\\
			&\le \epsn\Vert\wepsnd\Vert_{L^2(0,t)}+\left\Vert\uepsnx(\cdot,0)+\frac w\ell\right\Vert_{L^2(0,t)}\!\!\!\!+\nu\left(\int_{0}^{\varphi^\epsn(t)}\left(\int_{\sigma}^{{\varphi^\epsn}^{-1}(\sigma)}\!\!\!\!\!\!\!\!\!\!\!\uepsnt\left(\tau,\frac{\tau-\sigma}{\epsn}\right)\!\d\tau\right)^2\!\!\!\!\d\sigma\right)^\frac 12+C\epsn.
		\end{align*}
		By (H1) and \eqref{convboundary} the first and the second term go to zero as $n\to +\infty$. For the third one we continue the estimate:
		\begingroup\allowdisplaybreaks
		\begin{align}\label{estfriction}
			&\quad\int_{0}^{\varphi^\epsn(t)}\left(\int_{\sigma}^{{\varphi^\epsn}^{-1}(\sigma)}\!\!\!\!\!\!\!\!\!\!\!\uepsnt\left(\tau,\frac{\tau-\sigma}{\epsn}\right)\!\d\tau\right)^2\!\!\!\!\d\sigma\nonumber\\
			&\le\int_{0}^{\varphi^\epsn(t)}\!\!\!\!\!\!\!\!\!\!\!\!\!\!\!\!\big({\varphi^\epsn}^{-1}(\sigma)-\sigma\big)\!\!\int_{\sigma}^{{\varphi^\epsn}^{-1}(\sigma)}\!\!\!\!\!\!\!\!\!\!\!\uepsnt\left(\tau,\frac{\tau-\sigma}{\epsn}\right)^2\!\!\!\!\!\d\tau\d\sigma=\int_{0}^{\varphi^\epsn(t)}\!\!\!\!\!\!\!\!\!\!\!\!\!\!\epsn\ellepsn({\varphi^\epsn}^{-1}(\sigma))\int_{\sigma}^{{\varphi^\epsn}^{-1}(\sigma)}\!\!\!\!\!\!\!\!\!\!\!\uepsnt\left(\tau,\frac{\tau-\sigma}{\epsn}\right)^2\!\!\!\!\!\d\tau\d\sigma\nonumber\\
			&\le C_t\int_{0}^{\varphi^\epsn(t)}\int_{0}^{t}\epsn\uepsnt\left(\tau,\frac{\tau-\sigma}{\epsn}\right)^2\chi_{[\sigma,{\varphi^\epsn}^{-1}(\sigma)]}(\tau)\d\tau\d\sigma\\
			&=C_t\int_{0}^{t}\int_{0}^{\varphi^\epsn(t)}\epsn\uepsnt\left(\tau,\frac{\tau-\sigma}{\epsn}\right)^2\chi_{[\sigma,{\varphi^\epsn}^{-1}(\sigma)]}(\tau)\d\sigma\d\tau\nonumber\\
			&\le C_t\int_{0}^{t}\int_{0}^{\ellepsn(\tau)}\epsn^2\uepsnt(\tau,\sigma)^2\d\sigma\d\tau=\epsn \frac{C_t}{\nu}\mc A^\epsn(t),\nonumber
		\end{align}
		\endgroup
		which goes to zero by \eqref{energybound}, and we conclude in the case $s>0$.\par 
		If instead $s=0$ we can write:
		\begin{align*}
			I^\epsn(0,t)=&\frac 12\int_{-\epsn\ell_0}^{0}\frac{1}{\dot\psi^\epsn({\varphi^\epsn}^{-1}(\sigma))}\left[\epsn u_1^\epsn\!\left({-}\frac\sigma\epsn\right)\!-\! \dot u_0^\epsn\!\left({-}\frac\sigma\epsn\right)\!-\! \nu \int_{0}^{{\varphi^\epsn}^{-1}(\sigma)}\!\!\!\!\!\!\!\!\!\!\!\!\uepsnt\left(\tau,\frac{\tau-\sigma}{\epsn}\right)\!\d\tau\right]^2\!\!\!\d\sigma\\
			+&\frac 12\int_{0}^{t}\frac{1}{\dot\psi^\epsn({\varphi^\epsn}^{-1}(\sigma))}\Big(2F^\epsn(\sigma)\Big)^2\chi_{[0,\varphi^\epsn(t)]}(\sigma)\d\sigma.
		\end{align*}
		Reasoning as before one can show that the second term goes to $\displaystyle\frac 12\int_{0}^{t}\frac{w(\tau)^2}{\ell(\tau)^2}\d\tau$ as $n\to+\infty$, so we conclude if we prove that the first one, denoted by $J^\epsn$, vanishes in the limit. To this aim we estimate:
		\begin{align*}
			J^\epsn&\le C\int_{-\epsn\ell_0}^{0}\left[\epsn^2u_1^\epsn\left({-}\frac\sigma\epsn\right)^2+\dot u_0^\epsn\left({-}\frac\sigma\epsn\right)^2+\nu\epsn\int_{0}^{{\varphi^\epsn}^{-1}(\sigma)}\!\!\!\!\!\!\!\!\!\!\!\!\uepsnt\left(\tau,\frac{\tau-\sigma}{\epsn}\right)^2\!\d\tau\right]\d\sigma\\
			&\le \epsn C\left(\Vert\epsn u_1^\epsn\Vert^2_{L^2(0,\ell_0)}+\Vert\dot{u}^\epsn_0\Vert^2_{L^2(0,\ell_0)}+\mc A^\epsn({\varphi^\epsn}^{-1}(0))\right).
		\end{align*}
		We thus conclude by means of (H1) and \eqref{energybound}, since ${\varphi^\epsn}^{-1}(0)$ is uniformly bounded with respect to $\epsn$ thanks to Corollary~\ref{ellbd}.
	\end{proof}\noindent
Now we pass to the limit in the energy-dissipation balance \eqref{eb}.
\begin{prop}\label{mu}
	Assume (H1), (K0), $\nu>0$ and let $w$ and $\ell$ be given by \eqref{unifw} and Proposition~\ref{ellconv}, respectively. Then there exists a positive measure $\mu$ on $[0,+\infty)$ for which the following equality holds true for every $t\in[0,+\infty)$:
	\begin{equation*}
		\frac 12\frac{w(t)^2}{\ell^+(t)}+\int_{\ell_0}^{\ell^+(t)}\!\!\!\!\!\!\!\!\!\!\kappa(\sigma)\d\sigma-\int_{0}^{t}\!\!\!\dot{w}(\tau)\frac{w(\tau)}{\ell(\tau)}\d\tau+\mu([0,t])=\liminf\limits_{n\to+\infty}\left(\!\frac 12\!\int_{0}^{\ell_0}\!\!\!\!\!\epsn^2 u_1^\epsn(\sigma)^2\d\sigma+\frac 12\int_{0}^{\ell_0}\!\!\!\!\!\dot{u}_0^\epsn(\sigma)^2\d\sigma\!\right),
	\end{equation*}
	where $\epsn$ is the subsequence given by \eqref{unifw} and by Propositions~\ref{ellconv} and \ref{energydecay}.\\
	Moreover for every $0<s\le t$ one has:
	\begin{equation}\label{inequality}
		\frac 12\frac{w(t)^2}{\ell^+(t)}+\int_{\ell_0}^{\ell^+(t)}\!\!\!\!\!\!\!\!\!\!\!\kappa(\sigma)\d\sigma-\int_{0}^{t}\!\!\!\dot{w}(\tau)\frac{w(\tau)}{\ell(\tau)}\d\tau+\mu([s,t])=\frac 12\frac{w(s)^2}{\ell^-(s)}+\int_{\ell_0}^{\ell^-(s)}\!\!\!\!\!\!\!\!\!\!\!\kappa(\sigma)\d\sigma-\int_{0}^{s}\!\!\!\dot{w}(\tau)\frac{w(\tau)}{\ell(\tau)}\d\tau.
	\end{equation}
\end{prop}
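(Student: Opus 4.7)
My plan is to pass to the limit, along a further-extracted subsequence, in the dynamic energy-dissipation balance \eqref{eb}, identifying $\mu$ as a Helly-type limit of the monotone friction functionals $\mc A^{\epsn}$. First, from the subsequence $\epsn$ I would extract a further subsequence (not relabelled) along which $\mc E^{\epsn}(0)$ converges to $E_0 := \liminf_n \mc E^{\epsn}(0)$. The maps $A^{\epsn}(t) := \mc A^{\epsn}(t)$ are nondecreasing with $A^{\epsn}(0) = 0$ and, by Proposition~\ref{energyboundprop}, uniformly bounded on every compact interval, so Helly's selection principle together with a diagonal argument in $T$ produces a further subsequence (not relabelled) and a nondecreasing $A \colon [0,+\infty) \to [0,+\infty)$ with $A^{\epsn}(t) \to A(t)$ for every $t \ge 0$. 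I would then define $\bar A(t) := \lim_{s \searrow t} A(s)$ and let $\mu$ be the positive Borel measure on $[0,+\infty)$ associated with $\bar A$, so that $\mu([0,t]) = \bar A(t)$, $\mu([0,s)) = \bar A(s^-)$ and $\mu([s,t]) = \bar A(t) - \bar A(s^-)$.

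The second step is to pass to the limit in \eqref{eb} at every $t$ lying outside the countable union of $J_\ell$ with the jump set of $A$. At such times: Theorem~\ref{convu} gives $\mc E^{\epsn}(t) \to \tfrac 12 w(t)^2/\ell(t)$; continuity of $\kappa$ combined with $\ellepsn(t) \to \ell(t)$ gives $\int_{\ell_0}^{\ellepsn(t)} \kappa \to \int_{\ell_0}^{\ell(t)} \kappa$; the weak convergence $\wepsnd \rightharpoonup \dot w$ in $L^2(0,t)$ coming from \eqref{unifw}, together with the strong convergence $\uepsnx(\cdot,0) \to -w/\ell$ in $L^2(0,t)$ supplied by Corollary~\ref{bordo}, yields $\mc W^{\epsn}(t) \to -\int_0^t \dot w(\tau) w(\tau)/\ell(\tau) \d\tau$; and $A^{\epsn}(t) \to A(t) = \bar A(t) = \mu([0,t])$ by construction. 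This produces the identity
\begin{equation*}
\tfrac 12 \frac{w(t)^2}{\ell(t)} + \int_{\ell_0}^{\ell(t)} \kappa(\sigma) \d\sigma - \int_0^t \dot w(\tau) \frac{w(\tau)}{\ell(\tau)} \d\tau + \mu([0,t]) = E_0 .
\end{equation*}

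To extend to every $t \ge 0$, I would approximate from the right by $t_k \searrow t$ in the good set of the previous step: each term on the left is right-continuous ($\ell(t_k) \to \ell^+(t)$, the work integral is absolutely continuous in its upper limit, and $A(t_k) = \bar A(t_k) \to \bar A(t) = \mu([0,t])$), so the identity passes to the limit with $\ell^+(t)$ replacing $\ell(t)$. For \eqref{inequality} I would fix $0 < s \le t$ and repeat the procedure with $s_k \nearrow s$ inside the good set, obtaining the left-limit identity
\begin{equation*}
\tfrac 12 \frac{w(s)^2}{\ell^-(s)} + \int_{\ell_0}^{\ell^-(s)} \kappa(\sigma) \d\sigma - \int_0^s \dot w(\tau) \frac{w(\tau)}{\ell(\tau)} \d\tau + \mu([0,s)) = E_0 ;
\end{equation*}
subtracting this from the identity at $t$ and using $\mu([s,t]) = \mu([0,t]) - \mu([0,s))$ yields \eqref{inequality}.

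The hardest ingredient is the limit of the work term $\mc W^{\epsn}$: its integrand is the product of the only weakly convergent loadings $\wepsnd$ with the boundary fluxes $\uepsnx(\cdot,0)$, and the passage to the limit relies crucially on the strong $L^2$ convergence of the latter provided by Corollary~\ref{bordo}, which in turn is available only because viscosity is active ($\nu>0$). A secondary subtlety is that $\ell$ and the Helly limit $A$ may jump at the same instant; the asymmetric appearance of $\ell^+$, $\ell^-$ and of the closed interval $[s,t]$ in the statement is tailored precisely to absorb such coincidences.
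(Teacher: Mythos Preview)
Your argument is correct and complete; all the key ingredients (Theorem~\ref{convu} for the energy, Corollary~\ref{bordo} for the boundary flux, Helly for the friction) are exactly what is needed, and the right/left limit extension is handled carefully. It is, however, a genuinely different route from the paper's.

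The paper never extracts a Helly subsequence from $\mc A^{\epsn}$. Instead it defines directly the function
\[
f(t)=\frac 12\frac{w(t)^2}{\ell(t)}+\int_{\ell_0}^{\ell(t)}\kappa(\sigma)\d\sigma-\int_{0}^{t}\dot w(\tau)\frac{w(\tau)}{\ell(\tau)}\d\tau
\]
for $t>0$, extended by the constant $E_0$ on a left neighbourhood of $0$, and shows that $f^+$ is nonincreasing; then $\mu:=-Df$. The monotonicity is obtained by \emph{dropping} the nonnegative friction term in \eqref{eb} (so only an inequality is used), choosing $s_j\searrow s$, $t_j\searrow t$ outside $J_\ell$, passing to the limit with Theorem~\ref{convu} and Corollary~\ref{bordo}, and then letting $j\to+\infty$. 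No further subsequence is needed because nothing beyond $w$, $\ell$ and the liminf $E_0$ enters. Your approach, by contrast, keeps the full equality in \eqref{eb} and explicitly identifies $\mu$ as the (right-continuous modification of the) Helly limit of $\mc A^{\epsn}$; this forces two further extractions (one for $\mc E^{\epsn}(0)\to E_0$, one for Helly), which are harmless since the resulting $\mu$ is uniquely determined by the final identity and hence independent of the further extraction. What you gain is a concrete interpretation of $\mu$ as the limit dissipation---precisely the heuristic the paper mentions right after the Proposition (``it can be seen as a weak$^*$-limit of $\mc A^\eps$''). What the paper's approach gains is brevity: no need to track the friction term at all, and no further subsequences to justify.
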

\begin{proof}
	By classical properties of BV functions in one variable (see for instance \cite{AmFuPa}, Theorem~3.28) it is enough to prove that the function $f\colon(-\delta,+\infty)\to \erre$ defined as:
	\begin{equation*}
		f(t):=\begin{cases}\displaystyle
		\liminf\limits_{n\to+\infty}\left(\frac 12\int_{0}^{\ell_0}\!\!\!\!\!\epsn^2 u_1^\epsn(\sigma)^2\d\sigma+\frac 12\int_{0}^{\ell_0}\!\!\!\!\!\dot{u}_0^\epsn(\sigma)^2\d\sigma\right),&\text{if }t\in(-\delta,0],\\
		\displaystyle\frac 12\frac{w(t)^2}{\ell(t)}+\int_{\ell_0}^{\ell(t)}\!\!\!\!\!\!\!\!\!\kappa(\sigma)\d\sigma-\int_{0}^{t}\!\!\!\dot{w}(\tau)\frac{w(\tau)}{\ell(\tau)}\d\tau,&\text{if }t\in(0,+\infty),
		\end{cases}
	\end{equation*}
	belongs to the Lebesgue class of a nonincreasing function. Indeed in that case $\mu:=-Df$ does the job.\par 
	We actually prove that the right limit $f^+$ is nonincreasing. We fix $s,t\in(-\delta,+\infty)$ such that $s<t$ and we consider all the possible cases.\par 
	If $s\ge 0$ we pick two sequences $\{s_j\}_{j\in\enne}$, $\{t_j\}_{j\in\enne}$ such that for every $j\in\enne$ one has $s<s_j<t<t_j$, $s_j$ and $t_j$ do not belong to the jump set of $\ell$, and $s_j\searrow s$, $t_j\searrow t$ as $j\to +\infty$. By the energy-dissipation balance \eqref{eb} we hence get:
	\begin{equation*}
	\mc E^\epsn(t_j)+\int_{\ell_0}^{\ellepsn(t_j)}\!\!\!\!\!\!\!\!\!\!\!\!\kappa(\sigma)\d\sigma+\int_{0}^{t_j}\wepsnd(\tau)\uepsnx(\tau,0)\d\tau\le\mc E^\epsn(s_j)+\int_{\ell_0}^{\ellepsn(s_j)}\!\!\!\!\!\!\!\!\!\!\!\!\kappa(\sigma)\d\sigma+\int_{0}^{s_j}\wepsnd(\tau)\uepsnx(\tau,0)\d\tau.
	\end{equation*}
	Passing to the limit as $n\to+\infty$, by Theorem~\ref{convu} and by exploiting Corollary~\ref{bordo} together with \eqref{unifw} we deduce:
	\begin{equation*}
	\frac 12 \frac{w(t_j)^2}{\ell(t_j)}+\int_{\ell_0}^{\ell(t_j)}\!\!\!\!\!\!\!\!\kappa(\sigma)\d\sigma-\int_{0}^{t_j}\dot{w}(\tau)\frac{w(\tau)}{\ell(\tau)}\d\tau\le\frac 12 \frac{w(s_j)^2}{\ell(s_j)}+\int_{\ell_0}^{\ell(s_j)}\!\!\!\!\!\!\!\!\kappa(\sigma)\d\sigma-\int_{0}^{s_j}\dot{w}(\tau)\frac{w(\tau)}{\ell(\tau)}\d\tau.
	\end{equation*}
	Passing now to the limit as $j\to+\infty$ we get $f^+(t)\le f^+(s)$.\par 
	If $s\in(-\delta,0)$ and $t\ge0$ we consider a sequence $\{t_j\}_{j\in\enne}$ as before and by means of the energy-dissipation balance we infer:
	\begin{equation*}
	\mc E^\epsn(t_j)+\int_{\ell_0}^{\ellepsn(t_j)}\!\!\!\!\!\!\!\!\!\!\!\!\kappa(\sigma)\d\sigma+\int_{0}^{t_j}\wepsnd(\tau)\uepsnx(\tau,0)\d\tau\le\mc E^\epsn(0)=\frac 12\int_{0}^{\ell_0}\!\!\!\!\!\epsn^2 u_1^\epsn(\sigma)^2\d\sigma+\frac 12\int_{0}^{\ell_0}\!\!\!\!\!\dot{u}_0^\epsn(\sigma)^2\d\sigma.
	\end{equation*}
	Passing to the limit as $n\to+\infty$ and then $j\to +\infty$ we hence deduce that also in this case $f^+(t)\le f^+(s)$.\par 
	If finally both $s$ and $t$ belong to $(-\delta,0)$, then trivially $f^+(t)=f^+(s)$ and so we conclude.
\end{proof}
The measure $\mu$ introduced in the previous Theorem somehow represents the amount of energy dissipated by viscosity which still is present in the limit. Indeed it can be seen as a weak$^*$-limit of $\mc A^\eps$ as $\eps\to 0^+$. The rise of such a limit measure occurs also in \cite{Roub} in a model of contact between two visco-elastic bodies. Of course, to obtain the desired quasistatic energy-dissipation balance (eb) we need to prove that $\mu\equiv 0$, namely that $\mc A^\eps$ vanishes as $\eps\to 0^+$. To this aim, we first prove that the limit debonding front $\ell$ is a continuous function; this is, however, a crucial step for getting (eb) from \eqref{inequality}. As in Section \ref{sec2}, to reach the result we need to strenghten the assumptions on the toughness $\kappa$.
\begin{cor}\label{contelle}
	Assume (H1), (K0), (K2) and let $\nu$ be positive. Then the nondecreasing function $\ell$ given by Proposition~\ref{ellconv} is continuous in $(0,+\infty)$.
\end{cor}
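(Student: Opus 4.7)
The plan is to reduce the statement to (a slight variant of) Lemma~\ref{cont} by combining the stability information encoded in Proposition~\ref{stabprop} with the one-sided energy inequality contained in \eqref{inequality}, and then to argue exactly as in the proof of Lemma~\ref{cont}. The role played there by hypothesis (eb) is here taken over by the nonincreasing behaviour expressed in Remark~\ref{alsoinequality}.

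First, I would argue by contradiction: suppose there exists $\bar t\in(0,+\infty)$ such that $\ell^-(\bar t\,)<\ell^+(\bar t\,)$. Applying \eqref{inequality} at $s=t=\bar t$ and using that $\mu$ is a positive measure (in particular $\mu(\{\bar t\,\})\ge 0$) I would obtain
\begin{equation*}
\frac 12\frac{w(\bar t\,)^2}{\ell^+(\bar t\,)}+\int_{\ell_0}^{\ell^+(\bar t\,)}\!\!\!\kappa(\sigma)\d\sigma\le\frac 12\frac{w(\bar t\,)^2}{\ell^-(\bar t\,)}+\int_{\ell_0}^{\ell^-(\bar t\,)}\!\!\!\kappa(\sigma)\d\sigma,
\end{equation*}
which, after rearranging and writing the right-hand side as an integral of $w(\bar t\,)^2/(2\sigma^2)$ between $\ell^-(\bar t\,)$ and $\ell^+(\bar t\,)$, becomes
\begin{equation*}
\int_{\ell^-(\bar t\,)}^{\ell^+(\bar t\,)}\frac{\phi_\kappa(\sigma)-w(\bar t\,)^2/2}{\sigma^2}\d\sigma\le 0.
\end{equation*}

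Second, I would use the stability inequality \eqref{stablimit} from Proposition~\ref{stabprop}, which gives $\phi_\kappa(\ell^-(\bar t\,))\ge\tfrac 12 w(\bar t\,)^2$, together with assumption (K2), which guarantees that $\phi_\kappa$ is strictly increasing on $[\ell_0,+\infty)$. Consequently $\phi_\kappa(\sigma)>\tfrac 12w(\bar t\,)^2$ for every $\sigma\in(\ell^-(\bar t\,),\ell^+(\bar t\,)]$, so the integrand above is strictly positive on a set of positive Lebesgue measure. This contradicts the displayed inequality and therefore rules out the existence of a jump point in $(0,+\infty)$.

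The main conceptual step is really already contained in Proposition~\ref{mu}: one has to make sure that, even without the full energy balance (eb) of Section~\ref{sec2}, the one-sided inequality \eqref{inequality} suffices to run the argument of Lemma~\ref{cont}. This is precisely the content of Remark~\ref{alsoinequality}. No further obstacle is foreseen: the proof should essentially boil down to the two short computations above and an appeal to (K2) together with \eqref{stablimit}.
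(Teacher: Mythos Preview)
Your proposal is correct and follows essentially the same route as the paper: the paper's proof simply says to argue as in Lemma~\ref{cont}, replacing (s2) by \eqref{stablimit} and (eb) by the one-sided inequality coming from \eqref{inequality} (cf.\ Remark~\ref{alsoinequality}), which is exactly what you carry out in detail.
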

\begin{proof}
	The result follows arguing as in the proof of Lemma~\ref{cont} by means of \eqref{stablimit} and \eqref{inequality}; see also Remark~\ref{alsoinequality}.
\end{proof}
\begin{prop}\label{ebquas}
	Assume (H1), (K0), (K3) and let $\nu$ be positive. Then the following energy-dissipation balance holds true for the nondecreasing function $\ell$ obtained in Proposition~\ref{ellconv}:
	\begin{equation}\label{equality}
		\frac 12\frac{w(t)^2}{\ell(t)}+\int_{\ell^+(0)}^{\ell(t)}\kappa(\sigma)\d\sigma-\int_{0}^{t}\dot{w}(\tau)\frac{w(\tau)}{\ell(\tau)}\d\tau=\frac12\frac{w(0)^2}{\ell^+(0)},\quad\text{for every }t\in(0,+\infty),
	\end{equation}
	where $w$ is given by \eqref{unifw}.
\end{prop}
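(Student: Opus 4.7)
The plan is to deduce \eqref{equality} from Proposition~\ref{mu} by showing that the measure $\mu$ vanishes on $(0,+\infty)$. Set
\[ E(t):=\frac 12\frac{w(t)^2}{\ell(t)}+\int_{\ell_0}^{\ell(t)}\kappa(\sigma)\d\sigma-\int_{0}^{t}\dot{w}(\tau)\frac{w(\tau)}{\ell(\tau)}\d\tau,\quad t\in(0,+\infty), \]
so that, by Corollary~\ref{contelle} (which identifies $\ell^\pm$ with $\ell$ on $(0,+\infty)$), relation \eqref{inequality} reads $E(t)+\mu([s,t])=E(s)$ for $0<s\le t$. In particular $E$ is nonincreasing on $(0,+\infty)$, and Proposition~\ref{stabprop} supplies the pointwise stability $\frac 12 w(t)^2/\ell(t)^2\le\kappa(\ell(t))$ for every $t>0$.

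Fix $s_0>0$ and look at $\ell$ on $[s_0,+\infty)$. This restriction is nondecreasing, continuous, starts at $\ell(s_0)$, satisfies the stability bound above, and makes $E$ nonincreasing. These are exactly the hypotheses (in the weakened form \eqref{nonincr}) of Lemmas~\ref{cont} and~\ref{constant} and of Proposition~\ref{explicitlambda}, with $\ell_0$ replaced by $\ell(s_0)$ and the initial time $0$ replaced by $s_0$: the proofs go through verbatim, cf.\ Remarks~\ref{alsoinequality} and~\ref{alsoinequality2}. Under (K3) I therefore obtain the explicit representation
\[ \ell(t)=\phi_\kappa^{-1}\!\left(\max\Bigl\{\tfrac 12 M_{s_0}(t),\,\phi_\kappa(\ell(s_0))\Bigr\}\right),\quad t\in[s_0,+\infty), \]
where $M_{s_0}(t):=\max_{\tau\in[s_0,t]}w(\tau)^2$. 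Exactly as in the proof of Theorem~\ref{exuniqquas}, (K3) upgrades this formula to absolute continuity of $\ell$ on every $[s_0,T]$, with $\dot\ell$ concentrated on the set $\{w^2=M_{s_0}>2\phi_\kappa(\ell(s_0))\}$.

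Since $\ell$ is absolutely continuous and bounded away from zero on $[s_0,T]$, so is $E$, and a direct differentiation gives
\[ \dot E(t)=\left(\kappa(\ell(t))-\frac 12\frac{w(t)^2}{\ell(t)^2}\right)\dot\ell(t)\quad\text{for a.e. }t\in(s_0,T). \]
At any point where $\dot\ell(t)>0$, the explicit formula for $\ell$ forces $w(t)^2=M_{s_0}(t)>2\phi_\kappa(\ell(s_0))$, hence $\phi_\kappa(\ell(t))=\frac 12 w(t)^2$; this saturates the stability bound and makes the bracket vanish. Therefore $\dot E\equiv 0$ a.e.\ on $(s_0,T)$, and by absolute continuity $E$ is constant there; this means $\mu([s_0,T])=0$. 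As $s_0$ and $T$ are arbitrary, $\mu\equiv 0$ on $(0,+\infty)$.

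To conclude, I would let $s\to 0^+$ in \eqref{inequality}. Continuity of $\ell$ on $(0,+\infty)$ gives $\ell^-(s)=\ell(s)\to\ell^+(0)$, continuity of $w$ gives $w(s)\to w(0)$, and $\int_0^s\dot w(\tau)w(\tau)/\ell(\tau)\d\tau\to 0$. Combined with $\mu((0,t])=0$ this yields
\[ \frac 12\frac{w(t)^2}{\ell(t)}+\int_{\ell_0}^{\ell(t)}\kappa(\sigma)\d\sigma-\int_{0}^{t}\dot{w}(\tau)\frac{w(\tau)}{\ell(\tau)}\d\tau=\frac 12\frac{w(0)^2}{\ell^+(0)}+\int_{\ell_0}^{\ell^+(0)}\kappa(\sigma)\d\sigma, \]
which rearranges into \eqref{equality}. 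The delicate point is the second paragraph: transferring the Section~\ref{sec2} machinery to a shifted initial time and exploiting (K3) to get absolute continuity of $\ell$ on every compact subinterval of $(0,+\infty)$, which is what enables the Griffith-type cancellation $\dot E\equiv 0$ in the third paragraph.
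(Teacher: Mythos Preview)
Your proposal is correct and follows essentially the same route as the paper: both use Corollary~\ref{contelle}, the stability inequality from Proposition~\ref{stabprop}, and the nonincreasing property of $E$ coming from \eqref{inequality} to feed the Section~\ref{sec2} machinery (Proposition~\ref{explicitlambda} in its weakened form, Remark~\ref{alsoinequality2}) and pin down the explicit formula for $\ell$, after which the energy balance follows. The only cosmetic difference is that the paper applies Proposition~\ref{explicitlambda} once with starting point $\ell^+(0)$ and then invokes Theorem~\ref{exuniqquas} as a black box (which already packages the absolute continuity and the Griffith-type cancellation $\dot E\equiv 0$), whereas you shift the origin to each $s_0>0$ and unroll that last step by hand; the content is the same.
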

\begin{proof}
	By Corollary~\ref{contelle} we know $\ell$ is continuous on $(0,+\infty)$, by \eqref{stablimplus} we deduce $\ell$ satisfies stability condition (s2) in $(0,+\infty)$, while by \eqref{inequality} the function
	\begin{equation*}
		t\mapsto	\frac 12\frac{w(t)^2}{\ell(t)}+\int_{\ell_0}^{\ell(t)}\kappa(\sigma)\d\sigma-\int_{0}^{t}\dot{w}(\tau)\frac{w(\tau)}{\ell(\tau)}\d\tau,
	\end{equation*}
	is nonincreasing in $(0,+\infty)$. Thus, by Proposition~\ref{explicitlambda} and Remark~\ref{alsoinequality2} we deduce that $\ell$ has the form \eqref{solution}, with $\ell^+(0)$ in place of $\ell_0$. By (K3) and by means of Theorem~\ref{exuniqquas} we hence conclude. Indeed we point out that, under our assumptions, condition (KW) is automatically satisfied: by (K0) and (K2) we deduce $\lim\limits_{x\to+\infty}\phi_\kappa(x)=+\infty$ and by \eqref{stablimplus} we have $\phi_\kappa(\ell^+(0))\ge\frac 12 w(0)^2$.
\end{proof}
Previous Proposition shows that the measure $\mu$ introduced in Theorem~\ref{mu} is concentrated on the singleton $\{0\}$. This means that friction dissipates all the initial energy at the initial time $t=0$.
Up to now we have thus proved that, under suitable assumptions, the limit pair $(u,\ell)$ is a quasistatic evolution starting from the point $\ell^+(0)$. The aim of the next subsection will be characterise the value $\ell^+(0)$.
\subsection{The initial jump}
In this subection we show that the (possible) initial jump of the limit debonding front $\ell$ is characterised by the equality $\ell^+(0)=\lim\limits_{t\to+\infty}\tilde{\ell}(t)$, where $\tilde{\ell}$ is the debonding front related to the unrescaled dynamic coupled problem:
	\begin{equation}
\label{problemtilde}
\begin{cases}
\tilde{u}_{tt}(t,x)-\tilde{u}_{xx}(t,x)+\nu \tilde{u}_t(t,x)=0, \quad& t > 0 \,,\, 0<x<\tilde{\ell}(t),  \\
\tilde{u}(t,0)=w(0), &t>0, \\
\tilde{u}(t,\tilde{\ell}(t))=0,& t>0,\\
\tilde{u}(0,x)=u_0(x),\quad&0<x<\ell_0,\\
\tilde{u}_t(0,x)=0,&0<x<\ell_0,
\end{cases}
\end{equation}
\begin{equation}\label{Griffithtilde}
\begin{cases}
0\le\dot{\tilde{\ell}}(t)<1,\\
{G}_{\dot{\tilde{\ell}}(t)}(t)\le\kappa(\tilde{\ell}(t)),\\
\left[{G}_{\dot{\tilde{\ell}}(t)}(t)-\kappa(\tilde{\ell}(t))\right]\dot{\tilde{\ell}}(t)=0,
\end{cases}\quad\quad\text{ for a.e. }t\in(0,+\infty).
\end{equation}

Here we are assuming that $u_0\in H^1(0,\ell_0)$ satisfies $u_0(0)=w(0)$ and $u_0(\ell_0)=0$. Moreover, as before, we consider $\nu>0$ and a positive toughness $\kappa$ which belongs to $\widetilde{C}^{0,1}([\ell_0,+\infty))$. We also need to introduce stronger conditions than (H1):
\begin{itemize}
	\item[(H2)] the family $\{\weps\}_{\eps>0}$ is bounded in $\widetilde H^1(0,+\infty)$, $u_0^\eps\to u_0$ strongly in $H^1(0,\ell_0)$, $\eps u_1^\eps\to0$ strongly in $L^2(0,\ell_0)$ as $\eps\to 0^+$.
	\item[(H3)] $\weps\rightharpoonup w$ weakly in $\widetilde H^1(0,+\infty)$, $u_0^\eps\to u_0$ strongly in $H^1(0,\ell_0)$, $\eps u_1^\eps\to0$ strongly in $L^2(0,\ell_0)$ as $\eps\to 0^+$.
\end{itemize}
\begin{rmk}
	Assuming (H3), by the compact embedding of $H^1(0,T)$ in $C^0([0,T])$ we deduce that for every $T>0$ we have $\weps\to w$ uniformly in $[0,T]$ as $\eps\to 0^+$.
\end{rmk}
\begin{rmk}
	As explained in Section \ref{sec1} the pair $(\tilde{u},\tilde{\ell})$ solution of \eqref{problemtilde}\&\eqref{Griffithtilde} fulfills the energy-dissipation balance:
	\begin{equation}\label{ebtilde}
		\mc E(t)+\mc A(t)+\int_{\ell_0}^{\tilde{\ell}(t)}\kappa(\sigma)\d\sigma=\frac 12\int_{0}^{\ell_0}\dot{u}_0(\sigma)^2\d\sigma,\quad\text{for every }t\in[0,+\infty),
	\end{equation}
	where $\mc E$ and $\mc A$ are as in \eqref{kinpot} and \eqref{frict} with $\eps=1$ and $\tilde{u}$, $\tilde{\ell}$ in place of $\ueps$ and $\elleps$.
\end{rmk}
We want to notice that, assuming (H2) and considering the subsequence $\epsn$ given by Remark~\ref{unifwrmk}, one can apply Theorem~\ref{contdependence} deducing that actually the pair $(\tilde{u},\tilde{\ell})$ is the limit as $n\to +\infty$ (in the sense of Theorem~\ref{contdependence}) of $(u_\epsn,\ell_\epsn)$, where this last pair is the dynamic evolution related to the unrescaled problem \eqref{problem0} (replacing $w$, $u_0$, $u_1$ by $w^\epsn$, $u_0^\epsn$, $u_1^\epsn$) coupled with dynamic Griffith's criterion.\par 
We denote by $\ell_1$ the limit of $\tilde{\ell}(t)$ when $t$ goes to $+\infty$. Before studying the relationship between $\ell_1$ and $\ell^+(0)$ we perform an asymptotic analysis of the pair $(\tilde{u},\tilde{\ell})$ as $t\to +\infty$.

\begin{lemma}\label{delta}
	Assume (K0). Then for every $\delta>0$ there exists a time $T_\delta>0$ and a measurable set $N_\delta\subseteq(T_\delta,+\infty)$ such that $|N_\delta|\le\delta$ and $\dot{\tilde{\ell}}(t)\le\delta$ for every $t\in(T_\delta,+\infty)\diff N_\delta$.
\end{lemma}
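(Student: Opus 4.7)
The plan is to reduce the lemma to a Chebyshev-type argument once one knows that $\dot{\tilde{\ell}}$ is integrable on the whole half-line, which in turn will be extracted from the energy-dissipation balance \eqref{ebtilde} and from hypothesis (K0).

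First I would establish that $\tilde{\ell}$ is \emph{globally bounded} on $[0,+\infty)$. The balance \eqref{ebtilde}, together with the non-negativity of $\mc E(t)$ and $\mc A(t)$, yields
\[
\int_{\ell_0}^{\tilde{\ell}(t)}\kappa(\sigma)\d\sigma\le\frac12\int_0^{\ell_0}\dot u_0(\sigma)^2\d\sigma=:M,\quad\text{for every }t\in[0,+\infty).
\]
By (K0) the primitive $x\mapsto\int_{\ell_0}^{x}\kappa(\sigma)\d\sigma$ diverges to $+\infty$ as $x\to+\infty$, so $\tilde{\ell}$ cannot blow up; being nondecreasing it has a finite limit $\ell_1:=\lim_{t\to+\infty}\tilde{\ell}(t)<+\infty$.

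Next I would observe that $\dot{\tilde{\ell}}\ge 0$ a.e.\ (irreversibility) and, by absolute continuity of $\tilde{\ell}$,
\[
\int_0^{+\infty}\dot{\tilde{\ell}}(\tau)\d\tau=\ell_1-\ell_0<+\infty,
\]
so $\dot{\tilde{\ell}}\in L^1(0,+\infty)$. In particular the tail integral $\int_T^{+\infty}\dot{\tilde{\ell}}(\tau)\d\tau$ goes to $0$ as $T\to+\infty$.

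Finally, given $\delta>0$, I would choose $T_\delta>0$ so large that $\int_{T_\delta}^{+\infty}\dot{\tilde{\ell}}(\tau)\d\tau\le \delta^{2}$, and set
\[
N_\delta:=\{t\in(T_\delta,+\infty)\mid \dot{\tilde{\ell}}(t)>\delta\}.
\]
Then by Chebyshev's inequality
\[
|N_\delta|\le\frac1\delta\int_{N_\delta}\dot{\tilde{\ell}}(\tau)\d\tau\le\frac1\delta\int_{T_\delta}^{+\infty}\dot{\tilde{\ell}}(\tau)\d\tau\le\delta,
\]
while by the very definition of $N_\delta$ one has $\dot{\tilde{\ell}}(t)\le\delta$ on $(T_\delta,+\infty)\diff N_\delta$, and the lemma is proved. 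There is no real obstacle here: the only non-trivial ingredient is the global boundedness of $\tilde{\ell}$, which is the place where assumption (K0) is used in an essential way.
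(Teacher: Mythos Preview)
Your proof is correct and follows essentially the same route as the paper's: both deduce from \eqref{ebtilde} and (K0) that $\tilde{\ell}$ has a finite limit $\ell_1$, pick $T_\delta$ so that the tail $\int_{T_\delta}^{+\infty}\dot{\tilde{\ell}}\le\delta^2$, define $N_\delta$ as the super-level set $\{\dot{\tilde{\ell}}>\delta\}$ beyond $T_\delta$, and bound its measure by Chebyshev. The only cosmetic difference is that the paper explicitly throws the (null) set of non-differentiability points of $\tilde{\ell}$ into $N_\delta$ so that the conclusion holds literally ``for every $t$'' in the complement; you may want to do the same for completeness.
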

\begin{proof}
	First of all we notice that by (K0) we deduce from the energy-dissipation balance \eqref{ebtilde} that $\ell_1$ is finite. Then we fix $\delta>0$ and we consider $T_\delta>0$ in such a way that $\ell_1-\tilde{\ell}(T_\delta)\le\delta^2$. Introducing the sets:
	\begin{align*}
		&ND_\delta:=\{t>T_\delta\mid \tilde{\ell} \text{ is not differentiable at }t\},\\
		& M_\delta:=\{t>T_\delta\mid \tilde{\ell}\text{ is differentiable at }t\text{ and }\dot{\tilde{\ell}}(t)>\delta \},
	\end{align*}
	we then define $N_\delta:=ND_\delta\cup M_\delta$. By construction $\dot{\tilde{\ell}}(t)\le\delta$ for every $t\in(T_\delta,+\infty)\diff N_\delta$, while by means of Čebyšëv inequality we deduce:
	\begin{align*}
		|N_\delta|=|M_\delta|\le\frac 1\delta \int_{T_\delta}^{+\infty}\dot{\tilde{\ell}}(\tau)\d\tau=\frac{\ell_1-\tilde{\ell}(T_\delta)}{\delta}\le\delta,
	\end{align*}
	and we conclude.
\end{proof}
All the next Propositions trace what we have done in the previous Sections to deal with the analysis of the limit of the pair $(\ueps,\elleps)$ when $\eps\to 0^+$. For this reason the proofs are only sketched.
\begin{prop}\label{limit}
	Assume (K0). Then one has $\displaystyle\lim\limits_{t\to +\infty}{\mc E}(t)=\frac 12\frac{w(0)^2}{\ell_1}$.
\end{prop}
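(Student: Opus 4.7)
The strategy is to adapt the Lyapunov estimate of Theorem~\ref{mainestimate} to the unrescaled problem \eqref{problemtilde}\&\eqref{Griffithtilde} (i.e.\ the case $\eps=1$) and exploit the fact that $w\equiv w(0)$ is constant in time. Define the modified energy
\begin{equation*}
\widetilde{\mc E}(t):=\frac 12\int_{0}^{\tilde\ell(t)}\tilde u_t(t,\sigma)^2\d\sigma+\frac 12\int_{0}^{\tilde\ell(t)}\big(\tilde u_x(t,\sigma)-r_x(t,\sigma)\big)^2\d\sigma,
\end{equation*}
with $r(t,x)=w(0)(1-x/\tilde\ell(t))\chi_{[0,\tilde\ell(t)]}(x)$, so that analogously to \eqref{squares} one has $\widetilde{\mc E}(t)=\mc E(t)-\frac 12 w(0)^2/\tilde\ell(t)$. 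Since $\tilde\ell(t)\to\ell_1$, the claim reduces to proving $\widetilde{\mc E}(t)\to 0$ as $t\to+\infty$.

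First I repeat the differential inequalities \eqref{este} and \eqref{estf} with $\eps=1$ and with $\weps$ replaced by the constant $w(0)$. Because $\dot w\equiv 0$, all the terms involving $\dot w(\tau)^2$, $\tilde u_x(\tau,0)^2$ (which came from Young's inequality on $\dot w\,\tilde u_x(\tau,0)$ in the proof of Proposition~\ref{energyboundprop} and of \eqref{este}) and the extra constant $1$ in \eqref{este} all disappear; moreover the term $\frac{\dot{\tilde\ell}}{2}\frac{w(0)^2}{\tilde\ell^2}$ is controlled by $C\dot{\tilde\ell}(t)$ since $\tilde\ell\ge\ell_0$, and in \eqref{estf} the term $\int r_t^2$ reduces to $C\dot{\tilde\ell}(t)^2$ by direct computation from \eqref{reps}. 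Introducing the Lyapunov function $\widetilde{\mc D}(t):=\widetilde{\mc E}(t)+2m\widetilde{\mc F}(t)$ (with $\widetilde{\mc F}$ defined as in Section~\ref{sec3} and $m$ chosen as in Theorem~\ref{mainestimate}, now uniform in time thanks to $\tilde\ell\le\ell_1$ globally) and using $\dot{\tilde\ell}<1$ to bound $\dot{\tilde\ell}^2\le\dot{\tilde\ell}$, Gr\"onwall's lemma yields
\begin{equation*}
\widetilde{\mc E}(t)\le C e^{-mt}+C\int_{0}^{t}\dot{\tilde\ell}(\tau)e^{-m(t-\tau)}\d\tau, \quad\text{for every }t\in[0,+\infty).
\end{equation*}

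The main (though still mild) obstacle is to show that the convolution term vanishes as $t\to+\infty$; this is precisely where Lemma~\ref{delta} is used. Fix $\delta>0$ and let $T_\delta$, $N_\delta$ be as in the lemma. For $t>T_\delta$ split the integral into $(0,T_\delta)$, $(T_\delta,t)\setminus N_\delta$ and $N_\delta\cap(T_\delta,t)$: the first piece is bounded by $(\ell_1-\ell_0)e^{-m(t-T_\delta)}\to 0$; on the second piece $\dot{\tilde\ell}(\tau)\le\delta$, so it is bounded by $\delta/m$; on the third piece one uses $\dot{\tilde\ell}<1$ and $|N_\delta|\le\delta$ to bound it by $\delta$. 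Hence
\begin{equation*}
\limsup_{t\to+\infty}\int_{0}^{t}\dot{\tilde\ell}(\tau)e^{-m(t-\tau)}\d\tau\le\frac{\delta}{m}+\delta,
\end{equation*}
and since $\delta$ is arbitrary the limit is $0$. Therefore $\widetilde{\mc E}(t)\to 0$, which by the identity $\widetilde{\mc E}(t)=\mc E(t)-\frac 12 w(0)^2/\tilde\ell(t)$ and $\tilde\ell(t)\to\ell_1$ gives the desired conclusion.
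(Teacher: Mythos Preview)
Your proof is correct and follows essentially the same approach as the paper: define the modified energy $\widetilde{\mc E}$, repeat the Lyapunov argument of Theorem~\ref{mainestimate} with $\eps=1$ and constant loading $w(0)$ (noting that the terms $\dot w^2$, $\tilde u_x(\cdot,0)^2$ and the additive constant drop out, and that $m$ can be taken uniform in $t$ since $\tilde\ell\le\ell_1$), obtain the exponential estimate with convolution against $\dot{\tilde\ell}$, and then use Lemma~\ref{delta} to show the convolution term vanishes. The only cosmetic difference is in the bookkeeping of the three-way split of the convolution integral, which is equivalent to the paper's.
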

\begin{proof}
	As in Section \ref{sec3} we introduce the modified energy:
	\begin{equation*}
		\widetilde{\mc E}(t):=\frac 12 \int_{0}^{\tilde{\ell}(t)}\tilde{u}_t(t,\sigma)^2\d\sigma+\frac 12\int_{0}^{\tilde\ell(t)}\big(\tilde{u}_x(t,\sigma)-\tilde{r}_x(t,\sigma)\big)^2\d\sigma,\quad \text{for }t\in[0,+\infty),
		\end{equation*}
		where  
		\begin{equation*}
		\tilde{r}(t,x):=w(0)\left(1-\frac{x}{\tilde\ell(t)}\right)\chi_{[0,\tilde\ell(t)]}(x),\quad \text{for }(t,x)\in[0,+\infty)\times[0,+\infty).
		\end{equation*}
		Repeating the proof of Theorem~\ref{mainestimate} we deduce that the following estimate holds true:
		\begin{equation}\label{estimatetilde}
			\widetilde{\mc E}(t)\le 4 \widetilde{\mc E}(0)e^{-mt}+Ce^{-mt}\int_{0}^{t}\dot{\tilde{\ell}}(\tau)e^{m\tau}\d\tau,\quad\text{for every }t\in[0,+\infty),
		\end{equation}
		where $m$ is a suitable positive value and $C$ is a positive constant independent of $t$. By means of Lemma~\ref{delta} now we show that the second term in \eqref{estimatetilde} goes to $0$ when $t\to +\infty$. Indeed let us fix $\delta>0$ and consider $T_\delta$, $N_\delta$ as in Lemma~\ref{delta}; then for every $t\ge T_\delta$ we can estimate:
		\begin{align*}
			e^{-mt}\int_{0}^{t}\dot{\tilde{\ell}}(\tau)e^{m\tau}\d\tau&=e^{-mt}\left(\int_{0}^{T_\delta}\dot{\tilde{\ell}}(\tau)e^{m\tau}\d\tau+\int\limits_{(T_\delta,t)\cap N_\delta}\dot{\tilde{\ell}}(\tau)e^{m\tau}\d\tau+\int\limits_{(T_\delta,t)\setminus N_\delta}\dot{\tilde{\ell}}(\tau)e^{m\tau}\d\tau\right)\\
			&\le e^{-mt}\left(\int_{0}^{T_\delta}\dot{\tilde{\ell}}(\tau)e^{m\tau}\d\tau+e^{mt}|N_\delta|+\delta\int_{T_\delta}^{t}e^{m\tau}\d\tau\right)\\
			&\le e^{-mt}\int_{0}^{T_\delta}\dot{\tilde{\ell}}(\tau)e^{m\tau}\d\tau+\delta\left(1+\frac 1m\right).
		\end{align*}
		Letting first $t\to+\infty$ and then $\delta\to0^+$ we hence deduce that $\displaystyle\lim\limits_{t\to +\infty}e^{-mt}\int_{0}^{t}\dot{\tilde{\ell}}(\tau)e^{m\tau}\d\tau=0$ and so we get $\lim\limits_{t\to +\infty}\widetilde{\mc E}(t)=0$. Now we conclude since like in \eqref{squares} we have:
		\begin{equation*}
			\mc E(t)=\widetilde{\mc E}(t)+\frac 12 \frac{w(0)^2}{\tilde\ell(t)},\quad\text{for every }t\in[0,+\infty).
		\end{equation*}
\end{proof}
\begin{lemma}\label{boundarytilde}
	Assume (K0). Then the following limit holds true:
	\begin{equation*}
		\lim\limits_{t\to +\infty}\frac 1t\int_{0}^{t}\left(\tilde{u}_x(\sigma,0)+\frac{w(0)}{\tilde\ell(\tau)}\right)^2\d\tau=0.
	\end{equation*}
\end{lemma}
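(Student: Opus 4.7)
The plan is to set up an analog of the integration-by-parts identity \eqref{intparts2}/Corollary~\ref{bordo} for the unrescaled pair $(\tilde{u},\tilde{\ell})$. In this setting $\eps=1$, the loading $\tilde{w}\equiv w(0)$ is constant (so the $\dot{w}^\eps$-terms disappear), and the initial velocity vanishes. Fix a cutoff $h$ as in Lemma~\ref{intpartslemma}. Repeating the computation behind \eqref{intparts2} (first formally via the wave equation $\tilde u_{tt}-\tilde u_{xx}+\nu\tilde u_t=0$, and then rigorously via the density argument of that lemma, based on Remark~\ref{regularity} and Theorem~\ref{contdependence}) one obtains
\begin{align*}
\frac 12\int_0^t\Bigl(\tilde{u}_x(\tau,0)+\tfrac{w(0)}{\tilde{\ell}(\tau)}\Bigr)^2\d\tau
&=-\frac 12\int_0^t\!\!\int_0^{\ell_0}\dot h(\sigma)\Bigl(\tilde u_t^2+(\tilde u_x-\tilde r_x)^2\Bigr)\d\sigma\d\tau\\
&\quad-\nu\int_0^t\!\!\int_0^{\ell_0}h(\sigma)\tilde u_t(\tilde u_x-\tilde r_x)\d\sigma\d\tau\\
&\quad-\int_0^{\ell_0}h(\sigma)\tilde u_t(t,\sigma)\bigl(\tilde u_x(t,\sigma)-\tilde r_x(t,\sigma)\bigr)\d\sigma\\
&\quad-w(0)\int_0^t\!\!\int_0^{\ell_0}h(\sigma)\tilde u_t(\tau,\sigma)\,\frac{\dot{\tilde{\ell}}(\tau)}{\tilde{\ell}(\tau)^2}\d\sigma\d\tau.
\end{align*}

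The proof then follows by showing that each term on the right-hand side is $o(t)$ as $t\to+\infty$, so that dividing by $t$ yields the claimed vanishing Cesàro average. The first line is dominated by $C\int_0^t\widetilde{\mc E}(\tau)\d\tau$, which is $o(t)$ by Proposition~\ref{limit} and the elementary fact that a function vanishing at infinity has vanishing Cesàro average. The second line is controlled via Young's inequality by $C\bigl(\int_0^t\int_0^{\tilde{\ell}(\tau)}\tilde u_t^2\d\sigma\d\tau+\int_0^t\widetilde{\mc E}(\tau)\d\tau\bigr)$: the first summand is $O(1)$ thanks to the energy-dissipation balance \eqref{ebtilde}, and the second is $o(t)$ as above. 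The third line is bounded by $\|\tilde u_t(t,\cdot)\|_{L^2}\|(\tilde u_x-\tilde r_x)(t,\cdot)\|_{L^2}\le 2\widetilde{\mc E}(t)$, hence $o(1)$ by Proposition~\ref{limit}. Finally, for the $\dot{\tilde{\ell}}$-term, Cauchy--Schwarz gives a bound by $(w(0)/\ell_0^2)\|\dot{\tilde{\ell}}\|_{L^2(0,+\infty)}\bigl(\int_0^t\int_0^{\ell_0}\tilde u_t^2\bigr)^{1/2}$; both factors are finite, since $\dot{\tilde{\ell}}\in L^\infty\cap L^1(0,+\infty)\subset L^2(0,+\infty)$ (the $L^1$ bound being equivalent to $\ell_1=\lim\tilde{\ell}<+\infty$, guaranteed by \eqref{ebtilde} and (K0)) and the space-time integral of $\tilde u_t^2$ is bounded by \eqref{ebtilde}.

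The main obstacle is the rigorous derivation of the integration-by-parts identity above. As in Lemma~\ref{intpartslemma}, the formal computation requires $\tilde{u}\in\widetilde H^2$, which is not granted in general; making it rigorous demands approximating $u_0$ by $H^2$-data satisfying the first-order compatibility conditions of Remark~\ref{regularity}, applying the identity to the regularised solutions, and passing to the limit via Theorem~\ref{contdependence}, while carefully tracking the extra terms stemming from subtracting $\tilde{r}$, which are responsible for the fourth line in the identity above.
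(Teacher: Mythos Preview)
Your proposal is correct and follows essentially the same route as the paper: adapt the integration-by-parts identity \eqref{intparts2} to the unrescaled setting (with $\eps=1$, constant loading, zero initial velocity), then observe that every term on the right-hand side is $o(t)$ thanks to Proposition~\ref{limit} and the global bounds from \eqref{ebtilde} and (K0). The paper states this more tersely, recording only the final estimate $\int_0^t(\tilde u_x(\tau,0)+w(0)/\tilde\ell(\tau))^2\d\tau\le C\bigl(\int_0^t\widetilde{\mc E}(\tau)\d\tau+\mc E(0)+\ell_1\bigr)$ and then invoking de l'H\^opital, but the underlying computation and the key ingredient ($\widetilde{\mc E}(t)\to 0$) are the same as yours.
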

\begin{proof}
	The proof is analogous to the one of Corollary~\ref{bordo}. By using \eqref{intparts2} with the obvious changes, for every $t>0$ we obtain the estimate:
	\begin{equation*}
		\int_{0}^{t}\left(\tilde{u}_x(\sigma,0)+\frac{w(0)}{\tilde\ell(\tau)}\right)^2\d\tau\le C\left(\int_{0}^{t}\widetilde{\mc E}(\tau)\d\tau+\mc E(t)+\tilde\ell(t)\right)\le C\left(\int_{0}^{t}\widetilde{\mc E}(\tau)\d\tau+\mc E(0)+\ell_1\right).
	\end{equation*}
	From this we conclude by applying de l'Hôpital's rule since in Proposition~\ref{limit} we proved that $\lim\limits_{t\to +\infty}\widetilde{\mc E}(t)=0$.
\end{proof}
\begin{prop}\label{lessder}
	Assume (K0). Then $\ell_1$ satisfies the stability condition at time $t=0$, namely:
	\begin{equation*}
		\frac 12\frac{w(0)^2}{\ell_1^2}\le\kappa(\ell_1).
	\end{equation*} 
\end{prop}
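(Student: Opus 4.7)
The plan is to integrate the dynamic stability condition $G_{\dot{\tilde{\ell}}(t)}(t)\le\kappa(\tilde{\ell}(t))$ (the second line of \eqref{Griffithtilde}) over a long window $[T,2T]$, divide by $T$, and pass to the limit $T\to+\infty$. On the right-hand side, since $\tilde{\ell}(t)\to\ell_1$ and $\kappa$ is continuous,
\[
\tfrac{1}{T}\int_T^{2T}\kappa(\tilde{\ell}(\tau))\d\tau\longrightarrow\kappa(\ell_1).
\]
Hence the whole argument reduces to showing that the averaged dynamic energy release rate satisfies $\liminf_{T\to+\infty}\tfrac{1}{T}\int_T^{2T}G_{\dot{\tilde{\ell}}(\tau)}(\tau)\d\tau\ge\tfrac12 w(0)^2/\ell_1^2$.

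To handle the left-hand side, I would invoke the representation \eqref{expgrif}--\eqref{F} adapted to the unrescaled setting (where $\dot{w}\equiv 0$ and $\tilde{u}_t(0,\cdot)\equiv 0$), which gives, for a.e. $\tau>0$,
\[
G_{\dot{\tilde{\ell}}(\tau)}(\tau)=2\,\frac{1-\dot{\tilde{\ell}}(\tau)}{1+\dot{\tilde{\ell}}(\tau)}\,F(\varphi(\tau))^2,\qquad F(\sigma)=-\tfrac12\tilde{u}_x(\sigma,0)-\tfrac{\nu}{2}V(\sigma),
\]
where $V(\sigma):=\int_\sigma^{\varphi^{-1}(\sigma)}\tilde{u}_t(r,r-\sigma)\d r$ and $\varphi(\tau)=\tau-\tilde{\ell}(\tau)$. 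Expanding the square, one has $2F(\sigma)^2=\tfrac12\tilde{u}_x(\sigma,0)^2+\nu\tilde{u}_x(\sigma,0)V(\sigma)+\tfrac{\nu^2}{2}V(\sigma)^2$.

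The three ingredients I would put together are: (a) by Lemma \ref{delta}, $\dot{\tilde{\ell}}(\tau)\to 0$ outside sets of arbitrarily small measure for large $\tau$, so the prefactor $(1-\dot{\tilde{\ell}})/(1+\dot{\tilde{\ell}})$ averages to $1$; (b) by Lemma \ref{boundarytilde} combined with $\tilde{\ell}(\tau)\to\ell_1$, the averaged $L^2$-limit of $\tilde{u}_x(\cdot,0)$ is $-w(0)/\ell_1$, hence $\tfrac{1}{T}\int_T^{2T}\tfrac12\tilde{u}_x(\varphi(\tau),0)^2\d\tau\to\tfrac12 w(0)^2/\ell_1^2$ (after the change of variables $\sigma=\varphi(\tau)$, noting $\varphi(T)/T\to 1$ since $\tilde{\ell}$ is bounded); (c) the viscous term satisfies $\int_0^{+\infty}V(\sigma)^2\d\sigma<+\infty$, which is obtained via Cauchy--Schwarz and the Fubini swap already used in \eqref{estfriction}:
\[
\int_0^{+\infty}V(\sigma)^2\d\sigma\le C\int_0^{+\infty}\int_0^{\tilde{\ell}(r)}\tilde{u}_t(r,x)^2\d x\d r=\frac{C}{\nu}\mc A(+\infty)<+\infty,
\]
where $\mc A(+\infty)$ is finite by the energy-dissipation balance \eqref{ebtilde} together with Proposition~\ref{limit}. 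Consequently both the $V^2$ term and the cross term (via Cauchy--Schwarz) have vanishing average on $[T,2T]$ as $T\to+\infty$.

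The main obstacle I anticipate is the careful bookkeeping of the change of variables $\sigma=\varphi(\tau)$: one must verify that averaging $\tilde{u}_x(\varphi(\tau),0)^2$ in $\tau$ is equivalent to averaging $\tilde{u}_x(\sigma,0)^2$ in $\sigma$ in the limit, which uses $\dot\varphi=1-\dot{\tilde{\ell}}\to 1$ in average, and that the composition $V(\varphi(\tau))$ is the right quantity to control via the Fubini reorganization. Once these asymptotic identifications are justified, assembling the three pieces yields $\liminf_{T\to+\infty}\tfrac{1}{T}\int_T^{2T}G_{\dot{\tilde{\ell}}(\tau)}(\tau)\d\tau\ge\tfrac12 w(0)^2/\ell_1^2$, and comparison with the averaged Griffith bound concludes the proof.
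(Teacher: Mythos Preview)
Your proposal is correct and follows essentially the same route as the paper: both average the stability condition in Griffith's criterion, identify the averaged energy release rate via the representation \eqref{dynenrate}--\eqref{F}, control the boundary term through Lemma~\ref{boundarytilde}, and kill the viscous contribution by the Fubini/Cauchy--Schwarz swap of \eqref{estfriction}. The only cosmetic differences are that the paper averages over $[0,t]$ (using de l'H\^opital on the right-hand side) rather than $[T,2T]$, and performs the change of variables $\sigma=\varphi(\tau)$ \emph{before} discussing the prefactor, which turns $(1-\dot{\tilde{\ell}})/(1+\dot{\tilde{\ell}})$ into the bounded weight $1/\dot{\tilde{\psi}}$ and makes your point~(a) automatic without invoking Lemma~\ref{delta}.
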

\begin{proof}
	The idea is to pass to the limit as $t\to +\infty$ in the stability condition in Griffith's criterion \eqref{Griffithtilde}, as we did in Proposition~\ref{stabprop}. Since here we want to compute a limit when $t$ grows to $+\infty$, as in Lemma~\ref{boundarytilde} we need to average the stability condition, getting:
	\begin{equation}\label{average}
		\frac 1t\int_{0}^{t}\kappa(\tilde{\ell}(\sigma))\d\sigma\ge\frac 1t\int_{0}^{t}{G}_{\dot{\tilde{\ell}}(\sigma)}(\sigma)\d\sigma,\quad\quad\text{ for every }t\in(0,+\infty).
	\end{equation}
	By de l'Hôpital's rule the left-hand side in \eqref{average} converges to $\kappa(\ell_1)$ as $t\to +\infty$, while to deal with the right-hand side we argue as in the proof of Proposition~\ref{stabprop}. For the sake of simplicity we introduce the time $t^*>0$ which satisfies $t^*=\tilde{\ell}(t^*)$, so that for every $t\ge t^*$ we can write:
	\begin{equation}\label{booh}
	\begin{aligned}
		\frac 1t\int_{0}^{t}{G}_{\dot{\tilde{\ell}}(\sigma)}(\sigma)\d\sigma&\ge\frac 1t\int_{t^*}^{t}{G}_{\dot{\tilde{\ell}}(\sigma)}(\sigma)\d\sigma\\
		&=\frac 1t\int_{0}^{\tilde\varphi(t)}\frac{1}{\dot{\tilde{\psi}}(\tilde{\varphi}^{-1}(\sigma))}\frac 12 \left(\tilde{u}_x(\sigma,0)+\nu\int_{\sigma}^{\tilde{\varphi}^{-1}(\sigma)}\tilde{u}_t(\tau,\tau-\sigma)\d\tau\right)^2\d\sigma,
	\end{aligned}
	\end{equation}
	where we used the explicit formula for ${G}_{\dot{\tilde{\ell}}(\sigma)}(\sigma)$ given by \eqref{dynenrate} and \eqref{F}, with the obvious changes. By means of Lemma~\ref{boundarytilde} and since $\lim\limits_{t\to +\infty}\frac{\tilde{\varphi}(t)}{t}=\lim\limits_{t\to +\infty}\frac{t-\tilde\ell(t)}{t}=1$ it is easy to infer:
	\begin{equation}\label{uno}
		\lim\limits_{t\to +\infty}\frac 1t\int_{0}^{\tilde\varphi(t)}\frac{1}{\dot{\tilde{\psi}}(\tilde{\varphi}^{-1}(\sigma))}\frac 12\tilde{u}_x(\sigma,0)^2\d\sigma=\frac 12 \frac{w(0)^2}{\ell_1^2}.
	\end{equation}
	Moreover, by using estimate \eqref{estfriction} in the proof of Proposition~\ref{stabprop} and recalling that the dissipated energy $\mc A$ is bounded by \eqref{ebtilde}, we deduce:
	\begin{equation}\label{due}
		\lim\limits_{t\to +\infty}\frac 1t\int_{0}^{\tilde\varphi(t)}\frac{1}{\dot{\tilde{\psi}}(\tilde{\varphi}^{-1}(\sigma))}\left(\int_{\sigma}^{\tilde{\varphi}^{-1}(\sigma)}\tilde{u}_t(\tau,\tau-\sigma)\d\tau\right)^2\d\sigma=0.
	\end{equation}
	From \eqref{uno} and \eqref{due} we can pass to the limit in \eqref{booh} deducing that:
	\begin{equation*}
		\liminf\limits_{t\to +\infty}\frac 1t\int_{0}^{t}{G}_{\dot{\tilde{\ell}}(\sigma)}(\sigma)\d\sigma\ge \frac 12 \frac{w(0)^2}{\ell_1^2},
	\end{equation*}
	and so we conclude.
\end{proof}\noindent
We are now in a position to compare the value of $\ell^+(0)$ with $\ell_1$.
\begin{lemma}\label{less}
	Assume (H2) and (K0). Then $\ell_1\le\ell^+(0)$.
\end{lemma}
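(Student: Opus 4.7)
My plan is to compare the rescaled and unrescaled dynamic evolutions through the identity $\ell^\eps(t)=\ell_\eps(t/\eps)$, observing that as $\eps\to 0^+$ with $t>0$ fixed the argument $t/\eps$ diverges, so one expects $\ell^\eps(t)$ to behave like $\ell_1=\lim_{s\to+\infty}\tilde\ell(s)$. I will invoke the remark preceding Lemma~\ref{delta}: under (H2) (and Remark~\ref{unifwrmk}), Theorem~\ref{contdependence} applied with $\eps=1$ to the data $w^\epsn(\epsn\cdot)$, $u_0^\epsn$, $\epsn u_1^\epsn$ gives $\ell_{\epsn}\to\tilde\ell$ in $W^{1,1}(0,S)$, hence uniformly on $[0,S]$, for every $S>0$.

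The core of the argument is then a simple monotonicity trick. I fix $t>0$ and $s>0$: for $n$ large enough $t/\epsn > s$, and since $\ell_{\epsn}$ is nondecreasing by the irreversibility condition,
\begin{equation*}
\ell^\epsn(t)=\ell_{\epsn}(t/\epsn)\ge \ell_{\epsn}(s).
\end{equation*}
Letting $n\to+\infty$, the left-hand side converges to $\ell(t)$ by Proposition~\ref{ellconv}, while the right-hand side converges to $\tilde\ell(s)$ by the uniform convergence on $[0,s]$. Thus $\ell(t)\ge\tilde\ell(s)$ for every $s>0$, and taking $s\to+\infty$ yields $\ell(t)\ge\ell_1$ for every $t>0$. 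Since $\ell$ is nondecreasing one has $\ell^+(0)=\inf_{t>0}\ell(t)\ge\ell_1$, which is the desired inequality.

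The only delicate point I anticipate is the rigorous justification that $w^\epsn(\epsn\cdot)$ converges to the constant $w(0)$ strongly in $\widetilde H^1(0,+\infty)$, as needed to apply Theorem~\ref{contdependence}. This should follow from (H2) by a short computation: the rescaled derivative $\epsn\dot w^\epsn(\epsn\cdot)$ has squared $L^2(0,T)$-norm equal to $\epsn\int_0^{\epsn T}\dot w^\epsn(s)^2\d s$, which vanishes thanks to the uniform boundedness of $\{\dot w^\epsn\}$ in $L^2$; while the $L^2$-convergence $w^\epsn(\epsn\cdot)\to w(0)$ follows from the uniform convergence of $w^\epsn$ to $w$ on compact sets together with the continuity of $w$ at $0$. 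Once this step is verified, the proof reduces to the two-line monotonicity argument above.
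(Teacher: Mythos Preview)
Your proof is correct and follows essentially the same approach as the paper: both arguments use the rescaling identity $\ell^{\epsn}(t)=\ell_{\epsn}(t/\epsn)$, the monotonicity of $\ell_{\epsn}$ to bound $\ell_{\epsn}(t/\epsn)\ge\ell_{\epsn}(s)$ for $n$ large, and Theorem~\ref{contdependence} to pass to the limit $\ell_{\epsn}(s)\to\tilde\ell(s)$. Your additional discussion justifying that $w^{\epsn}(\epsn\cdot)\to w(0)$ in $\widetilde H^1(0,+\infty)$ makes explicit a verification that the paper leaves implicit in the remark preceding Lemma~\ref{delta}.
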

\begin{proof}
	We fix $t>0$ and we consider the subsequence $\epsn\searrow 0$ given by Remark~\ref{unifwrmk} and Proposition~\ref{ellconv}. Then one has:
	\begin{equation*}
	\ell(t)=\lim\limits_{n\to +\infty}\ellepsn(t)=\lim\limits_{n\to +\infty}\ell_\epsn\left(\frac{t}{\epsn}\right).
	\end{equation*}
	Now we fix $T>0$ and by monotonicity we deduce $\ell_\epsn\left(\frac{t}{\epsn}\right)\ge\ell_\epsn\left(T\right)$ for $n$ large enough. Thus, by means of Theorem~\ref{contdependence}, we get:
	\begin{equation*}
	\lim\limits_{n\to +\infty}\ell_\epsn\left(\frac{t}{\epsn}\right)\ge \lim\limits_{n\to +\infty}\ell_\epsn\left(T\right)=\tilde{\ell}(T).
	\end{equation*}
	Hence $\ell(t)\ge\tilde{\ell}(T)$ and by the arbitrariness of $t>0$ and $T>0$ we conclude.
\end{proof}
\begin{prop}\label{energyless}
	Assume (H2), (K0) and (K3). Then the following inequality holds true:
	\begin{equation*}
		\frac 12\frac{w(0)^2}{\ell^+(0)}+\int_{\ell_0}^{\ell^+(0)}\kappa(\sigma)\d\sigma\le\frac 12\frac{w(0)^2}{\ell_1}+\int_{\ell_0}^{\ell_1}\kappa(\sigma)\d\sigma.
	\end{equation*}
\end{prop}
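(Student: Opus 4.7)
The plan is to compare two energy-dissipation balances and reduce the proposition to a single inequality relating the atom of the limit measure $\mu$ at $t=0$ with the total friction energy dissipated by the unrescaled problem $(\tilde u,\tilde\ell)$ as $t\to+\infty$. Set $E(x):=\tfrac12\tfrac{w(0)^2}{x}+\int_{\ell_0}^{x}\kappa(\sigma)\,\mathrm{d}\sigma$. First, I evaluate Proposition~\ref{mu} in the limit $t\to 0^+$ (through continuity points of $\ell$, which is legitimate since $\ell$ is continuous on $(0,+\infty)$ by Corollary~\ref{contelle}): under hypothesis (H2) the right-hand side collapses to $\tfrac12\int_0^{\ell_0}\dot u_0(\sigma)^2\,\mathrm{d}\sigma$, so
\[
E(\ell^+(0))+\mu(\{0\})=\tfrac12\int_0^{\ell_0}\dot u_0(\sigma)^2\,\mathrm{d}\sigma.
\]
On the other side, since $\mc A$ is nondecreasing and bounded by \eqref{ebtilde}, the limit $\mc A(\infty):=\lim_{T\to+\infty}\mc A(T)$ exists; letting $T\to+\infty$ in \eqref{ebtilde} and using Proposition~\ref{limit} gives
\[
E(\ell_1)+\mc A(\infty)=\tfrac12\int_0^{\ell_0}\dot u_0(\sigma)^2\,\mathrm{d}\sigma.
\]
Subtracting shows that the desired inequality $E(\ell^+(0))\le E(\ell_1)$ is equivalent to $\mc A(\infty)\le\mu(\{0\})$.

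The main work is proving this last inequality. The key idea is to identify $\mu([0,t])$ as the limit of the rescaled friction dissipation $\mc A^\epsn(t)$ and then exploit monotonicity of $\mc A^\epsn$ to compare with the unrescaled $\mc A(T)$. Fix any $t>0$ in the continuity set of $\ell$. Passing to the limit in the rescaled energy-dissipation balance \eqref{eb} using Theorem~\ref{convu} (for $\mc E^\epsn(t)\to\tfrac12\tfrac{w(t)^2}{\ell(t)}$), Corollary~\ref{bordo} together with Remark~\ref{unifwrmk} (for $\mc W^\epsn(t)\to-\int_0^t\dot w(\tau)\tfrac{w(\tau)}{\ell(\tau)}\,\mathrm{d}\tau$), Proposition~\ref{ellconv} (for the toughness term), and (H2) (for $\mc E^\epsn(0)$), one obtains exactly the expression of $\mu([0,t])$ provided by Proposition~\ref{mu}. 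Hence $\mc A^\epsn(t)\to\mu([0,t])$ along continuity points.

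For the comparison with the unrescaled problem, a quick verification shows that the rescaled boundary $s\mapsto w^\epsn(\epsn s)$ converges to the constant $w(0)$ in $\widetilde H^1$ as $n\to+\infty$ (the $L^2$ part is uniform convergence, while the derivative has $L^2$-norm squared controlled by $\epsn\|\dot w^\epsn\|_{L^2(0,T)}^2$); combined with (H2) this allows Theorem~\ref{contdependence} to be applied to the unrescaled solutions $(u_\epsn,\ell_\epsn)$, whose limit is $(\tilde u,\tilde\ell)$. A time-rescaling of the friction integral yields $\mc A^\epsn(\epsn T)=\nu\int_0^T\!\!\int_0^{\ell_\epsn(s)}(u_\epsn)_t(s,\sigma)^2\,\mathrm{d}\sigma\,\mathrm{d}s\to\mc A(T)$. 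Fixing $T>0$ and $t>0$ as above, the monotonicity $\mc A^\epsn(\epsn T)\le\mc A^\epsn(t)$ (valid for $n$ large so that $\epsn T\le t$) passes to the limit, yielding $\mc A(T)\le\mu([0,t])$. Letting $T\to+\infty$ and then $t\to 0^+$ along continuity points gives $\mc A(\infty)\le\mu(\{0\})$, which completes the proof.

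The main obstacle is the identification $\lim_n\mc A^\epsn(t)=\mu([0,t])$, which requires pulling together every convergence established in Section~\ref{sec4} (Theorem~\ref{convu}, Corollary~\ref{bordo}, Proposition~\ref{ellconv}) inside the rescaled energy balance, together with the proper application of Theorem~\ref{contdependence} to the unrescaled sequence; once this bridge is built, the monotonicity-plus-limit argument is routine.
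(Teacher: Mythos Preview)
Your proof is correct and follows essentially the same route as the paper: both identify $\lim_n\mc A^{\epsn}(t)=\mu([0,t])$ by passing to the limit in the rescaled energy balance \eqref{eb}, compare with the unrescaled friction via the change of variable $\mc A^{\epsn}(t)=\nu\int_0^{t/\epsn}\int_0^{\ell_{\epsn}}(u_{\epsn})_t^2$, invoke Theorem~\ref{contdependence} on the unrescaled sequence, and close with Proposition~\ref{limit} and \eqref{ebtilde}. The only cosmetic differences are that the paper bounds $\mc A^{\epsn}(t)$ below by the unrescaled friction on $[0,t]$ and then lets $t\to+\infty$ (using Proposition~\ref{ebquas} to see the left-hand side is constant), whereas you work with $\mc A^{\epsn}(\epsn T)$, use monotonicity, and send $t\to 0^+$ at the end---avoiding Proposition~\ref{ebquas} but reaching the same inequality.
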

\begin{proof}
	By Proposition~\ref{mu}, Corollary~\ref{contelle} and the energy-dissipation balance \eqref{eb} we know that for every $t>0$ it holds:
	\begin{equation*}
		\lim\limits_{n\to +\infty}\mc A^\epsn(t)=\mu([0,t])=\frac 12 \int_{0}^{\ell_0}\dot{u}_0(\sigma)\d\sigma-\frac 12\frac{w(t)^2}{\ell(t)}-\int_{\ell_0}^{\ell(t)}\!\!\!\!\!\!\!\!\kappa(\sigma)\d\sigma+\int_{0}^{t}\!\!\!\dot{w}(\tau)\frac{w(\tau)}{\ell(\tau)}\d\tau,
	\end{equation*}
	where $\epsn$ is the subsequence given by \eqref{unifw} and by Propositions~\ref{ellconv} and \ref{energydecay}. By means of \eqref{equality} we hence deduce:
	\begin{equation}\label{one}
		\lim\limits_{n\to +\infty}\mc A^\epsn(t)=\frac 12 \int_{0}^{\ell_0}\dot{u}_0(\sigma)\d\sigma-	\frac 12\frac{w(0)^2}{\ell^+(0)}-\int_{\ell_0}^{\ell^+(0)}\kappa(\sigma)\d\sigma.
	\end{equation}
	By a simple change of variable we now notice that:
	\begin{equation*}
		\mc A^\epsn(t)=\nu \int_{0}^{t/\epsn}\int_{0}^{\ell_\epsn(\tau)}(u_\epsn)_t(\tau,\sigma)^2\d\sigma\d\tau\ge\nu\int_{0}^{t}\int_{0}^{\ell_\epsn(\tau)}(u_\epsn)_t(\tau,\sigma)^2\d\sigma\d\tau,
	\end{equation*}
	and so, by Theorem~\ref{contdependence}, we get:
	\begin{equation}\label{two}
		\lim\limits_{n\to +\infty}\mc A^\epsn(t)\ge\nu\int_{0}^{t}\int_{0}^{\tilde{\ell}(\tau)}\tilde{u}_t(\tau,\sigma)^2\d\sigma\d\tau.
	\end{equation}
	Putting together \eqref{one} and \eqref{two} we finally deduce:
	\begin{equation*}
		\frac 12 \int_{0}^{\ell_0}\dot{u}_0(\sigma)\d\sigma-	\frac 12\frac{w(0)^2}{\ell^+(0)}-\int_{\ell_0}^{\ell^+(0)}\kappa(\sigma)\d\sigma\ge \lim\limits_{t\to+\infty}\nu\int_{0}^{t}\int_{0}^{\tilde{\ell}(\tau)}\tilde{u}_t(\tau,\sigma)^2\d\sigma\d\tau=\lim\limits_{t\to +\infty}{\mc A}(t).
	\end{equation*}
	To conclude it is enough to recall that by energy-dissipation balance \eqref{ebtilde} we have:
	\begin{equation*}
		{\mc A}(t)=\frac 12 \int_{0}^{\ell_0}\dot{u}_0(\sigma)^2\d\sigma-{\mc E}(t)-\int_{\ell_0}^{\tilde{\ell}(t)}\kappa(\sigma)\d\sigma,\quad\text{ for every }t\in[0,+\infty),
	\end{equation*}
	and so by Proposition~\ref{limit} we obtain:
	\begin{equation*}
	\lim\limits_{t\to +\infty}{\mc A}(t)=\frac 12 \int_{0}^{\ell_0}\dot{u}_0(\sigma)^2\d\sigma-\frac 12 \frac{w(0)^2}{\ell_1}-\int_{\ell_0}^{\ell_1}\kappa(\sigma)\d\sigma.
	\end{equation*}
\end{proof}
\begin{cor}
	Assume (H2), (K0) and (K3). Then $\ell_1=\ell^+(0)$.
\end{cor}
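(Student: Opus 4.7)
The strategy is to combine the three preceding results: Lemma~\ref{less} gives the lower bound $\ell_1\le\ell^+(0)$, Proposition~\ref{energyless} gives an energetic upper bound on $\ell^+(0)$, and Proposition~\ref{lessder} gives the quasistatic stability inequality at $\ell_1$. The plan is to use monotonicity of the potential-plus-surface energy beyond $\ell_1$, which is forced by (K3), to promote $\ell_1\le\ell^+(0)$ into an equality.

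More precisely, I would introduce the function
\begin{equation*}
	E(x):=\frac 12\frac{w(0)^2}{x}+\int_{\ell_0}^{x}\kappa(\sigma)\d\sigma,\quad\text{for }x\ge\ell_0,
\end{equation*}
and observe that $E$ is absolutely continuous with derivative
\begin{equation*}
	\dot E(x)=\kappa(x)-\frac 12\frac{w(0)^2}{x^2}=\frac{\phi_\kappa(x)-\frac12 w(0)^2}{x^2}\quad\text{for a.e. }x\in(\ell_0,+\infty).
\end{equation*}
By Proposition~\ref{lessder} we have $\phi_\kappa(\ell_1)\ge\frac12 w(0)^2$, while (K3) ensures that $\phi_\kappa$ is strictly increasing on $[\ell_0,+\infty)$ with $\dot\phi_\kappa>0$ almost everywhere. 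Hence $\phi_\kappa(x)>\phi_\kappa(\ell_1)\ge\frac12 w(0)^2$ for every $x>\ell_1$, so $\dot E(x)>0$ for a.e. $x>\ell_1$, and consequently $E$ is strictly increasing on $[\ell_1,+\infty)$.

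Now suppose by contradiction that $\ell_1<\ell^+(0)$. Since by Lemma~\ref{less} we already know $\ell_1\le\ell^+(0)$, strict monotonicity of $E$ on $[\ell_1,+\infty)$ would give $E(\ell^+(0))>E(\ell_1)$, which directly contradicts the inequality
\begin{equation*}
	\frac 12\frac{w(0)^2}{\ell^+(0)}+\int_{\ell_0}^{\ell^+(0)}\kappa(\sigma)\d\sigma\le\frac 12\frac{w(0)^2}{\ell_1}+\int_{\ell_0}^{\ell_1}\kappa(\sigma)\d\sigma
\end{equation*}
provided by Proposition~\ref{energyless}. Therefore $\ell^+(0)=\ell_1$.

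No essential obstacle is expected here: the conceptual work has already been done in Propositions~\ref{lessder} and~\ref{energyless} and in Lemma~\ref{less}. The only care required is to exploit the full strength of (K3) (strict monotonicity of $\phi_\kappa$ with a.e. positive derivative), which is exactly what makes $E$ strictly increasing past the stability threshold and thus upgrades the two one-sided inequalities into equality.
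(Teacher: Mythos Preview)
Your proof is correct and follows essentially the same approach as the paper: introduce the energy $E_0(x)=\frac12\frac{w(0)^2}{x}+\int_{\ell_0}^{x}\kappa(\sigma)\d\sigma$, use Proposition~\ref{lessder} together with (K3) to deduce that $E_0$ is strictly increasing past $\ell_1$, and combine this with Lemma~\ref{less} and Proposition~\ref{energyless} to force $\ell^+(0)=\ell_1$. The only cosmetic difference is that you phrase the final step as a contradiction, while the paper states it directly.
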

\begin{proof}
	By Lemma~\ref{less} we already know that $\ell_1\le\ell^+(0)$. As in Proposition~\ref{equiv} we introduce the energy:
	\begin{equation*}
		E_0(x):=\frac 12 \frac{w(0)^2}{x}+\int_{\ell_0}^{x}\kappa(\sigma)\d\sigma,\quad\quad\text{for }x\in[\ell_0,+\infty).
	\end{equation*}
	By Proposition~\ref{energyless} we get $E_0(\ell^+(0))\le E_0(\ell_1)$, while by Proposition~\ref{lessder} and (K3) we deduce that $\dot{E}_0(x)>0$ for every $x>\ell_1$, namely $E_0$ is strictly increasing in $(\ell_1,+\infty)$. Thus we finally obtain $\ell_1=\ell^+(0)$.
\end{proof}\noindent
Putting together all the results obtained up to now we can finally deduce our main Theorem:
\begin{thm}\label{finalthm}
	Fix $\nu> 0$, $\ell_0>0$ and assume the functions $\weps$, $u_0^\eps$ and $u_1^\eps$ satisfy \eqref{bdryregularity} and \eqref{compatibility0} for every $\eps>0$. Let the positive toughness $\kappa$ belong to $\widetilde{C}^{0,1}([\ell_0,+\infty))$ and assume (H2), (K0) and (K3). Let $(\ueps,\elleps)$ be the pair of dynamic evolutions given by Theorem~\ref{exuniq}. Let $\epsn$ and $w$ be the subsequence and the function given by Remark~\ref{unifwrmk} and let $\ell_1$ be defined as $\ell_1:=\lim\limits_{t\to +\infty}\tilde{\ell}(t)$, with $(\tilde{u},\tilde{\ell})$ solution of \eqref{problemtilde}\&\eqref{Griffithtilde}. Then for every $t\in(0,+\infty)$ one has:
	\begin{itemize}
		\item[(a)] $\lim\limits_{n\to+\infty}\ellepsn(t)=\ell(t)$,\\
		\item[(b)] $\epsn\uepsnt(t,\cdot)\to 0$ strongly in $L^2(0,+\infty)$ as $n\to +\infty$,\\
		\item[(c)] $\uepsn(t,\cdot)\to u(t,\cdot)$ strongly in $H^1(0,+\infty)$ as $n\to +\infty$,
	\end{itemize}
 where $(u,\ell)$ is the quasistatic evolution given by Theorem~\ref{exuniqquas} starting from $\ell_1$ and with external loading $w$.\par 
Moreover, if we assume (H3), then we do not need to pass to a subsequence and  the whole sequence $(\ueps,\elleps)$ converges to $(u,\ell)$ in the sense of \textnormal{(a), (b), (c)} for every $t\in(0,+\infty)$ as $\eps\to 0^+$.
\end{thm}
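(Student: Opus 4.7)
The plan is that essentially all the analytic work has already been done in Sections~\ref{sec3} and \ref{sec4}; Theorem~\ref{finalthm} is an assembly of the preceding results. Under assumption (H2) I would first pick the subsequence $\epsn$ given jointly by Remark~\ref{unifwrmk} and by Propositions~\ref{ellconv} and \ref{energydecay}, so that (a) is immediately Proposition~\ref{ellconv}. Next, Corollary~\ref{contelle} (which only requires (K2)) says that the limit debonding front $\ell$ is continuous on $(0,+\infty)$, hence $J_\ell\cap(0,+\infty)$ is empty, and Theorem~\ref{convu} therefore provides (b) and (c) at every $t\in(0,+\infty)$, not just off the jump set.

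Next I would identify $(u,\ell)$ as a quasistatic evolution. Proposition~\ref{stabprop} supplies the stability inequality \eqref{stablimplus} and Proposition~\ref{ebquas} the quasistatic energy-dissipation balance starting from $\ell^+(0)$; combined with the monotonicity of $\ell$ and Proposition~\ref{equiv}, these show that $(u,\ell)$ is an energetic evolution on $[0,+\infty)$ with initial position $\ell^+(0)$ and loading $w$. Condition (KW) is automatic under our hypotheses (as noted at the end of the proof of Proposition~\ref{ebquas}), so Theorem~\ref{exuniqquas} identifies $(u,\ell)$ as the unique quasistatic evolution with those data; the Corollary immediately preceding the Theorem closes the identification by giving $\ell^+(0)=\ell_1$.

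For the final assertion under the stronger hypothesis (H3), I would run a standard subsequence--subsequence argument: since $\weps\wto w$ now holds along the full family and the data $(w,\ell_1)$ are prescribed, the limit $(u,\ell)$ produced above depends only on these data and not on the particular subsequence extracted. Given any sequence $\eps_k\searrow 0$, one can apply the previous two stages to extract a further subsequence along which (a)--(c) hold with the same limit $(u,\ell)$; since this limit is independent of the extraction, the entire family $\eps\to 0^+$ converges in the sense of (a)--(c). I do not anticipate any essential new obstacle here: the delicate points, namely the use of Theorem~\ref{exuniqquas} to pin down $(u,\ell)$ and the identification $\ell^+(0)=\ell_1$ (which jointly exploits the explicit form of $\ell$ and the asymptotic analysis of $\tilde\ell$ at $+\infty$), have already been carried out in Section~\ref{sec4} under assumption (K3), which is what underpins uniqueness in the quasistatic problem and hence the subsequence--subsequence upgrade.
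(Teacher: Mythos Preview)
Your proposal is correct and follows essentially the same assembly as the paper, which presents Theorem~\ref{finalthm} as an immediate consequence of the preceding results (``Putting together all the results obtained up to now we can finally deduce our main Theorem'') without a separate written proof. The only cosmetic difference is that you route the identification of $(u,\ell)$ through Proposition~\ref{equiv}, whereas the paper (inside the proof of Proposition~\ref{ebquas}) has already identified $\ell$ via Proposition~\ref{explicitlambda} and Theorem~\ref{exuniqquas}; both paths are equivalent, and your subsequence--subsequence argument under (H3) is exactly the standard one the paper has in mind.
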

\begin{rmk}
	Under the same assumptions of the above Theorem the convergence of the debonding fronts can be slightly improved by classical arguments. Indeed, since $\elleps$ are nondecreasing continuous functions and since the pointwise limit $\ell$ is continuous in $(0,+\infty)$, we can infer that the convergence stated in (a) is actually uniform on compact sets contained in $(0,+\infty)$.\par
	We want also to recall that for every $T>0$ the convergences in (b) and (c) holds true respectively in $L^2(0,T;L^2(0,+\infty))$ and $L^2(0,T;H^1(0,+\infty))$ too, as we proved in Proposition~\ref{energydecay} under weaker assumptions.
\end{rmk}
\begin{rmk}
	We want to notice that Theorem~\ref{exuniqquas} ensures the limit $\ell$ is an absolutely continuous function, so one could guess that the convergence of the debonding fronts $\elleps$ even occurs in $W^{1,1}_{\textnormal {loc}}(0,+\infty)$, but unfortunately we were not able to prove it. Of course this last conjecture could be true only under the assumptions of Theorem~\ref{finalthm}, otherwise neither the continuity of the function $\ell$ is expected. Our idea to attack the problem was getting good a priori bounds on $\ellepsd$ via the explicit formula \eqref{explicitder}, but we found the task hard due to the high nonlinearity of the formula. Thus better ideas or better strategies are needed to validate or to disprove our conjecture.
\end{rmk}
\section{Conclusions}
In this paper we have proved that dynamic evolutions of a damped debonding model are a good approximation of the quasistatic one when initial velocity and speed of the external loading are very slow with respect to internal vibrations. In light of \cite{LazNar}, in which the failure of this approximation in the undamped case (even with constant toughness) is shown, it is clear that the presence of viscosity, or more generally the presence of some kind of friction, is crucial to get this kind of result. As previously said, the importance of viscosity was already observed in finite dimension and in some damage models.\par 
Although in our work we have been able to cover cases of quite general toughness $\kappa$, we however needed to require some assumptions on it to develop all the arguments. First of all we have always assumed continuity of $\kappa$ and furthermore conditions (K1), (K2) or (K3) have been used to prevent the case of a glue whose toughness oscillates dramatically. It is worth noticing that we did not make use of them until Corollary~\ref{contelle}, thus our previous analysis is suited to deal with wild oscillating (but still continuous) toughnesses too. Going further in the analysis without that assumptions requires a deep understanding of the measure $\mu$ introduced in Proposition~\ref{mu}. This kind of study has been developed in \cite{ScilSol} in finite dimension, but a generalisation to our infinite dimensional setting seemed hard to us. The idea in \cite{ScilSol} relies on the introduction of a suitable cost function which measures the energy gap of a limit solution after a jump in time, and hence characterises their counterpart of measure $\mu$.\par 
It is easy to imagine that without condition (K1) we lose uniqueness and continuity (in time) of quasistatic evolutions, since in that case local but not global minima of the energy can exist. However a more careful analysis on the quasistatic limit could be useful to select and characterise those quasistatic evolutions coming from dynamic ones, and thus somehow more physical.\par 
A more drastic scenario may even appear in the case of a discontinuous toughness, covered however by Theorem~\ref{exuniq}. The failure of the quasistatic approximation in this framework was observed in \cite{DouMarCha08} and \cite{LBDM12} where the authors considered explicit examples of piecewise constant toughness $\kappa$; they noticed that on discontinuity points of $\kappa$ the limit solution does not fulfill Griffith's criterion, which has to be replaced by a suitable energy balance. This is in line with Proposition~\ref{mu} (which however should be proven without assuming continuity of $\kappa$), where the appearance of the measure $\mu$ in \eqref{inequality} takes into account this feature. A similar phenomenon emerges in \cite{Roub} too. As we said before a more complete comprehension of $\mu$ may thus open new perspectives in the understanding of the topic of quasistatic limit.\par 
Finally we want to mention that different kind of frictions may be considered in the dynamic model, replacing the viscous term $u_t(t,x)$ in the wave equation for instance by ${-}u_{txx}(t,x)$ (Kelvin-Voigt model, see \cite{DautLion}, \cite{Slep}) or by a convolution term of the form $\int_{0}^{+\infty}{-}h(\tau)u_{txx}(t-\tau,x)\d\tau$ (viscoelastic materials, see \cite{Conti}, \cite{DautLion}, \cite{Slep}). To our knowledge an analysis of debonding models under the action of these kind of viscoelastic dampings is still missing in literature.\par
We leave all of these questions and proposals open to further research.

	\bigskip
	
	\noindent\textbf{Acknowledgements.}
	The author wishes to thank Prof. Gianni Dal Maso for many helpful discussions on the topic. The author is a member of the Gruppo Nazionale per l'Analisi Matematica, la Probabilit\`a e le loro Applicazioni (GNAMPA) of the Istituto Nazionale di Alta Matematica (INdAM).

	\bigskip
	
	\bibliographystyle{siam}

	{\small
		\vspace{15pt} (Filippo Riva) SISSA, \textsc{Via Bonomea, 265, 34136, Trieste, Italy}
		\par 
		\textit{e-mail address}: \textsf{firiva@sissa.it}
		\par
	}

\end{document}